\title{Balanced metrics on twisted Higgs Bundles}
\author[M. Garcia-Fernandez]{Mario Garcia-Fernandez}
  \address{\'Ecole Polytechnique F\'ed\'eral de Lausanne, SB MATHGEOM, MA B3495 (Batiment MA) Station 8, CH-1015 Lausanne, Switzerland}
  \email{mario.garcia@epfl.ch}
\author[J. Ross]{Julius Ross}
  \address{Department of Pure Mathematics and Mathematical Statistics, University of Cambridge, Wilberforce Road,Cambridge, CB3 0WB, UK}
  \email{j.ross@dpmms.cam.ac.uk}
\theoremstyle{plain}
\newtheorem{theorem}{Theorem}[section]
\newtheorem{lemma}[theorem]{Lemma}
\newtheorem{corollary}[theorem]{Corollary}
\newtheorem{proposition}[theorem]{Proposition}
\theoremstyle{definition}
\newtheorem{definition}[theorem]{Definition}
\newtheorem{definition-theorem}[theorem]{Definition-Theorem}
\newtheorem{remark}[theorem]{Remark}
\theoremstyle{remark}
\newcommand{\secref}[1]{\S\ref{#1}}
\numberwithin{equation}{section} \setcounter{tocdepth}{1}
\newcommand{\vertiii}[1]{{\left\vert\kern-0.25ex\left\vert\kern-0.25ex\left\vert #1 
    \right\vert\kern-0.25ex\right\vert\kern-0.25ex\right\vert}}
\newcommand{\tr}{\operatorname{tr}}
\newcommand{\im}{\operatorname{im}}
\newcommand{\Id}{\operatorname{Id}}
\newcommand{\End}{\operatorname{End}}
\newcommand{\Hom}{\operatorname{Hom}}
\newcommand{\Aut}{\operatorname{Aut}}
\newcommand{\dbar}{\bar{\partial}}
\newcommand{\CC}{{\mathbb C}}
\newcommand{\PP}{{\mathbb P}}
\newcommand{\RR}{{\mathbb R}}
\newcommand{\rk}{\operatorname{rk}}
\renewcommand{\(}{\left(}
\renewcommand{\)}{\right)}
\newcommand{\Vol}{\operatorname{Vol}}
\newcommand{\surj}{\to\kern-1.8ex\to}
\newcommand{\cS}{\mathcal{S}}
\newcommand{\cO}{\mathcal{O}}
\begin{document}

\setlength{\oddsidemargin}{25pt} \setlength{\evensidemargin}{25pt}
\setlength{\textwidth}{400pt} \setlength{\textheight}{650pt}
\setlength{\topmargin}{0pt}

\maketitle

\begin{flushright}
{\small \emph{``My name is Claire Bennet and that was attempt number...I guess I've lost count.'' Claire Bennet, Heroes}}
\end{flushright}

\tableofcontents
\setlength{\parindent}{0pt}

\setlength{\parskip}{2pt}

\section{Introduction}

By a \emph{twisted Higgs bundle} on a K\"ahler manifold $X$ we shall mean a pair $(E,\phi)$ consisting of a holomorphic vector bundle $E$ and a holomorphic bundle morphism
\[\phi\colon M\otimes E \to E\]
for some holomorphic vector bundle $M$. Such objects were first considered by Hitchin \cite{Hitchin} when $X$ is a curve and $M$ is the tangent bundle of $X$, and also by Simpson \cite{Simpson2} for higher dimensional base.

For a choice of positive real constant $c$, there is a Hitchin-Kobayashi correspondence \cite{ACGP, BrGPMR, Hitchin,Simpson1} for such pairs, generalizing the Donaldson--Uhlenbeck--Yau Theorem \cite{D5,UY} for vector bundles. This result states that $(E,\phi)$ is polystable if and only if $E$ admits a hermitian metric $h$ solving the Hitchin equation
\begin{equation}\label{eq:TwistedvortexEq}
i\Lambda F_h + c[\phi,\phi^*] = \lambda \Id,
\end{equation}
where $F_h$ denotes the curvature of the Chern connection of the hermitian metric, $[\phi,\phi^*]= \phi \phi^* - \phi^* \phi$ with  $\phi^*$ denoting the adjoint of $\phi$ taken fibrewise and $\lambda$ is a topological constant.

The aforementioned correspondence is a powerful tool to decide whether there exists a solution of \eqref{eq:TwistedvortexEq}, but it provides little information as to the actual solution $h$.  In this paper we study a quantization of this problem that is expressed in terms of finite dimensional data and ``balanced metrics'' that give approximate solutions to the Hitchin equation.

To discuss details, suppose that $X$ is projective, so carries an ample line bundle $L$ which admits a positive hermitian metric $h_L$ whose curvature is a K\"ahler form $\omega$, and also fix a hermitian metric on $M$. The hypothesis we will make throughout this paper on the vector bundle $M$ is that it is globally generated (we expect that this hypothesis can be removed). Writing $E(k):=E\otimes L^k$, we fix a sequence of positive rationals $\delta=\delta_k=O(k^{n-1})$ and let
$$ \chi = \chi_k=\frac{\dim H^0(E(k))}{\rk_E \Vol(X,L)},$$
which is a topological constant of order $O(k^n)$.  We recall that the sections of $E(k)$ give a natural embedding
$$
\iota \colon X \to \mathbb G:= \mathbb G(H^0(E(k)); \rk_E)
$$
into the Grassmannian of $\rk_E$-dimensional quotients of $H^0(E(k))$.   To capture the Higgs field $\phi$ consider the composition
\[ 
\phi_*=\phi_{*,k}\colon H^0(M) \otimes H^0(E(k)) \to  H^0(M\otimes E(k)) \stackrel{\phi}{\to} H^0(E(k))
\]
where the first map is the natural multiplication.    Notice that $H^0(M)$ is hermitian, since it carries the $L^2$-metric induced by the hermitian metric on $M$ and the volume form determined by $\omega$.  Thus given a metric on $H^0(E(k))$ (by which we mean a metric induced from a hermitian inner product) there is an adjoint 
$$(\phi_*)^*\colon H^0(E(k)) \to H^0(M)\otimes H^0(E(k)).$$  From this we define an endomorphism of $H^0(E(k))$ by
$$ P : = \chi^{-1}\(\Id + \frac{\delta[\phi_*,(\phi_*)^*]}{{1 + \vertiii{\phi_*}^2}}\),$$
where $\vertiii{\phi_*}^2 := \tr \((\phi_*)^*\phi_*\)$  (see \secref{subsec:balanced} for details). Observe that $P$ depends on the choice of metric on $H^0(E(k))$  since the adjoint $(\phi_*)^*$ does.

\begin{definition}
  We say that a metric on $H^0(E(k))$ is \emph{balanced} if for some orthonormal basis $\underline{s}=(s_j)$ we have
\begin{equation}\label{eq:balancedintro2}
\int_X(s_l,s_j)_{{\iota}^*h_{FS}}\frac{\omega^n}{n!}  = P_{jl},
\end{equation}
where $h_{FS}$ denotes the Fubini-Study metric on $\mathbb G$ and $P=(P_{jl})$ in this basis.
\end{definition}

\begin{definition}
A hermitian metric $h$ on $E$ is said to be a \emph{balanced metric} for $(E,\phi)$ at level $k$ if it is the pullback of the induced Fubini-Study metric for some balanced metric on $H^0(E(k))$, i.e.
$$h = h_L^{-k} \otimes \iota^*h_{FS}$$ 
   In this case we refer to the metric on $H^0(E(k))$ as the \emph{corresponding balanced metric}. 
\end{definition}

One verifies easily that if \eqref{eq:balancedintro2} holds for some orthonormal basis then it holds for any orthonormal basis.  In fact, the left hand side of \eqref{eq:balancedintro2} is simply the matrix of the $L^2$-metric induced by $\iota^*h_{FS}$.  Thus when $\phi=0$ this is precisely the standard definition of a balanced metric on $E$ as considered by Wang \cite{W1,W2}.\medskip

The two main results of this paper focus on different aspects of this definition.   First we will show that a balanced metric admits an interpretation as the zero of a moment map.  Thus the existence of such a metric should be thought of as a kind of stability condition, and we show this is the case:

\begin{theorem}\label{thm:giesekerbalanced}
Assume that $M$ is globally generated. A twisted Higgs bundle $(E,\phi)$ is Gieseker-polystable if and only if for all $k$ sufficiently large it carries a balanced metric at level $k$.
\end{theorem}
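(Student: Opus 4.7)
The plan is to realize balanced metrics as zeros of a moment map on a finite-dimensional K\"ahler variety and then invoke Kempf--Ness together with an asymptotic Hilbert--Mumford analysis to match the resulting GIT polystability with Gieseker polystability of $(E,\phi)$. Write $V_k := H^0(E(k))$ and consider the K\"ahler variety
$$\mathcal{Z}_k \;:=\; \mathbb{G}(V_k;\rk_E) \;\times\; \PP\bigl(\Hom(H^0(M)\otimes V_k,\,V_k)\oplus\CC\bigr)$$
with K\"ahler form $\omega_{\mathcal{Z}_k} = \omega_{\mathbb{G}} + \delta\,\omega_{\FS}$. The group $U(V_k)$ acts naturally and admits a moment map $\mu = \mu_{\mathbb{G}} + \delta\,\mu_{\FS}$; a direct computation shows that $\mu_{\FS}$ evaluated at a point of the form $[\psi:1]$ produces precisely the term $[\psi,\psi^*]/(1+\vertiii{\psi}^2)$, while $\mu_{\mathbb{G}}$ evaluated at the embedding $\iota$ reproduces the $L^2$-pairing matrix appearing on the left-hand side of \eqref{eq:balancedintro2}. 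The point $(\iota,\phi_*)\in\mathcal{Z}_k$ therefore satisfies $\mu(\iota,\phi_*)=\chi^{-1}\Id$ if and only if the underlying metric on $V_k$ is balanced.

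Next, linearize the induced $SL(V_k)$-action on an ample line bundle $\mathcal{L}_k$ on $\mathcal{Z}_k$ built as the tensor product of the Pl\"ucker polarization on the Grassmannian factor with an appropriate power of the hyperplane bundle on the projective factor, the powers chosen so that their ratio reproduces $\delta$. By Kempf--Ness, existence of a balanced metric at level $k$ is equivalent to $\mathcal{L}_k$-polystability of the orbit through $(\iota,\phi_*)$. To conclude, one shows that for all $k\gg 0$ this GIT polystability is equivalent to Gieseker polystability of $(E,\phi)$ as a twisted Higgs bundle. Following the arguments of Simpson and Wang, one-parameter subgroups of $SL(V_k)$ with negative Hilbert--Mumford weight come, for large $k$, from weighted filtrations of $E$ by coherent subsheaves via $F\mapsto H^0(F(k))\hookrightarrow V_k$. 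The weight then decomposes as a Grassmannian piece that asymptotically reproduces the Gieseker inequality on Hilbert polynomials, plus a non-negative contribution from $\delta\,\mu_{\FS}$ which vanishes precisely when the filtration is $\phi$-invariant. Polystable orbits correspond to direct-sum decompositions respecting a Jordan--H\"older filtration of $(E,\phi)$.

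The principal obstacle is the asymptotic analysis in the final step: one must quantify uniformly in $k$ how the Higgs contribution $\mu_{\FS}$ enters the Hilbert--Mumford weight so that non-$\phi$-invariant filtrations are strictly penalized at leading order, and handle the normalization $1+\vertiii{\phi_*}^2$ (whose behaviour is controlled by the global generation of $M$ and the asymptotics of the $L^2$-metric on $V_k$). One must also adapt Simpson's truncation of destabilizing filtrations so as to reduce, in the limit $k\to\infty$, to ones coming from $\phi$-invariant subsheaves of $E$. Once these asymptotics are in place, both directions of the theorem follow from the Kempf--Ness correspondence and the large-$k$ identification of GIT and Gieseker polystability.
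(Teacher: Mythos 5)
Your overall strategy (moment map, then Kempf--Ness, then an asymptotic Hilbert--Mumford analysis) is the paper's strategy, and your sketch of the GIT half --- one-parameter subgroups inducing weighted filtrations of $E$ via $F\mapsto H^0(F(k))$, a Grassmannian-type weight reproducing the Gieseker inequality through LePotier--Simpson estimates, and a non-negative Higgs contribution that vanishes on $\phi$-invariant filtrations --- matches Theorems \ref{th:theorem3} and \ref{thm:weakconverse} quite closely. However, there is a genuine gap at the Kempf--Ness step, and it is not the step you flag as the ``principal obstacle''. The balanced condition is the vanishing of a moment map on the \emph{space of embeddings} $f_{\underline{s}}\colon X\to \overline{Z}\times\mathbb G$, because the left-hand side of \eqref{eq:balancedintro2} is an \emph{integral over $X$} of the Grassmannian moment map. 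The finite-dimensional variety $\mathbb G(V_k;\rk_E)\times\PP(\Hom(H^0(M)\otimes V_k,V_k)\oplus\CC)$ you propose does not contain the relevant object: $\iota$ is a map from $X$ into the Grassmannian, not a point of it, and $SL(V_k)$-polystability of a single point of $\mathbb G(V_k;\rk_E)$ carries no information about $E$. So the assertion ``by Kempf--Ness, existence of a balanced metric at level $k$ is equivalent to $\mathcal L_k$-polystability of the orbit through $(\iota,\phi_*)$'' does not follow from the standard finite-dimensional Kempf--Ness theorem, and applied to the space of embeddings Kempf--Ness is not available as stated.

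To close this gap one must replace the Grassmannian factor by the Gieseker point $T_E\in\PP(\Hom_k)$ with $\Hom_k=\Hom(\Lambda^r H^0(E(k)),H^0(\det(E(k))))$, and then prove that, for a suitable norm on $\Hom_k$, the finite-dimensional Kempf--Ness functional of $(\phi_*,T_E)$ coincides with the integral of the moment map on the space of embeddings. This is exactly the content of Lemma \ref{lem:L=L}, which uses the Phong--Sturm norm $\log\|a\|^2=\int_X\log\sum|a^\mu_{i_1,\ldots,i_{\rk_E}}\tau^\mu(x)|^2\,\omega^n$ on $\Hom_k$ together with the potential $\log(1+\|\cdot\|_Z^2)$ for $\omega_{\overline Z}$; only after this identification does Kempf--Ness, applied to the genuinely finite-dimensional point $(\phi_*,T_E)$, convert ``balanced'' into ``polystable'', and convexity of the integral of the moment map identifies its critical points with minima. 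A secondary point: for the forward GIT implication the linearization parameter $\epsilon=\delta/\chi$ must be at least of order $k^{-1}$ (the threshold $\epsilon\geq\epsilon_0 k^{-1}$ in Theorem \ref{th:theorem3}), which is consistent with, but not automatic from, your prescription that the ratio of the two polarizations ``reproduces $\delta$''.
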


Second we investigate how balanced metrics relate to solutions to the Hitchin equation. This turns out to be a much more complicated and interesting problem than for the case $\phi = 0$ \cite{W4}.

\begin{theorem}\label{thm:converge}
Assume that $M$ is globally generated. Suppose $h_k$ is a sequence of hermitian metrics on $E$ which converges (in $C^{\infty}$ say) to $h$ as $k$ tends to infinity. Suppose furthermore that $h_k$ is balanced at level $k$ and that the sequence of corresponding balanced metrics on $H^0(E(k))$ is ``weakly geometric''. Then $h$ is, after a possible conformal change, a solution of Hitchin equations.
\end{theorem}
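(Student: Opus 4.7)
The plan is to translate the balanced condition into a pointwise endomorphism identity on $X$ via the Fubini--Study identity, then apply Bergman kernel asymptotics, following the outline of Wang's proof \cite{W4} for the untwisted case ($\phi=0$) with substantial modifications to handle the Higgs term. Write $V_k=H^0(E(k))$ and let $H_k$ denote the balanced inner product on $V_k$ corresponding to $h_k$.

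\textbf{Step 1 (Pointwise form).} The defining identity $\int_X(s_l,s_j)_{h_k}\,\omega^n/n!=P_{jl}$ for any $H_k$-orthonormal basis $\underline s$ is equivalent to the endomorphism identity $H_k^{-1}\Hilb(h_k)=P(H_k)$ on $V_k$. Passing from $\underline s$ to a $\Hilb(h_k)$-orthonormal basis via the change of basis $P^{-1/2}$ one obtains the Bergman endomorphism
\begin{equation*}
B_k(h_k)(x)=\sum_{l,l'}(P(H_k)^{-1})_{l'l}\,s_l(x)\otimes s_{l'}(x)^{*_{h_k}}.
\end{equation*}
Since $h_k=\iota^\ast h_{FS}$, the Fubini--Study identity yields $\sum_l s_l(x)\otimes s_l(x)^{*_{h_k}}=\Id_{E_x(k)}$; expanding $P^{-1}=\chi_k(\Id-\epsilon+\ldots)$ with $\epsilon=\delta_k(1+\vertiii{\phi_*}^2)^{-1}[\phi_*,(\phi_*)^*]$ produces
\begin{equation*}
B_k(h_k)(x)=\chi_k\,\Id-\chi_k\,\sigma_k(\epsilon)(x)+O\bigl(\chi_k\|\epsilon\|^2\bigr),
\end{equation*}
where $\sigma_k(A)(x):=\sum_{l,l'}A_{l'l}\,s_l(x)\otimes s_{l'}(x)^{*_{h_k}}$ is the symbol map $\End(V_k)\to\End(E_x(k))$.

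\textbf{Step 2 (Asymptotics and the main obstacle).} Apply the standard Bergman expansion
\begin{equation*}
B_k(h_k)=k^n\Id+k^{n-1}\bigl(i\Lambda F_h+\tfrac{S(\omega)}{2}\Id\bigr)+O(k^{n-2}),
\end{equation*}
valid since $h_k\to h$ in $C^\infty$, together with the Riemann--Roch expansion $\chi_k=k^n+\chi_1 k^{n-1}+\cdots$. After cancelling the leading $k^n\Id$, the proof reduces to establishing
\begin{equation*}
\chi_k\,\sigma_k(\epsilon)(x)=c\,k^{n-1}\,[\phi,\phi^{*_h}](x)+o(k^{n-1})
\end{equation*}
for the constant $c$ in \eqref{eq:TwistedvortexEq}. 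This is the principal obstacle. Decomposing $\phi_*=\sum_\alpha m_\alpha^\ast\otimes\phi_{*,\alpha}$ in an $L^2$-orthonormal basis $\{m_\alpha\}$ of $H^0(M)$ reveals each $\phi_{*,\alpha}$ as essentially the Berezin--Toeplitz quantization of the pointwise endomorphism $\phi(m_\alpha,\cdot)\in\Gamma(\End E)$. The weakly geometric hypothesis on $H_k$ is what controls the $H_k$-algebraic adjoint $\phi_{*,\alpha}^{*_{H_k}}$ by the pointwise adjoint $\phi(m_\alpha,\cdot)^{*_{h_k}}$ to top order, so the symbol of the commutator $\sum_\alpha[\phi_{*,\alpha},\phi_{*,\alpha}^{*_{H_k}}]$ reproduces $[\phi,\phi^{*_h}](x)$ in the limit. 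Global generation of $M$ supplies the pointwise completeness of $\{m_\alpha(x)\}$ needed to reassemble $\phi$ from its components in the sum over $\alpha$; a parallel computation confirms $\vertiii{\phi_*}^2=C k^n+o(k^n)$ so that the normalising factor behaves correctly.

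\textbf{Step 3 (Conformal change).} Matching the $k^{n-1}$ coefficients then yields, pointwise on $X$,
\begin{equation*}
i\Lambda F_h+\tfrac{S(\omega)}{2}\Id+c[\phi,\phi^{*_h}]=\lambda\,\Id
\end{equation*}
for some constant $\lambda$. Since a conformal change $h\mapsto e^f h$ preserves $[\phi,\phi^*]$ and shifts $i\Lambda F_h$ by $(\Delta f)\Id$, solving $\Delta f=\lambda'-S(\omega)/2$ on the compact K\"ahler manifold $X$ (which has unique solution up to a constant once the integral obstruction is absorbed in $\lambda'$) converts the above into the Hitchin equation \eqref{eq:TwistedvortexEq}, as required.
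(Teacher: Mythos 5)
Your overall skeleton (balanced identity $\sum_l (Ps_l')(\cdot,s_l')_{H_k}=\Id$ $\to$ Bergman expansion $\to$ match the $k^{n-1}$ coefficients $\to$ conformal change to absorb $S_\omega$) is the same as the paper's, and Steps 1 and 3 are essentially correct. The problem is that Step 2 --- which you rightly call the principal obstacle --- is resolved by assertion rather than by argument, and the assertion attributes the key control to the wrong hypothesis. The weakly geometric hypothesis is a norm bound on $\phi_*$ (operator norm $O(1)$, Frobenius norm bounded below by $c'k^n/\rk_E$); its role is only to make $\epsilon_k=\delta k(1+\vertiii{\phi_*}^2)^{-1}$ of order $O(k^0)$ and to feed the a priori bound $\|\chi P-\Id\|=O(k^{-1})$. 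It does \emph{not} control the relation between the algebraic adjoint $(\phi_{*})^{*}$ and the pointwise adjoint $\phi^{*_{h}}$. The actual mechanism is: (a) the $L^2(H_k)$-adjoint of $\phi_{*,\alpha}$ equals $\Pi_k\circ\phi(m_\alpha,\cdot)^{*_{H_k}}$, i.e.\ the Toeplitz operator with the non-holomorphic symbol given by the fibrewise adjoint; and (b) one must then remove the projection $\Pi_k$, which the paper does via the H\"ormander estimate $\|\phi^{*_k}s-\Pi_k(\phi^{*_k}s)\|^2_{H_k}\le \tfrac{C}{k}\|\bar\partial(\phi^{*_k})s\|^2_{H_k}$, producing the $o(k^{n-1})$ error you need. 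Your plan never invokes any $\bar\partial$-estimate or Toeplitz composition theorem, so the claim that the symbol of $\sum_\alpha[\phi_{*,\alpha},\phi_{*,\alpha}^{*_{H_k}}]$ reproduces $[\phi,\phi^{*_h}]$ is exactly the unproven point (and it is the point the paper's introduction flags as the new difficulty relative to $\phi=0$).

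Two further, smaller gaps. First, the adjoint appearing in $P$ is taken with respect to the \emph{balanced} inner product on $H^0(E(k))$, while in Step 2 you silently switch to the $L^2(H_k)$-adjoint; these differ by conjugation by $\chi P=\Id+O(k^{-1})$ (via $\alpha^*=P\beta P^{-1}$), and one must check the discrepancy enters only at order $k^{n-2}$ --- the paper handles this with its asymptotic expansion of $P$ (Theorem \ref{thm:asymptoticexpansionD}), and some version of that comparison must appear in your argument. Second, the assertion that a ``parallel computation confirms $\vertiii{\phi_*}^2=Ck^n+o(k^n)$'' does not follow from weak geometricity, which only yields two-sided bounds of order $k^n$; consequently $\epsilon_k$ is bounded but need not converge a priori. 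The fix is either to pass to a convergent subsequence or, as in the paper, to note that $\epsilon_k$ must converge because every other term in the limiting identity does, and to read off $c=\lim_k\epsilon_k$ from the equation itself.
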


By the weakly geometric hypothesis we mean that the operator norm of $\phi_*$ is uniformly bounded over $k$, and its Frobenius norm is strictly $O(k^n)$.  This assumption is quite natural for as long as $\phi\neq 0$ it holds, for instance, if this sequence of metrics is ``geometric'' by which we mean it is the $L^2$-metric induced by some hermitian metric on $E$.


\subsection{Proofs and techniques: } There are three main parts of the proof of Theorem \ref{thm:giesekerbalanced}. In the first part we identify a complex parameter space for twisted Higgs bundles (\secref{sec:balancedmetricgeneral}) carrying a positive symplectic structure  and a moment map that matches the balanced condition. In the second part we extend a classical result of Gieseker \cite{Gieseker} to characterize stability of twisted Higgs bundles in terms of Geometric Invariant Theory (Theorems \ref{th:theorem3} and \ref{thm:weakconverse}). The proof is then completed by an adaptation of Phong-Sturm's refinement \cite{PhSt} of Wang's result in the case $\phi = 0$ \cite{W2}. The positivity of the symplectic structure and the linearization (used in the GIT result) turn out to be the main obstacles to undertake our construction for general $M$.

The proof of Theorem \ref{thm:converge} starts with the observation that the balanced condition, which appears as a condition involving finite-dimensional matrix groups, interacts with the K\"ahler geometry of $X$ via the identity
\begin{equation}\label{eq:balancedintro3}
\sum_j (Ps'_j)(\cdot,s'_j)_{H_k} = \Id,
\end{equation}
where the $s_j'$ form an orthonormal basis for the $L^2$-metric induced by $H_k = h_k \otimes h_L^k$. Using the weakly geometric hypothesis, we are able to prove in Theorem \ref{thm:asymptoticexpansionD} an asymptotic expansion for the endomorphism $P$ around $\chi^{-1}\Id$, which relates the left hand side endomorphim to the Bergman function
$$
B_k = \sum_j s'_j(\cdot,s'_j)_{H_k}.
$$
Equation \eqref{eq:balancedintro3} combined with the Hormander estimate implies then the asymptotic condition (in $L^2$-norm)
\begin{equation}\label{eq:balancedintro4}
B_k + c k^{n-1} [\phi,\phi^*] = \chi \Id + O(k^{n-2}).
\end{equation}
With this at hand, the key tool for the proof of Theorem \ref{thm:converge} is the asymptotic expansion of the Bergman kernel \cite{C1,F1,MM1,T1,Y1,Z1}, which says that
\begin{equation}\label{eq:Bergmanexpansion}
k^{-n}B_k = \Id + \frac{1}{k}\left( \Lambda F_h + \frac{S_\omega}{2}\Id\right) + O\left(\frac{1}{k^2}\right)
\end{equation}
where $S_{\omega}$ is the scalar curvature of $\omega$.   For vector bundles without a Higgs field, Theorem \ref{thm:converge} follows almost immediately from this expansion (as observed by Donaldson).  With the introduction of the Higgs field the proof is much more involved, essentially for the following reason: given a holomorphic map $\phi\colon E\to F$ between hermitian vector bundles, no information is lost when one considers instead the pushforward $\phi_*\colon H^0(E(k)) \to H^0(F(k))$ for $k$ sufficiently large.  However the adjoint $\phi^*\colon F\to E$ is not holomorphic, and so one cannot do the same thing (at least not with the space of holomorphic sections).   The natural object to consider instead is the adjoint of $(\phi_*)^*\colon H^0(F(k))\to H^0(E(k))$ taken with respect to induced $L^2$-metrics, and we shall prove that that this adjoint captures all the information that we need.    Thus we have a method for quantizing the adjoint of a holomorphic bundle morphism, which is a tool that we hope will be of use elsewhere.

For vector bundles without the Higgs field $\phi$,  a pertubation argument of Donaldson \cite{D1} gives the converse to Theorem \ref{thm:converge}.  We expect the same argument can be applied to non-zero $\phi$ and to show that a solution to the Hitchin equation gives a sequence of balanced metrics that is weakly geometric (and plan to take this up in a sequel). 

\subsection{Comparison with Other Work: } Our motivation for this study comes from work of Donagi--Wijnholt \cite[\S 3.3]{DonWi} concerning balanced metrics for twisted Higgs bundles on surfaces with $M = K_X^{-1}$, which in turn was motivated by physical quantities whose calculation depended on detailed knowledge of the solutions of the Hitchin equations. In this case the equations go under the name of \emph{Vafa-Witten equations} and are particularly interesting \cite{Ha,Wi}, arising directly from the study of supersymmetric gauge theories in four dimensions \cite{VW}. In the work \cite{DonWi} the authors consider the equation 
\begin{equation}\label{eq:balancedintro4again}
B_k + c k^{n-1} [\phi,\phi^*] = \chi \Id + O(k^{n-2}).
\end{equation}
as the defining condition for the balanced metrics.
This equation, however,  was to be taken ``pro forma''  rather than as part of any general framework.    We will see that our definition of balanced agrees (and refines) that of Donagi--Wijnholt, and thus puts this work into the theory of moment-maps.   
We stress that the work here can only be applied to the Vafa--Witten equations if $K_X^{-1}$ is globally generated (which obviously holds on Calabi-Yau manifolds for instance) but expect that it is possible to relax this hypothesis.   Another interesting arena for the application of our results is the theory of co-Higgs bundles \cite{Rayan,Rayan2},in which $M = TX^*$, allows further interesting examples where the globally generated assumption is satisfied.

A related notion of balanced metric was introduced by J. Keller in \cite{Keller}, for suitable quiver sheaves arising from dimensional reduction considered in \cite{ACGP2}, but as pointed out in \cite{ACGP} this does not allow twisting in the endomorphism and thus does not apply to twisted Higgs bundles. We remark also that our definition differs from that of L. Wang for which the analogue of Theorem \ref{thm:giesekerbalanced} was missing \cite[Remark p.31]{WangLi}. We will also discuss in \secref{sec:generaliz} further possible extensions,  at which point the precise relationship between these different notions becomes clearer.   

By being finite dimensional approximations to solutions to the Hitchin equations (or to the Hermitian-Yang-Mills equation in the case $\phi=0)$, balanced metrics are amenable to numerical techniques.  We expect that a version of Donaldson's approximation theorem \cite{D2} should hold in this setting.   If this is the case then Donaldson's iterative techniques can reasonably be applied in the setting of twisted Higgs bundles (as proposed by Donagi-Wijnholt).  In particular one should be able to use this to approximate the Weyl-Peterson metrics on the moduli space of Higgs bundles and vortices by adapting the ideas in \cite{KellerLukic}, but none of this will be considered further in this paper.

\subsection{Organization: }

We start in \secref{sec:balancedmetricgeneral} with a discussion of the parameter space for twisted Higgs bundles that we will use, and give the details of the definition of a balanced metric.  We then show that the existence of a balanced metric has an interpretation as a zero of a moment map on this parameter space.  We then discuss in \secref{sec:GIT} the stability of a twisted Higgs bundle and its connection with Geometric Invariant Theory.  In \secref{sec:balancedstabmerged} we give a direct proof of the necessity of stability for the existence of a balanced metric, which is in fact simpler than existing proofs even in the case of vector bundles, and then give the proof of the first Theorem. 
Finally, in \secref{sec:asymptoticP} and \secref{sec:limits} we take up the relationship between the balanced condition and the Hitchin equation.

{\bf Acknowledgements: }  We wish to thank Bo Berndtsson, Julien Keller and Luis \'Alvarez-C\'onsul and Martijn Wijnholt for helpful comments and discussions.  During this project JR has been supported by an EPSRC Career Acceleration Fellowship and MGF by the \'Ecole Polytechnique F\'ed\'eral de Lausanne, the Hausdorff Research Institute for Mathematics (Bonn) and the Centre for Quantum Geometry of Moduli Spaces (Aarhus).

\section{Balanced metrics}\label{sec:balancedmetricgeneral}

\subsection{A Parameter Space for twisted Higgs bundles}\label{sec:parameter}

Let $X$ be a smooth projective manifold and $L$ an ample line bundle on $X$.  Suppose also that $M$ is a fixed holomorphic vector bundle on $X$.  The following objects were introduced in \cite{BiswasRamanan,Nitsure}.

\begin{definition}
  A \emph{twisted Higgs bundle} $(E,\phi)$ consists of a holomorphic vector bundle $E$ and a holomorphic bundle morphism
$$\phi \colon M\otimes  E \to E.$$
\end{definition}

Twisted Higgs bundles also go under the name of \emph{Hitchin pairs}.  A morphism between twisted Higgs bundles $(E_1,\phi_1)$ and $(E_2,\phi_2)$ is a bundle morphism $\alpha\colon E_1\to  E_2$ such that $\alpha\circ \phi_1 = \phi_2\circ (id_M\otimes \alpha)$ (note the bundle $M$ is the same for both pairs), and  this defines what it means for two twisted Higgs bundles to be isomorphic.   The automorphism group of $(E,\phi)$ will be denoted $\Aut(E,\phi)$, and $(E,\phi)$ is said be \emph{simple} if $\Aut(E,\phi)=\mathbb C$.\medskip

  We let $E(k) = E\otimes L^k$, and denote the Hilbert polynomial by
\[P_E(k) =\chi (E(k)) = \rk_E k^n \int_X \frac{c_1(L)^n}{n!} + O(k^{n-1})\]
where $\rk_E$ is the rank of $E$.

\begin{definition}
  We say that $(E,\phi)$ is \emph{Gieseker-(semi)stable} if for any proper subsheaf $F \subset E$ such that $\phi(M\otimes F) \subset F$ we have
$$
\frac{P_{F}(k)}{\rk_{F}} \; (\le) \; \frac{P_{E}(k)}{\rk_{E}} \quad \text{ for all } k\gg 0.
$$
We say $(E,\phi)$ is \emph{Gieseker-polystable} if $E=\bigoplus_i E_i$ and $\phi = \oplus \phi_i$ where $(E_i,\phi_i)$ is Gieseker-stable and $P_{E_i}/\rk_{E_i} = P_{E}/\rk_E$ for all $i$ \cite{Schmitt1}.
\end{definition}

Hence this is the usual definition for Gieseker stability only restricting to subsheaves invariant under $\phi$.  Similarly one can define Mumford-(semi)stability by replacing the polynomials $P_E/\rk_E$ with the slopes $\deg(E)/\rk_E$.  Then the usual implications \cite[1.2.13]{Huybrechts1} between Mumford and Gieseker (semi)stability hold, and the Hitchin-Kobayashi correspondence for twisted Higgs bundles (see e.g. \cite{ACGP}) 
is to be taken in the sense of Mumford-polystability.

There are a number of ways that one can parameterise decorated vector bundles \cite{Schmitt2}.  Since we will assume throughout that $M$ is globally generated, we can work with the following rather simple setup.

\begin{definition}\label{def:Z}
  Given a vector space $U$ we let
$$ Z:=Z(U) := \Hom(H^0(M)\otimes U,U)$$
and
$$ \overline{Z} := \mathbb P(Z\oplus \mathbb C).$$
\end{definition}

\begin{definition}
  Let $\phi_* = \phi_{*,k}$ be the linear map defined by
\[ \phi_*\colon H^0(M) \otimes H^0(E(k)) \to  H^0(M\otimes E(k)) \stackrel{\phi}{\to} H^0(E(k))\]
where the first map is the natural multiplication (in the following we will omit this multiplication map from the notation where it cannot cause confusion).  Thus $\phi_*\in Z(H^0(E(k))$ which we identify also with $[\phi_*,1]\in \overline{Z}$. 
\end{definition}

To put this into the context we wish to use, suppose we have a twisted Higgs bundle $(E,\phi)$ and an isomorphism $H^0(E(k))\simeq \mathbb C^{N}$ given by a basis $\underline{s}$ for $H^0(E(k))$.   Then under this isomorphism $\phi_*\in Z:= Z(\mathbb C^{N_k})$ and the sections of $E$ give an embedding
\[\iota_{\underline{s}}\colon X\to \mathbb G\]
where $\mathbb G$ 
denotes the Grassmannian of $\rk_E$ dimensional quotients of $\mathbb C^{N_k}$.
  
\begin{definition}\label{def:embeddingmorphism}
Define the embedding
$$ f= f_{\underline{s}} \colon X\to \overline{Z}\times \mathbb G \quad\text{by } f(x) = (\phi_{*},\iota_{\underline{s}}(x)).$$  
\end{definition}
The group $GL_N$ acts on the right hand side in a natural way, reflecting the different choices of $\underline{s}$,   and one can easily check that pairs $(\phi,E)$ and $(\tilde{\phi},\tilde{E})$ are isomorphic if and only if the associated embeddings (for any choices of basis) lie in the same $GL_N$ orbit.

\subsection{Balanced Metrics}\label{subsec:balanced}

Fix a hermitian metric $h_M$ on $M$ and positive hermitian metric $h_L$ with curvature $\omega$.  These induce an $L^2$-metric on the space $H^0(M)$ by
$$ \|s\|_{L^2}^2 := \int_X |s|_{h_M}^2 \frac{\omega^n}{n!}.$$
Also fix the topological constant 
\begin{equation}\label{eq:valuechi}
 \chi := \chi(k) = \frac{h^0(E(k))}{\rk_E \Vol(X)},
\end{equation}
with $\Vol(X) := \frac{1}{n!}\int_X c_1(L)^n$, so by Riemann-Roch
$$ \chi = k^n(1 + O(1/k)).$$
We also fix a $\delta=\delta(k)>0$ depending on a positive integer $k$ (in the application we have in mind $\delta=\ell k^{n-1}$ for some chosen constant $\ell >0$).

Now suppose we choose a hermitian inner product on $H^0(E(k))$. Then the domain and target of
\begin{equation}\label{diagram4}\begin{CD}
H^0(M)\otimes H^0(E(k))@> \phi_*>>   H^0(E(k)).
\end{CD}\end{equation}
are hermitian (induced by this chosen inner product and the fixed $L^2$-metric on $H^0(M)$). Define
$$
\vertiii{\phi_*}^2 := \tr \((\phi_*)^*\phi_*\),
$$
where $(\phi_*)^*$ denotes the adjoint of $\phi_*$.

\begin{definition}
Set
\begin{equation}\label{eq:bracketsharp}
[\phi_*,(\phi_*)^*] = \phi_*(\phi_*)^* -  (\phi_*)^*\phi_*.
\end{equation}
and define an endomorphism $P$ of $H^0(E(k))$ by 
\begin{equation}\label{eq:Pdef}
P := \chi^{-1}\(\Id + \frac{\delta[\phi_*,(\phi_*)^*]}{{1 + \vertiii{\phi_*}^2}}\).
\end{equation}
\end{definition}

\begin{remark}
Here and below we use the following abuse of notation.   By a metric on a vector space we shall always mean one that arises from a hermitian inner product.  If $U, V$ have given metrics and $f\colon U\otimes V\to U$ is a linear map we will denote the induced map $U\to U\otimes V^*$ also by $f$.  So the adjoint $f^*$ can be thought of either as a map $U\to U\otimes V$ or as a map $U\otimes V^*\to U$.  Thus the commutator $[f,f^*] = ff^*-f^*f$ is a well-defined map $U\to U$.
\end{remark}


\begin{definition}\label{def:balanced}
We say that a metric on $H^0(E(k))$ is \emph{balanced} if for an orthonormal basis $\underline{s}$ the embedding  $\iota_{\underline{s}}$ and quantized Higgs field $\phi_*$ satisfy
\begin{equation}\label{eq:balancedintro}
\int_X(s_j,s_l)_{\iota_{\underline{s}}^*h_{FS}}\frac{\omega^n}{n!}  = P_{lj} \in i\mathfrak{u}(N),
\end{equation}
where $h_{FS}$ denotes the Fubini-Study metric on the universal quotient bundle over $\mathbb G$ and $P_{lj}$ are the components of $P$ in this basis.     A hermitian metric $h$ on $E$ is a \emph{balanced metric} for $(E,\phi)$ at level $k$ if 
$$h = h_L^{\otimes (-k)} \otimes \iota_{\underline{s}}^*h_{FS}$$
where $h_{FS}$ is the Fubini-Study metric coming from a balanced metric on $H^0(E(k))$.  If such a metric $h$ exists then we say $(E,\phi)$ is \emph{balanced at level $k$} and refer to the balanced metric on $H^0(E(k))$ as the  \emph{corresponding balanced metric}. 
\end{definition}

\subsection{Balanced metrics as zeros of a moment map}\label{subsec:mmap}

We next interpret balanced metrics in terms of a moment map.  Take $U = \CC^{N_k}$ and $\overline{Z}$ as in Definition \ref{def:Z}.  


\begin{definition}
We let
$$\cS \subset C^\infty(X,\overline Z\times \mathbb G)$$
denote the space of embeddings $f_{\underline{s}}\colon X\to \overline{Z}\times \mathbb G$, for different
choice of basis $\underline{s}$. We define a form on $\cS$ by
\begin{equation}\label{eq:Omegap}
\Omega|_f(V_1,V_2) = \int_X V_2\lrcorner\left(V_1 \lrcorner\left(\omega_G + \frac{\delta}{\chi\Vol(X)}\omega_{\overline Z}\right)\right)\wedge \frac{\omega^n}{n!},
\end{equation}
where $V_j \in T_f\cS \cong H^0(X,f^*T (\overline Z\times \mathbb G))$ and $\omega_{\overline{Z}}$ and $\omega_G$ denote the Fubini-Study metrics on $\overline{Z}$ and $\mathbb G$. 
\end{definition}

\begin{lemma}\label{lem:mup}
The form $\Omega$ is closed, positive and $U(N)$-invariant. There exists a moment map for the $U(N)$-action on $(\cS,\Omega)$, given by
\begin{equation}\label{eq:Omegasigmammap}
\mu(f_{\underline{s}}) = -\frac{i}{2}\int_X(s_j,s_l)_{f_{\underline{s}}^*h_{FS}}\frac{\omega^n}{n!} + \frac{i\delta}{2\chi}\(\frac{[\phi_*,(\phi_*)^*]}{1 + \vertiii{\phi_*}^2}\)_{lj} \in \mathfrak{u}(N),
\end{equation}
where $h_{FS}$ denotes the Fubini-Study metric on $\mathbb G$.
\end{lemma}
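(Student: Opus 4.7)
The plan is to reduce both parts of the lemma to two standard ingredients: the Fubini-Study Kähler structures on $\overline{Z}$ and $\mathbb{G}$ together with their explicit $U(N)$-moment maps, and the general principle that integration over $X$ against $\omega^n/n!$ converts a moment map on a target $Y$ into a moment map on a space of maps $X\to Y$.

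For the properties of $\Omega$, the forms $\omega_G$ and $\omega_{\overline Z}$ are Kähler forms on $\mathbb{G}(U;\rk_E)$ and on $\overline{Z}=\mathbb{P}(Z\oplus\mathbb{C})$ respectively, hence closed and positive, and each is $U(N)$-invariant (the standard action on $U=\mathbb{C}^N$ for $\mathbb{G}$; for $\overline Z$ the representation $g\cdot\phi = g\phi g^{-1}$ on $Z=\Hom(H^0(M)\otimes U, U)$ together with the trivial action on $\mathbb{C}$). Since $\delta/(\chi\Vol(X))>0$, their positive linear combination inherits these three properties, and all three propagate to $\Omega$ after pullback by $f$ and integration against $\omega^n/n!$: closedness because exterior differentiation on $\cS$ commutes with integration over the closed manifold $X$; positivity because the integrand at each $x$ is pulled back from a positive $(1,1)$-form; and $U(N)$-invariance because the group acts only through the target factors.

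To identify the moment map I would compute the $U(N)$-moment map on each factor separately. For $(\mathbb{G},\omega_G)$, the moment map at a point represented by a quotient $q\colon U\to Q$ is, up to the universal factor $-i/2$ coming from the Fubini-Study normalization, the Hermitian matrix with entries $(e_j,e_l)_{h_{FS}}$, where $(e_j)$ is the standard basis of $U$ and $h_{FS}$ is the induced metric on $Q$. For $(\overline{Z},\omega_{\overline Z})$, the infinitesimal $U(N)$-action on $Z$ is $A\cdot\phi=[A,\phi]$, so applying the standard formula for the moment map of a unitary representation on projective space at the point $[\phi_*:1]$ (of squared norm $1+\vertiii{\phi_*}^2$), combined with the cyclicity identity $\langle [A,\phi_*],\phi_*\rangle_Z = \tr(A[\phi_*,(\phi_*)^*])$ used to rewrite the numerator, yields
\[
\mu_{\overline Z}([\phi_*:1]) \;=\; \frac{i}{2}\cdot \frac{[\phi_*,(\phi_*)^*]}{1 + \vertiii{\phi_*}^2}.
\]

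Finally, I would combine these using the general principle that if $\mu_Y$ is a moment map for a $U(N)$-action on $(Y,\omega_Y)$, then on a space of maps $X\to Y$ the form $\Omega_Y(V_1,V_2):=\int_X\omega_Y(V_1,V_2)\,\omega^n/n!$ admits $f\mapsto\int_X(\mu_Y\circ f)\,\omega^n/n!$ as a moment map; this is immediate from $d\mu_Y(A)=\iota_{X_A}\omega_Y$ together with the fact that $d$ commutes with fibre integration over the closed $X$. Applied with $Y=\overline Z\times\mathbb{G}$ and $\omega_Y=\omega_G+(\delta/(\chi\Vol(X)))\omega_{\overline Z}$, and noting that the $\overline Z$-component of $f_{\underline s}$ is the constant $[\phi_*:1]$ so its contribution integrates to $\Vol(X)\cdot\mu_{\overline Z}$, the prefactor $\delta/(\chi\Vol(X))$ cancels the $\Vol(X)$ and leaves $\delta/\chi$, exactly reproducing the two summands in the formula for $\mu(f_{\underline s})$. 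The main obstacle is not conceptual but the bookkeeping of signs and normalizations (the Fubini-Study convention, the identification of $\mathfrak{u}(N)^*$ with $\mathfrak{u}(N)$, and the orientation of the commutator); all of these can be pinned down by testing $d\mu(A)=\iota_{X_A}\Omega$ on a single tangent vector, after which the remainder is routine.
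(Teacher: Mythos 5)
Your proposal is correct and follows essentially the same route as the paper: the properties of $\Omega$ are inherited from the Fubini--Study forms on the two factors, and the moment map is obtained by integrating the pullback of $\mu_G + \frac{\delta}{\chi\Vol(X)}\mu_{\overline Z}$ over $X$, with $\mu_{\overline Z}$ constant along $X$ so that the $\Vol(X)$ cancels. Your explicit computation of $\mu_{\overline Z}$ at $[\phi_*:1]$ via the cyclicity identity $\langle [A,\phi_*],\phi_*\rangle_Z=\tr(A[\phi_*,(\phi_*)^*])$ is exactly the step the paper leaves implicit, and your $\mathbb G$-factor formula agrees with the paper's $\langle\mu_G(A),\zeta\rangle=-\tfrac{i}{2}\tr A^*(AA^*)^{-1}A\zeta$.
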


\begin{proof}
The first part follows from the closedness, positivity and invariance of $\omega_{\overline{Z}}$ and $\omega_G$ (see \cite[Remark 3.3]{W2} and cf. \cite[Remark 2.3]{GFR}).

Now let $\mu_G\colon \mathbb G \to \mathfrak{u}(N)^*$ and $\mu_{\overline Z}\colon Z \to \mathfrak{u}(N)^*$ be the moment maps for the $U(N)$-action on $\mathbb G$ and $\overline Z$ respectively.  Then
$$
\mu(f_{\underline{s}}) = \int_X f_{\underline{s}}^*\left(\mu_G + \frac{\delta}{\chi\Vol(X)}\mu_{\overline Z}\right)\frac{\omega^n}{n!},
$$
is the map we require. Now, by definition of the action
\begin{align*}
\langle\mu_G(A),\zeta\rangle &= -\frac{i}{2}\tr A^*(AA^*)^{-1}A\zeta
\end{align*}
for every $\zeta \in \mathfrak{u}(N)$, where we think of a point in $\mathbb G$ as an $\rk_E\times N$ matrix $A$. We observe that
$$
\(\int_Xf_{\underline{s}}^*(A^*(AA^*)^{-1}A)\omega^n\)_{lj} = \int_X(s_j,s_l)_{f_{\underline{s}}^*h_{FS}}\omega^n,
$$
and that $\mu_{\overline Z}$ is constant on $X$, which proves the statement. 
\end{proof}

\begin{corollary}\label{cor:balancedmmap}
A twisted Higgs bundle $(E,\phi)$ is balanced at level $k$ if and only if there exists a basis $\underline{s}$ of $H^0(E(k))$ such that $f_{\underline{s}}$ is a solution of the moment map equation
$$
\mu(f_{\underline{s}}) = -\frac{i\chi}{2} \Id.
$$
\end{corollary}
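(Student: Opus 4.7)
The corollary is a direct algebraic consequence of Lemma \ref{lem:mup} together with the definition of $P$ in \eqref{eq:Pdef}, so the plan is simply to unpack both sides and check that they coincide.

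First I would fix a basis $\underline{s}$ of $H^0(E(k))$, use it to identify $H^0(E(k))\cong \CC^{N}$, and equip this with the standard Hermitian structure (equivalently, declare $\underline{s}$ to be orthonormal). This single piece of data is precisely what is needed both to form the embedding $f_{\underline{s}}\colon X\to \overline Z\times \mathbb G$ from Definition \ref{def:embeddingmorphism} and, via the induced adjoint $(\phi_{*})^{*}$ and the norm $\vertiii{\phi_{*}}^{2}$, to form the endomorphism $P$ of \eqref{eq:Pdef}. Note that conjugating $\underline{s}$ by an element of $U(N)$ preserves orthonormality and merely conjugates $\mu(f_{\underline{s}})$, so requiring $\mu(f_{\underline{s}})$ to equal the central element $-\tfrac{i\chi}{2}\Id$ is independent of this residual ambiguity.

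Next I would substitute the explicit formula \eqref{eq:Omegasigmammap} into the equation $\mu(f_{\underline{s}})=-\tfrac{i\chi}{2}\Id$ and rearrange. After clearing the overall factor $-i/2$ and moving the commutator term across, the right hand side reassembles, using the definition \eqref{eq:Pdef}, into the matrix $P$ written in the basis $\underline{s}$. The outcome is the matrix identity
\[
\int_X (s_{j},s_{l})_{f_{\underline{s}}^{*}h_{FS}}\,\frac{\omega^{n}}{n!} \; = \; P_{lj},
\]
which is precisely the balanced condition \eqref{eq:balancedintro} of Definition \ref{def:balanced}. Reading this chain of equivalences in reverse yields the converse implication: starting from a balanced metric with orthonormal basis $\underline{s}$ and substituting back into \eqref{eq:Omegasigmammap}, one recovers $\mu(f_{\underline{s}})=-\tfrac{i\chi}{2}\Id$.

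I do not expect any real obstacle here. The entire content of the corollary has already been packaged into Lemma \ref{lem:mup} and the construction of $P$; what remains is pure bookkeeping. The only points that need attention are that the normalization constants appearing in \eqref{eq:Pdef} must be lined up precisely with those in \eqref{eq:Omegasigmammap}, and that the Hermitian structure on $H^{0}(E(k))$ implicitly used to form $(\phi_{*})^{*}$ inside the moment map formula is the same one making $\underline{s}$ orthonormal, so that the commutator computed in the moment map is the same one appearing in the definition of $P$.
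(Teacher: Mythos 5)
Your proposal is correct and is essentially the paper's own proof, which consists of the single observation that the moment map equation, after substituting the explicit formula \eqref{eq:Omegasigmammap}, is a restatement of Definition \ref{def:balanced}. One remark worth recording: taking traces (the commutator term is traceless and $\sum_j\int_X|s_j|^2_{f_{\underline{s}}^*h_{FS}}\,\omega^n/n!=\rk_E\Vol(X)$) shows that the constant on the right-hand side of the corollary must be $-\tfrac{i}{2\chi}$ rather than $-\tfrac{i\chi}{2}$ in order for the rearrangement to reproduce $P=\chi^{-1}(\Id+\cdots)$ from \eqref{eq:Pdef} exactly; this is a normalization typo in the statement as printed, not a gap in your argument, and you correctly identify the alignment of these constants as the only point requiring care.
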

\begin{proof}
This is precisely the definition of the balanced condition.
\end{proof}

\subsection{A further characterization of the balanced condition}
In addition to the moment map interpretation of the balanced condition, we have the following  characterization in terms of metrics on $E$ and $H^0(E(k))$.

\begin{proposition}\label{prop:balancedchar}
$(E,\phi)$ is balanced at level $k$ if and only if there exists a pair $(h,(\cdot,\cdot))$ consisting of a hermitian metric $h$ on $E$ and hermitian inner product $(\cdot,\cdot)$ on $H^0(E(k))$ such that if $P$ is the operator defined by $(\cdot,\cdot)_{H_k} = (P\cdot,\cdot)$
with $(\cdot,\cdot)_{H_k}$ denoting the $L^2$-metric induced by to $H_k = h \otimes h_L^k$ then
\begin{equation}\label{eq:balancedchar}
\begin{split}
\Id & = \sum_j (Ps'_j)(\cdot,s'_j)_{H_k} \text{ and},\\
P & = \chi^{-1}\(\Id + \frac{\delta}{1 + \vertiii{\phi_*}^2}[\phi_*,(\phi_*)^*]\).
\end{split}
\end{equation}
Here $(s_j')$ is an orthonormal basis for $(\cdot,\cdot)_{H_k}$ and the adjoint $(\phi_*)^*$ and Frobenius norm $\vertiii{\phi_*}^2$ are taken with respect to $(\cdot,\cdot)$.
\end{proposition}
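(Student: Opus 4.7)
The plan is to unwind each of the two conditions in the proposition and relate each to a distinct piece of the balanced condition. The key tool is the standard pointwise characterization of the Fubini-Study metric: given an inner product $(\cdot,\cdot)$ on $H^0(E(k))$, the pullback metric $\iota_{\underline{s}}^*h_{FS}$ on $E(k)$ is the quotient inner product coming from the evaluation map $\mathrm{ev}_x\colon H^0(E(k))\to E(k)_x$, and so depends only on $(\cdot,\cdot)$ and not on the particular $(\cdot,\cdot)$-orthonormal basis $\underline{s}$. Writing $FS(\cdot,\cdot)$ for this metric, a fibrewise hermitian metric $H_k$ on $E(k)$ equals $FS(\cdot,\cdot)$ if and only if
\[
\mathrm{ev}_x\circ\mathrm{ev}_x^*=\Id_{E(k)_x}\quad\text{for all }x\in X,
\]
where $\mathrm{ev}_x^*$ is the adjoint of $\mathrm{ev}_x$ taken with respect to $(\cdot,\cdot)$ on $H^0(E(k))$ and $H_k(x)$ on $E(k)_x$.

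First I would show that the first equation of \eqref{eq:balancedchar}, namely $\Id=\sum_j(Ps_j')(\cdot,s_j')_{H_k}$, is equivalent to $H_k=FS(\cdot,\cdot)$, and hence to the assertion $h=h_L^{-k}\otimes\iota_{\underline{s}}^*h_{FS}$ for any $(\cdot,\cdot)$-orthonormal basis $\underline{s}$. Let $T_x\colon E(k)_x\to H^0(E(k))$ denote the adjoint of $\mathrm{ev}_x$ taken with respect to $(\cdot,\cdot)_{H_k}$ on the domain, so that $T_xv=\sum_j(v,s_j'(x))_{H_k(x)}s_j'$ for any $(\cdot,\cdot)_{H_k}$-orthonormal basis $(s_j')$. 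The first equation of \eqref{eq:balancedchar} then rewrites as $\mathrm{ev}_x\circ P\circ T_x=\Id_{E(k)_x}$. The self-adjointness of $P$ with respect to $(\cdot,\cdot)$ identifies $P\circ T_x=\mathrm{ev}_x^*$, since for every $s\in H^0(E(k))$ and $v\in E(k)_x$,
\[
(s,PT_xv)_{(\cdot,\cdot)}=(Ps,T_xv)_{(\cdot,\cdot)}=(s,T_xv)_{(\cdot,\cdot)_{H_k}}=(s(x),v)_{H_k(x)}=(s,\mathrm{ev}_x^*v)_{(\cdot,\cdot)},
\]
and thus the equation is equivalent to $\mathrm{ev}_x\circ\mathrm{ev}_x^*=\Id$, which gives the claim.

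Second, for the explicit form of $P$ in \eqref{eq:balancedchar}, I note that in any $(\cdot,\cdot)$-orthonormal basis $\underline{s}$ the Gram matrix of the derived $L^2$-metric is $(s_i,s_j)_{H_k}=(Ps_i,s_j)_{(\cdot,\cdot)}=P_{ji}$. Combined with the identification $H_k=\iota_{\underline{s}}^*h_{FS}$ supplied by the first equation of \eqref{eq:balancedchar}, this yields
\[
\int_X(s_i,s_j)_{\iota_{\underline{s}}^*h_{FS}}\frac{\omega^n}{n!}=P_{ji},
\]
so the second line of \eqref{eq:balancedchar} says precisely that this $P$ equals the explicit expression $\chi^{-1}(\Id+\delta[\phi_*,(\phi_*)^*]/(1+\vertiii{\phi_*}^2))$; together these reproduce exactly the balanced equation \eqref{eq:balancedintro}. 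Both directions of the equivalence now follow: given a balanced pair in the sense of Definition~\ref{def:balanced}, the two lines of \eqref{eq:balancedchar} hold by construction, and conversely any pair satisfying \eqref{eq:balancedchar} is a balanced pair by these reformulations. The only bookkeeping to keep straight is the use of two different orthonormal bases---one for $(\cdot,\cdot)$ and one for $(\cdot,\cdot)_{H_k}$---with the insertion of $P$ in the expression $(Ps_j')(\cdot,s_j')_{H_k}$ being precisely the device that mediates between them.
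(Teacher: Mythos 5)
Your proof is correct and follows essentially the same route as the paper's: both identify the first equation of \eqref{eq:balancedchar} with the statement that $H_k$ is the Fubini--Study metric determined by $(\cdot,\cdot)$ (the paper via the identity $\iota_{\underline{s}}^*h_{FS}=(B^{-1}\cdot,\cdot)_{H_k}$, you via the equivalent quotient-metric characterization $\mathrm{ev}_x\circ\mathrm{ev}_x^*=\Id$ and the identity $P\circ T_x=\mathrm{ev}_x^*$), and both reduce the second equation to the Gram-matrix form of the balanced condition, with the insertion of $P$ playing exactly the role of the paper's change-of-basis identity \eqref{eq:Bergmanchange}. Your version is if anything slightly more self-contained, since you derive the Fubini--Study identity rather than quoting it.
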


\begin{remark}
Note that the first condition in \eqref{eq:balancedchar} is independent of the choice of $L^2$-orthonormal basis $s_j'$. 
\end{remark}

\begin{remark}
When $\phi=0$ the two equations become $P=\chi^{-1}\Id$ and $B_k:=\sum s_j' (\cdot,s_j')_{H_k} = \chi \Id$ where $B_k$ is the Bergman function of $H_k$. In this case the existence of a balanced metric is equivalent to one for which the Bergman function is constant (for then one can take $(\cdot,\cdot)$ to be the induced $L^2$-metric).
\end{remark}

\begin{proof}
The proof is based on two facts. First, given a basis $\underline s =(s_1, \ldots, s_N)$ of $H^0(E(k))$, the pull-back of the Fubini-Study metric $h_{FS}$ on the universal quotient bundle over $\mathbb{G}(\CC^N;r)$ is given by
$$
\iota_{\underline s}^* h_{FS} = (B^{-1}\cdot,\cdot)_{H_k}, \qquad \textrm{for} \qquad B = \sum_l s_l(\cdot,s_l)_{H_k}
$$
and an arbitrary choice of hermitian metric $H_k$ on $E(k)$. Second, given an invertible endomorphism $P$ of $H^0(E(k))$ that is hermitian with respect to the hermitian metric induced by $\underline{s}$, the basis $s_j' = P^{-1/2}s_j = \sum_l(P^{-1/2})_{lj}s_l$ satisfies
\begin{equation}\label{eq:Bergmanchange}
\sum_j s_j(\cdot,s_j)_{H_k} = \sum_{jl} P_{jl}s'_j(\cdot,s'_l)_{H_k} = \sum_{l} (Ps'_l)(\cdot,s'_l)_{H_k}
\end{equation}
We proceed to the proof. For the `only if' part, take $H_k = \iota_{\underline s}^* h_{FS}$, with $\underline s$ the balanced basis and denote by $(\cdot,\cdot)_{H_k}$ the induced $L^2$-metric on $H^0(E(k))$. Observe that the balanced condition implies the relation $$
(P\cdot,\cdot) = (\cdot,\cdot)_{H_k},
$$ 
with $P$ as in \eqref{eq:balancedchar} and hence $s_j' = P^{-1/2}s_j$ is an orthonormal basis for $(\cdot,\cdot)_{H_k}$. The result follows from \eqref{eq:Bergmanchange} and the fact that $H_k$ is pull-back of the Fubini-Study metric, that gives $\Id = \sum_j s_j(\cdot,s_j)_{H_k}$.

For the `if' part, choose an orthonormal basis $(s_j')$ for $(\cdot,\cdot)_{H_k}$ and consider $s_j = P^{1/2}s_j'$, that provides an orthonormal basis for $(\cdot,\cdot) = (P^{-1}\cdot,\cdot)_{H_k}$. We claim that $(s_j)$ is a balanced basis. This follows from \eqref{eq:Bergmanchange} and the first equation in \eqref{eq:balancedchar}, that give $H_k = \iota_{\underline s}^* h_{FS}$.

\end{proof}

\section{Geometric Invariant Theory}\label{sec:GIT}

\subsection{Further Properties of twisted Higgs bundles}\label{sec:properHitp}

We collect some further properties of a twisted Higgs bundle $(E,\phi\colon M\otimes E\to E)$ again under the assumption that $M$ is globally generated.  Abusing notation we shall let $\phi$ also denote the induced map $M\otimes E(k)\to E(k)$ obtained by tensoring with the identity.

\begin{lemma}\label{lem:simple}
  If $(E,\phi)$ is Gieseker stable then it is simple.
\end{lemma}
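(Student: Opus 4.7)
The plan is to adapt the standard Schur-type argument for Gieseker-stable bundles to the $\phi$-invariant setting. Given a non-zero $\alpha \in \End(E,\phi)$, meaning $\alpha\colon E\to E$ satisfies $\alpha\circ \phi = \phi\circ(\Id_M\otimes \alpha)$, the goal is to show $\alpha = \lambda \Id_E$ for some $\lambda \in \CC$, from which the description of $\Aut(E,\phi)$ follows.

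The first step is to pick any $x \in X$ and any eigenvalue $\lambda$ of the fibrewise endomorphism $\alpha_x\colon E_x\to E_x$, and form $\beta := \alpha - \lambda \Id_E$. The same commutation relation then reads $\beta\circ \phi = \phi\circ(\Id_M\otimes \beta)$, and a direct check shows that both $\ker \beta$ and $\im \beta$ are $\phi$-invariant subsheaves of $E$: for a local section $s$ of $\ker \beta$ one has $\beta(\phi(m\otimes s)) = \phi(m\otimes\beta(s)) = 0$, and for the image, $\phi(m \otimes \beta(s)) = \beta(\phi(m \otimes s)) \in \im \beta$.

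The main step is then to argue $\beta = 0$. Assuming $\beta \neq 0$, I would split into two sub-cases. If $\ker \beta = 0$ as a subsheaf, then $\beta$ factors as a sheaf isomorphism $E \cong \im \beta$, giving $P_{\im \beta} = P_E$ and $\rk_{\im \beta} = \rk_E$; however $\im \beta \subsetneq E$ because $\beta_x$ has non-trivial kernel by the choice of $\lambda$, so $\im \beta$ is a proper $\phi$-invariant subsheaf of full rank, contradicting the strict Gieseker inequality $P_{\im \beta}/\rk_{\im \beta} < P_E/\rk_E$ for $k \gg 0$. If instead $\ker \beta$ has positive rank, then both $\ker \beta$ and $\im \beta$ are proper non-zero $\phi$-invariant subsheaves whose ranks add to $\rk_E$ and whose Hilbert polynomials sum to $P_E$. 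Adding the corresponding Gieseker inequalities for $k$ large enough to handle both simultaneously then gives $P_E < P_E$, a contradiction.

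The only mild obstacle I anticipate is the distinction between $\ker \beta$ being zero as a subsheaf of $E$ and $\beta_x$ being fibrewise injective at the chosen $x$; these need not coincide (they can fail to agree on a proper analytic subset), which is precisely why the two sub-cases must be treated separately. Apart from this bookkeeping, the argument is simply Schur's lemma applied in the category of $\phi$-invariant subsheaves.
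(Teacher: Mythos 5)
Your proof is correct and is essentially the paper's argument written out in full: the paper simply observes that $\ker(\alpha)$ and $\im(\alpha)$ are $\phi$-invariant and then cites the standard Schur-type argument for stable sheaves from Huybrechts--Lehn, which is exactly the eigenvalue-subtraction and kernel/image case analysis you carry out.
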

\begin{proof}
The proof is the same as the case for bundles \cite[1.2.7]{Huybrechts1}, since if $\alpha\colon E\to E$ is a morphism of twisted Higgs bundles then $\phi(\ker(\alpha)\otimes M)\subset \ker(\alpha)$ and similarly for $\im(\alpha)$.
\end{proof}

The next lemma says that $\phi_*$ completely captures the morphism $\phi$.   Over any point $x\in X$ we denote by
\begin{eqnarray*}
 e_{1,x} &\colon& H^0(E(k))  \to E(k)  \\
 e_{2,x}&\colon& H^0(M) \otimes H^0(E(k)) \to M\otimes E(k+l).
\end{eqnarray*}
the evaluation maps, that are surjective for $k$ sufficiently large.

\begin{lemma}\label{lem:phiphisharp}
  The map $\phi\mapsto \phi_{*}$ is a bijection between bundle morphisms $\phi \colon M\otimes E\to E$ and linear maps $\alpha\colon H^0(M)\otimes H^0(E(k))\to H^0(E(k))$ that for all $x\in X$ satisfy $\alpha(\ker(e_{2,x})) \subset \ker(e_{1,x})$.

\end{lemma}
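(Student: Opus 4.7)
My plan is to construct an explicit inverse to $\phi \mapsto \phi_*$ by descending $\alpha$ along the two evaluation sequences, once these are known to be fibrewise surjective. The forward implication $\phi_*(\ker e_{2,x}) \subset \ker e_{1,x}$ is immediate: if $u \in \ker e_{2,x}$ has image $\sigma \in H^0(M\otimes E(k))$ under the multiplication map, then $\sigma(x)=0$, and since $\phi$ is $\mathcal{O}_X$-linear one has $\phi(\sigma)(x) = \phi_x(\sigma(x)) = 0$, so $\phi_*(u) \in \ker e_{1,x}$.

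The key preliminary is that for $k$ sufficiently large both $e_{1,x}$ and $e_{2,x}$ are surjective at every $x \in X$. Surjectivity of $e_{1,x}$ is global generation of $E(k)$. For $e_{2,x}$, I factor it as the composition of the multiplication $H^0(M) \otimes H^0(E(k)) \to H^0(M\otimes E(k))$ with the evaluation $H^0(M\otimes E(k)) \to (M\otimes E(k))|_x$. The second arrow is surjective by global generation of $M \otimes E(k)$ for $k \gg 0$. For the first, the hypothesis that $M$ is globally generated gives a short exact sequence $0 \to K \to H^0(M)\otimes\mathcal{O}_X \to M \to 0$; tensoring by $E(k)$ and applying Serre vanishing $H^1(K\otimes E(k)) = 0$ for $k \gg 0$ yields surjectivity of the multiplication.

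To construct the inverse, I view $\alpha$ as a morphism of trivial bundles $\alpha \otimes \mathrm{id}_{\mathcal{O}_X}$, sitting alongside the (now surjective) evaluation bundle maps $e_1$ and $e_2$. Since $\ker e_1$ and $\ker e_2$ are therefore holomorphic subbundles, the pointwise hypothesis $\alpha(\ker e_{2,x}) \subset \ker e_{1,x}$ says precisely that $\alpha \otimes \mathrm{id}_{\mathcal{O}_X}$ sends $\ker e_2$ into $\ker e_1$, and passage to quotients produces a unique holomorphic bundle morphism $\tilde\phi \colon M\otimes E(k) \to E(k)$. Then $\phi := \tilde\phi \otimes \mathrm{id}_{L^{-k}} \colon M \otimes E \to E$ does the job: evaluating the identity $e_1 \circ \alpha = \tilde\phi \circ e_2$ at $x$ gives $\alpha(u)(x) = \phi_*(u)(x)$ for every $u$ and every $x$, so $\phi_* = \alpha$, and injectivity of $\phi \mapsto \phi_*$ follows because $\phi$ is determined by $\tilde\phi$, which in turn is the unique descent of $\phi_*$. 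The main obstacle is really the fibrewise surjectivity of $e_{2,x}$, which is where the globally generated hypothesis on $M$ enters crucially; without it, the clean descent above fails.
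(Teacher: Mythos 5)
Your proof is correct and follows essentially the same route as the paper: the inverse is constructed by descending $\alpha$ through the surjective evaluation maps, with the hypothesis $\alpha(\ker(e_{2,x}))\subset \ker(e_{1,x})$ guaranteeing that the induced map $\tilde{\phi}$ on fibres is well defined. The only differences are cosmetic --- you phrase the descent in bundle-quotient language rather than pointwise, and you spell out why $e_{2,x}$ is surjective for $k\gg 0$ (which the paper simply asserts) --- but the underlying argument is identical.
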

\begin{proof}
A simple diagram chase shows that if $\alpha=\phi_*$ then $\alpha$ satisfies this condition.  In the other direction, suppose that $\alpha$ is a linear map that satisfies $\alpha(\ker(e_2,x)) \subset \ker(e_{1,x})$ for all $x\in X$.   Then we can define $\tilde{\phi}\colon M\otimes E(k)\to E(k)$ by saying if $\zeta \in M\otimes E(k)_x$ pick an $s\in H^0(M)\otimes H^0(E(k))$ with $s(x) = \zeta$ and set $\tilde{\phi}(x): = \alpha(s)(x)$.  The assumed condition implies this is independent of choice of $s$, and so $\tilde{\phi}$ gives a holomorphic bundle map that induces $\phi\colon M\otimes E\to E$ obtained by tensoring with $id_{L^{-k}}$.   Clearly then $\alpha = \phi_*$ and this gives the required bijection.
\end{proof}

This correspondence respects subobjects, as made precise in the next lemma.

\begin{definition}\
  \begin{enumerate}
  \item   We say a subsheaf $F\subset E$ is invariant under $\phi$ if $\phi(M\otimes F)\subset F$.
  \item   We say a subspace $U_0 \subset H^0(E(k))$ is invariant under $\phi_{*}$ if 
\[\phi_*(H^0(M)\otimes U_0) \subset U_0.\]
\end{enumerate}
\end{definition}

\begin{lemma}\label{lem:invariant}\
  \begin{enumerate}
  \item If $F\subset E$ is invariant under $\phi$ then $H^0(F(k))$ is invariant under $\phi_{*}$.
  \item Suppose $U'\subset H^0(E(k))$ is invariant under $\phi_{*}$.   Then the subsheaf $F$ of $E$ generated by $U'\otimes L^{-k}$ is invariant under $\phi$.
  \item Let $U_j$ be a subspace of $U$ for $j=1,2$ and let $F_j$ be the subsheaf of $E$ generated by $U_j\otimes L^{-k}$.   If $\phi_*(H^0(M)\otimes U_1)\subset  U_2$ then $\phi(M\otimes G_1)\subset \phi(G_2)$.
  \end{enumerate}
\end{lemma}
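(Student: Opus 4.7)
The plan is to prove the three items in order, using the global generation of $M$ crucially in parts (2) and (3), while part (1) is essentially tautological.

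For part (1), the statement is purely formal. The $\phi$-invariance of $F$ says that $\phi$ restricts to a bundle morphism $M \otimes F \to F$. Tensoring with the identity on $L^k$ and taking global sections, then pre-composing with the natural multiplication map $H^0(M) \otimes H^0(F(k)) \to H^0(M \otimes F(k))$, realises the restriction of $\phi_{*}$ to $H^0(M) \otimes H^0(F(k))$ as a map into $H^0(F(k))$, which is exactly what is required.

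For part (2), the key observation is that by the very definition of the subsheaf generated by $U' \otimes L^{-k}$, the fibre $F(k)_x$ equals $e_{1,x}(U')$ for every $x \in X$. It therefore suffices to verify the inclusion $\phi(M \otimes F) \subset F$ on fibres (after twisting by $L^k$, which is harmless). Fix $x$; by global generation of $M$ the fibre $M_x$ is spanned by values $s(x)$ with $s \in H^0(M)$, while $F(k)_x$ is spanned by $u(x)$ with $u \in U'$. Hence $M_x \otimes F(k)_x$ is spanned by elementary tensors $s(x) \otimes u(x)$. For any such pair one has
\[
\phi(s(x) \otimes u(x)) = \phi_{*}(s \otimes u)(x),
\]
which is just the commutativity of evaluation with the morphism $\phi_{*}$, and the invariance hypothesis $\phi_{*}(s \otimes u) \in U'$ places this value inside $e_{1,x}(U') = F(k)_x$, as required.

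For part (3), read as the natural statement $\phi(M \otimes F_1) \subset F_2$, the same argument applies verbatim: at each $x$, and for $s \in H^0(M)$, $u \in U_1$, one has $\phi(s(x) \otimes u(x)) = \phi_{*}(s \otimes u)(x) \in e_{1,x}(U_2) = F_2(k)_x$, since by hypothesis $\phi_{*}(s \otimes u) \in U_2$. The only substantive input in both (2) and (3) is the reduction to elementary tensors afforded by the global generation of $M$, and this standing hypothesis carries the whole argument, so I do not anticipate any real obstacle.
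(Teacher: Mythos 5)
Your parts (1) is fine and your identity $\phi(s(x)\otimes u(x))=\phi_*(s\otimes u)(x)$, together with global generation of $M$, is exactly the engine of the paper's argument. The gap is in the reduction "it suffices to verify the inclusion $\phi(M\otimes F)\subset F$ on fibres", together with the claim that $F(k)_x=e_{1,x}(U')$ for \emph{every} $x$. The subsheaf $F$ generated by $U'\otimes L^{-k}$ need not be a subbundle, and at points where the evaluation $e_{1,x}|_{U'}$ drops rank these statements break down: a section of $E(k)$ whose value at each point lies in $e_{1,x}(U')$ is in general only a section of the \emph{saturation} of $F(k)$, not of $F(k)$ itself. (For instance, on a disc with $E=\mathcal O^{2}$ and $F$ generated by the single section $(z^2,z^3)$, the section $(z,z^2)$ has its value at every point in the pointwise image of $F$ but is not a section of $F$.) So pointwise containment of values does not by itself give containment of subsheaves, which is precisely why the paper's proof first treats the locus where $G_1,G_2$ are subbundles and then extends across the bad set using torsion-freeness, density of the good locus, and localization of the corresponding modules.

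The repair is close to what you wrote, but must be carried out at the level of stalks rather than fibres. Since $M$ is globally generated and $F(k)$ is by definition generated by $U'$, the stalk $(M\otimes F(k))_x$ is generated as an $\mathcal O_{X,x}$-module by the germs of the elements $s\otimes u$ with $s\in H^0(M)$, $u\in U'$ (Nakayama upgrades fibrewise generation to stalk generation for these globally generated sheaves). For each such generator the hypothesis gives $\phi_*(s\otimes u)\in U_2$, and elements of $U_2$ genuinely are global sections of the generated subsheaf $F_2(k)$ by construction; $\mathcal O_{X,x}$-linearity of $\phi$ then yields $\phi\((M\otimes F_1(k))_x\)\subset F_2(k)_x$ at every stalk, which is the sheaf-level inclusion. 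With this adjustment your argument is correct and in fact bypasses the paper's density/localization step; note also that the paper deduces (2) from (3) with $U_1=U_2=U'$, whereas you prove (2) directly — an inessential difference.
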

\begin{proof}
The statement (1) is clear, for if $s\in H^0(F(k))$ and $s_M\in H^0(M)$ then  $\phi_*(s\otimes s_M)\in H^0(F(k))$ since $F$ is invariant under $\phi$. 

The statement (2) follows from (3) letting $U_1=U_2=U'$.  We first prove (3) in the case that $G_1$ and $G_2$ are subbundles of $E$.  Let $\zeta_{F}\in G_1(k)_x$ for some $x\in X$ and $\zeta_M\in M_x$.  By definition there is a $u\in U_1$ so that $u(x) = \zeta_{F}$.   Moreover as $M$ is globally generated there is an $s_M\in H^0(M)$ with $s_M(x) = \zeta_M$.  Now by hypothesis $\phi_*(s_M\otimes u)\in U_1$ and so as $U_1$ generates $G_2(k)$ we have
\[ \phi(\zeta_{M}\otimes \zeta_F) = \phi_*(s_M\otimes u)(x) \in G_2(k)_x.\]
Thus $\phi(M\otimes G_1(k))\subset G_2(k)$ so $\phi(M\otimes G_1)\subset G_2$ as claimed.

The case that $G_1$ and $G_2$ are merely subsheaves follows from this.  For both $G_1$ and $G_2$ are necessarily torsion free, so there is a Zariski open set $U$ over which $G_1$ and $G_2$ are subbundles, and the above gives $\phi(M\otimes G_1)\subset G_2|_U$.  But since $U$ is dense this implies that in fact $\phi(M\otimes G_1)\subset G_2)$ (as can be seen by looking at the localization of the corresponding modules).

\end{proof}

\subsection{An extension of a result of Gieseker}

For any vector bundle $E$ of rank $r$ there is the associated multiplication map
\[ T_E\colon \Lambda^r H^0(E(k)) \to H^0(\det(E(k))).\]

We let $\Hom_k: = \Hom(\Lambda^r H^0(E(k), H^0(\det(E(k))$.  A classical result of Gieseker \cite{Gieseker} states that for all $k$ sufficiently large, $E$ is Gieseker (semi)stable if and only if the point 
$$[T_E]\in \mathbb P(\Hom_k)=:\mathbb P$$
is (semi)stable in the sense of Geometric Invariant Theory.   More precisely, fixing a vector space $U$ of dimension $N_k:=h^0(E(k))$ and choice of isomorphism $U\simeq H^0(E(k))$ the orbit of $[T_E]$ is independent of this choice, and $E$ is Gieseker (semi)stable if and only the points in the orbit are (semi)stable with respect to the linearised $SL(U)$ action on $\mathcal O_{\mathbb P(\Hom_k)}(1)$.   Here we abuse notation somewhat since the space $H^0(\det(E(k)))$ also depends on $E$, but this is easily circumvented by treating $\mathbb P(\Hom_k)$ as a suitable projective bundle over $\operatorname{Pic}(X)$ (or alternatively by restricting attention to bundles $E$ with a given determinant). 

The purpose of this section is to extend this result to twisted Higgs bundles.  Such extensions are quite standard, and have been achieved in various contexts (e.g. \cite{Huybrechts2,Nitsure,Sols}) with perhaps the most general being \cite{Schmitt1}.     We include the details for completeness, and since they are somewhat simpler in the specific case we are considering.

Fix a twisted Higgs bundle $(E,\phi\colon E\otimes M\to E)$ with $M$ globally generated and let $Z=\Hom(H^0(M)\otimes H^0(E(k)),H^0(E(k))$ be the parameter space as considered in \secref{sec:parameter}.    We recall that $\phi_{*}$ also denotes the image of $\phi_{*}$ under the natural inclusion $Z\subset \overline{Z}= P(Z\oplus \mathbb C)$.  The group $SL(U)$ acts on the product $\overline{Z}\times \mathbb P$, and admits a natural linearization to the line bundle $\mathcal L:=\mathcal O_{\overline{Z}}(\epsilon) \boxtimes \mathcal O_{\mathbb P}(1)$ which is ample for $\epsilon>0$.



\begin{theorem}\label{th:theorem3}
There is an $\epsilon_0>0$ such that for all rational $\epsilon \ge \epsilon_0k^{-1}$  the following holds: if the twisted Higgs bundle $(E,\phi)$ is Gieseker (semi/poly)-stable then $(\phi_*,T_E)\in \overline{Z}\times \mathbb P$ is (semi/poly)-stable.
\end{theorem}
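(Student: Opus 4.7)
The plan is to apply the Hilbert--Mumford numerical criterion, adapting Gieseker's classical argument \cite{Gieseker} to incorporate the additional factor $\overline Z$. Given a nontrivial one-parameter subgroup $\lambda\colon \mathbb{G}_m \to SL(U)$ with $U = H^0(E(k))$, decompose $U = \bigoplus_i U_i$ into weight spaces with strictly increasing weights $\gamma_1 < \ldots < \gamma_s$ and $\sum_i \gamma_i \dim U_i = 0$, set $W_i := \bigoplus_{j\leq i} U_j$, and let $F_i \subset E$ be the subsheaf generated by $W_i \otimes L^{-k}$. The Mumford weight at $((\phi_*,1), T_E)$ with respect to $\mathcal L$ splits additively:
$$\mu_{\mathcal L}\bigl(((\phi_*,1), T_E), \lambda\bigr) = \epsilon \cdot \mu_{\overline Z}((\phi_*,1), \lambda) + \mu_{\mathbb P}(T_E, \lambda).$$

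First I would compute each summand. The term $\mu_{\mathbb P}(T_E,\lambda)$ is the classical Gieseker weight, a linear form in $\{\gamma_i\}$ with coefficients built from $\rk_E$, $\rk_{F_i}$, $h^0(E(k))$ and $h^0(F_i(k))$. For the $\overline Z$-weight, note that $\lambda$ acts trivially on $H^0(M)$, so the component $\phi_{ji}\colon H^0(M)\otimes U_i \to U_j$ of $\phi_*$ carries $\lambda$-weight $\gamma_j-\gamma_i$, while the constant direction $1\in\mathbb{C}$ has weight $0$. Hence
$$\mu_{\overline Z}((\phi_*,1), \lambda) = -\min\Bigl(0,\; \min\{\gamma_j-\gamma_i : \phi_{ji}\neq 0\}\Bigr),$$
which vanishes precisely when the filtration $W_\bullet$ is $\phi_*$-invariant (i.e.\ $\phi_{ji}=0$ for $j<i$) and is strictly positive otherwise.

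I would then separate two cases. In the \emph{invariant case}, Lemma \ref{lem:invariant}(2) ensures that each $F_i$ is $\phi$-invariant, so Gieseker-(semi/poly)-stability of $(E,\phi)$ applied to $F_\bullet$, combined with the standard summation-by-parts manipulation (cf.\ \cite[\S 4.4]{Huybrechts1}), produces the correct sign for $\mu_{\mathbb P}(T_E,\lambda)$; since $\mu_{\overline Z}((\phi_*,1),\lambda)=0$, the combined weight has the required sign. The polystable case is handled by verifying that vanishing of the combined weight forces each $F_i$ to be a direct summand in the polystable decomposition of $(E,\phi)$.

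In the \emph{non-invariant case}, the strict positivity $\mu_{\overline Z}((\phi_*,1),\lambda)>0$ already pushes the total weight toward stability. I would then compare $\lambda$ with its $\phi_*$-invariant saturation $\widetilde\lambda$, obtained by replacing each $W_i$ with the smallest $\phi_*$-invariant subspace $\widetilde W_i$ containing it (constructed by iteratively adjoining $\phi_*(H^0(M)\otimes W_i)$, which terminates by finite-dimensionality of $U$). The invariant case applied to $\widetilde\lambda$ bounds $\mu_{\mathbb P}(T_E,\widetilde\lambda)$, and the key quantitative step is an estimate
$$\mu_{\mathbb P}(T_E,\lambda) \geq \mu_{\mathbb P}(T_E,\widetilde\lambda) - C(k)\cdot \mu_{\overline Z}((\phi_*,1),\lambda)$$
for an explicit $C(k)$ obtained by tracking the enlargements $h^0(\widetilde F_i(k))-h^0(F_i(k))$ and $\rk_{\widetilde F_i}-\rk_{F_i}$ arising from saturation, weighted against the gaps $\gamma_j-\gamma_i$. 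Matching the polynomial order in $k$ of $C(k)$ to the Gieseker-stability margin yields the threshold $\epsilon_0 k^{-1}$: for $\epsilon \geq \epsilon_0 k^{-1}$ with $\epsilon_0$ sufficiently large, the combined weight inherits the correct sign.

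The main obstacle is precisely this quantitative comparison between $\mu_{\mathbb P}(T_E,\lambda)$ and $\mu_{\mathbb P}(T_E,\widetilde\lambda)$: pinning down the polynomial order in $k$ of $C(k)$ and matching it to the threshold $\epsilon_0 k^{-1}$. For the polystable statement, one must additionally verify that vanishing of the total Mumford weight forces a $\phi$-invariant direct sum decomposition of $(E,\phi)$, which is the decorated analogue of the standard GIT-polystability argument.
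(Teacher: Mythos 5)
Your skeleton --- Hilbert--Mumford, the additive splitting $\mu_{\mathcal L}=\mu_{\mathbb P}+\epsilon\mu_{\overline Z}$, identification of the $\overline Z$-weight as the maximal weight gap violated by $\phi_*$, and a case division on whether the weight filtration is $\phi_*$-invariant --- matches the paper, and your invariant case is handled the same way. But the non-invariant case, which is where the content of the theorem lies, has a genuine gap. The paper does \emph{not} modify the one-parameter subgroup. Instead it proves (Theorem \ref{thm:lepotier}, via the LePotier--Simpson estimate together with boundedness of the family of saturated subsheaves of slope bounded below) that for a Gieseker-(semi)stable pair \emph{every} subsheaf $F$, invariant or not, satisfies $h^0(F(k))/\rk_F\le h^0(E(k))/\rk_E + C{\mathbf 1}_F k^{n-1}$: a non-invariant subsheaf can destabilize, but only by $O(k^{n-1})$. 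This gives $\Theta(U_{\le n})\ge -C{\mathbf 1} k^{n-1}$ termwise; since the filtration has at most $\rk_E$ jumps and $\dim U\sim k^n$, the total loss in $\mu_1$ is at most $\frac{C\rk_E k^{n-1}}{\dim U}\,\alpha=O(k^{-1})\mu_2$, which $\epsilon\mu_2$ absorbs once $\epsilon\ge\epsilon_0k^{-1}$. Your proposed substitute --- comparing $\lambda$ with its $\phi_*$-invariant saturation $\widetilde\lambda$ and positing $\mu_{\mathbb P}(T_E,\lambda)\ge\mu_{\mathbb P}(T_E,\widetilde\lambda)-C(k)\mu_{\overline Z}(\lambda)$ --- does not supply this input. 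The enlargements $h^0(\widetilde F_i(k))-h^0(F_i(k))$ can be of order $k^n$ (e.g.\ when iterating $\phi_*$ on a one-dimensional $W_1$ generates all of $U$, so $\widetilde\lambda$ is trivial and your inequality degenerates to an unconditional lower bound on $\mu_{\mathbb P}(T_E,\lambda)$ itself), and nothing ties them to the weight gap that computes $\mu_{\overline Z}(\lambda)$. Any lower bound on $\mu_{\mathbb P}(T_E,\lambda)$ must control how badly a \emph{non-invariant} subsheaf of a semistable pair can violate the $h^0$-inequality, which is exactly the LePotier--Simpson/boundedness statement your sketch never establishes; you correctly flag this as ``the main obstacle,'' but it is the whole proof.

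Two smaller points. Your formula $\mu_{\overline Z}=-\min\bigl(0,\min\{\gamma_j-\gamma_i:\phi_{ji}\neq0\}\bigr)$ is positive on components of $\phi_*$ sending \emph{high} weight to \emph{low} weight, hence vanishes when the \emph{decreasing} filtration is preserved, whereas the Gieseker term you pair it with is controlled by stability applied to the subsheaves generated by the increasing filtration $W_\bullet$; one of the two sign conventions must be flipped (the paper takes $\lambda\to\infty$ precisely so that $\mu_2=0$ forces invariance of the $U_{\le n}$, and hence, via Lemma \ref{lem:invariant}, of the $F_{\le n}$). Finally, for the polystable direction one also needs the equality analysis in Theorem \ref{thm:lepotier} (equality forces $F$ invariant with $P_F/\rk_F=P_E/\rk_E$), which the saturation comparison likewise does not yield.
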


\begin{theorem}\label{thm:weakconverse}
  Suppose a twisted Higgs bundle $(E,\phi)$ is not (semi/poly)-stable.  Then for all $k$ sufficiently large and all positive $\epsilon$ the point $(\phi_{*},T_E)\in \overline{Z}\times \mathbb P$ is not (semi/poly)-stable.
\end{theorem}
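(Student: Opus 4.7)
My plan is to adapt the classical Gieseker argument \cite{Gieseker} by exploiting the specific form of the compactification $\overline{Z} = \PP(Z \oplus \CC)$ together with the invariance properties in Lemma \ref{lem:invariant}. Suppose first that $(E,\phi)$ is not Gieseker semistable, and pick a $\phi$-invariant saturated subsheaf $F \subset E$ with $P_F(k)/\rk_F > P_E(k)/\rk_E$ for all $k \gg 0$. By standard boundedness, the set of such destabilizing subsheaves is bounded, so I can choose a uniform $k_0$ beyond which (i) the polynomial inequality holds at $k$, (ii) $h^0(F(k)) = P_F(k)$ and $h^0(E(k)) = P_E(k)$, (iii) $H^0(F(k))$ generates $F(k)$ as a sheaf, and (iv) the classical Gieseker conclusion applies to $E$. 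For any $k \geq k_0$, write $e = h^0(E(k))$ and $f = h^0(F(k))$, and introduce the one-parameter subgroup $\lambda_F \colon \CC^* \to SL(U)$ acting with distinct weights on $H^0(F(k))$ and a chosen complement, normalized to lie in $SL(U)$.

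I would then compute the Hilbert--Mumford weight separately for each factor of $\overline{Z} \times \PP$. At the $\PP$-factor, the classical Gieseker computation shows that $\lambda_F$ destabilizes $T_E \in \PP(\Hom_k)$ under the $\cO_{\PP}(1)$-linearization; the numerical content is that the minimum weight of $T_E$ under $\lambda_F$ has the sign of $\rk_E f - \rk_F e > 0$. At the $\overline{Z}$-factor, Lemma \ref{lem:invariant}(1) tells us that $H^0(F(k))$ is $\phi_*$-invariant, so in the block decomposition of $\phi_* \in Z = \Hom(H^0(M) \otimes U, U)$ the component mapping $H^0(F(k))$ into the complement vanishes. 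Consequently every nonzero homogeneous component of $\phi_*$ under $\lambda_F$ lies in a non-destabilizing weight space, and including the $\lambda_F$-fixed direction $1 \in \CC$ inside $\overline{Z} = \PP(Z \oplus \CC)$, the limit $\lim_{t \to 0} \lambda_F(t) \cdot [\phi_*, 1]$ exists in $\overline{Z}$ with Hilbert--Mumford weight exactly $0$ against $\cO_{\overline{Z}}(\epsilon)$. Combining, the total Hilbert--Mumford weight on $\mathcal{L} = \cO_{\overline{Z}}(\epsilon) \boxtimes \cO_{\PP}(1)$ is nonzero of the destabilizing sign for every $\epsilon > 0$, which gives the desired GIT-instability.

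The strictly semistable and polystable cases are handled by variants of the same construction. If $(E,\phi)$ is semistable but not Gieseker-stable, an analogous saturated $\phi$-invariant subsheaf $F$ of equal reduced Hilbert polynomial produces $\lambda_F$ with vanishing Hilbert--Mumford weight on all of $\mathcal{L}$, showing that $(\phi_*, T_E)$ is not GIT-stable. For polystability, start from a nonsplit extension $0 \to F \to E \to E/F \to 0$ of twisted Higgs bundles of equal reduced Hilbert polynomial witnessing the failure of polystability. Using the bijection in Lemma \ref{lem:phiphisharp} together with the identification in Lemma \ref{lem:invariant}(3), one verifies that $\lim_{t \to 0} \lambda_F(t) \cdot (\phi_*, T_E)$ corresponds precisely to the associated graded twisted Higgs bundle $(F \oplus E/F, \phi_F \oplus \phi_{E/F})$. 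Since the extension is nonsplit in the category of twisted Higgs bundles, this limit lies outside the $SL(U)$-orbit of $(\phi_*, T_E)$; hence the orbit is not closed in the semistable locus and $(\phi_*, T_E)$ is not GIT-polystable.

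I expect the main obstacle to be the polystability step, namely identifying the orbit limit $\lim_{t \to 0} \lambda_F(t) \cdot (\phi_*, T_E)$ with the associated graded Higgs bundle. This requires verifying that the off-block-diagonal component of $\phi_*$, which by Lemma \ref{lem:invariant}(3) precisely encodes the twisted Higgs extension data from $E/F$ back into $F$, is scaled to zero as $t \to 0$, while the two diagonal blocks recover the Higgs fields $\phi_F$ and $\phi_{E/F}$ on the nose. A secondary technical point is uniformity of $k_0$ in the destabilizing $F$, which follows from boundedness of the Jordan--H\"older constituents of a Gieseker-unstable twisted Higgs bundle $(E,\phi)$.
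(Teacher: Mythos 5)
Your proposal is correct and follows essentially the same route as the paper: the same two-block one-parameter subgroup built from $H^0(F(k))$ and a complement, the same observation that $\phi_*$-invariance of $H^0(F(k))$ (Lemma \ref{lem:invariant}(1)) forces the $\overline{Z}$-weight to vanish so the total Hilbert--Mumford weight reduces to the classical Gieseker weight $\rk_F h^0(E(k)) - \rk_E h^0(F(k))$ for every $\epsilon>0$, and the same treatment of the polystable case via the limit being the associated graded object $F\oplus E/F$ outside the orbit. The only superfluous element is your worry about uniformity of $k_0$ over destabilizing subsheaves $F$: since the statement concerns a single pair $(E,\phi)$, one fixed destabilizing $F$ suffices and $k_0$ may be chosen for that $F$ alone, exactly as in the paper.
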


\begin{remark}\
 We shall apply the previous theorems with
$$ \epsilon = \frac{\delta}{\chi}$$
which satisfies the hypothesis as $\delta$ and $\chi$ are strictly of orders  $O(k^{n-1})$ and $O(k^n)$ respectively.
\end{remark}


The proofs are a standard application of the Hilbert-Mumford criterion, and we shall as far as possible follow the approach taken in \cite{Schmitt1} and \cite{Huybrechts1}.      A non-trivial one parameter subgroup $\lambda$ of $SL(U)$ determines a weight decomposition $U= \oplus_n U_n$ where $U_n$ is the eigenspace of weight $n$.  Define $U_{\le n} = \oplus_{i\le n} U_i$.   Fixing an isomorphism $H^0(E(k))=U$,  let 
\[\rho\colon U \otimes L^{-k}\to E\] be the natural evaluation map which is surjective for $k\gg 0$.     We let $F_{\le n}$ be the saturation of $\rho(U_{\le n}\otimes L^{-k})\subset E$ and set $F_n := F_{\le n}/ F_{\le n-1}$.  Then the one-parameter subgroup acts on $F_n$ with weight $n$.  We observe also that the saturation assumption implies $r_n:=\rk_{F_n}\ge 1$.  

Then, as is well known \cite[p122]{Huybrechts1},  $\det(E) \simeq \otimes_n \det(F_n)$ (non-invariantly), and the limit of the point $T_E$ under this one parameter subgroup is 
\begin{equation}
  \label{eq:gitlimit}
\lim_{\lambda\to \infty} \lambda \cdot T_E = \left[\Lambda^r U \to \bigotimes_n \Lambda^{r_n} U_n \to H^0(\bigotimes_n \det(F_n(k))) \right],  
\end{equation}
and the Hilbert-Mumford weight with respect to $\mathcal O_{\mathbb P}(1)$ of this point is 
\[\mu_1 := - \sum_n n r_n\]
(we remark that contrary to that reference \cite{Huybrechts1} here we take $\lambda$ to infinity since we have chosen the group to act on the left).  Now given any $U'\subset U$ let $F'$ be the saturation of $\rho(U'\otimes L^{-k})$ and define
\[ \Theta(U') = \rk_{F'} \dim U - r \dim(U').\]
Then a simple calculation shows \cite[p122]{Huybrechts1}
\[\mu_1 = \frac{1}{\dim U} \sum_n \Theta(U_{\le n}).\]

To extend this to twisted Higgs bundles, denote by $\mu_2$ the Mumford weight of the point $\phi_*\in \overline{Z}$ taken with respect to $\mathcal O_{\overline{\!Z}}(1)$.   Then the Mumford weight of the point $(\phi_{*},T_E)$ with respect to the lineraised action on $\mathcal L$ is
\[ \mu := \mu_1 + \epsilon \mu_2.\] 
  


Now for any subsheaf $F\subset E$ define
\[ {\mathbf 1}_{F} = \left\{
  \begin{array}{ll}
   0 & \text{ if } \phi(M\otimes F)\subset F \\
   1 & \text{ otherwise}.
  \end{array}
\right.\]

The next theorem is a consequence of the LePotier-Simpson estimate.

\begin{theorem}\label{thm:lepotier}
  Suppose $(E,\phi)$ is (semi)stable.  Then there is a $C>0$ such that for $k$ sufficiently large and all subsheaves $F\subset E$ of rank $0<\rk_F<\rk_E$ we have
  \begin{equation}
\frac{1}{\rk_F}h^0(F(k))(\le) \frac{1}{\rk_E}h^0(E(k)) + C {\mathbf 1}_{F} k^{n-1}\label{eq:lpinequality}
\end{equation}
Moreover if equality holds then $F$ is invariant under $\phi$ and $P_F/\rk_F = P_E/\rk_E$.
\end{theorem}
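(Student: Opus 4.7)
My plan is to deduce this twisted-Higgs refinement from the classical Le~Potier--Simpson estimate, via a case split on the value of $\mathbf{1}_F$. If $\mathbf{1}_F=0$, i.e.\ $\phi(M\otimes F)\subset F$, then Gieseker (semi)stability of $(E,\phi)$ applied directly to the $\phi$-invariant subsheaf $F$ gives $P_F(k)/\rk_F\le P_E(k)/\rk_E$, strictly in the stable case. For $k$ sufficiently large Serre vanishing identifies $h^0$ with $\chi=P$ on both sides, so the inequality holds with no $Ck^{n-1}$ correction needed, and equality at such a $k$ already forces equality of Hilbert polynomials, yielding $P_F/\rk_F=P_E/\rk_E$.

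The content of the theorem lies in the non-invariant case $\mathbf{1}_F=1$. Here I would pass to the $\phi$-invariant hull
\[F^\phi := F + \phi(M\otimes F) + \phi(M\otimes \phi(M\otimes F)) + \cdots \;\subset\; E,\]
the smallest $\phi$-invariant subsheaf of $E$ containing $F$ (a finite sum by Noetherianity, stabilising in at most $\rk_E$ steps). After first replacing $F$ by its saturation in $E$, which alters $h^0(F(k))$ by at most $O(k^{n-1})$ and preserves $\rk_F$, either $F=F^\phi$ (reducing to the invariant case, with an even stronger inequality) or $\rk_{F^\phi}>\rk_F$ (since $E/F$ is then torsion-free and $F^\phi$ strictly extends $F$). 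In the latter situation Gieseker semistability applied to $F^\phi$ gives $\mu(F^\phi)\le\mu(E)$; combined with the observation that each successive piece in the construction of $F^\phi$ is a quotient of $M^{\otimes i}\otimes F$ for some $i\le\rk_E$, this produces a uniform a priori slope bound $\mu_{\max}(F)\le\mu(E)+C'$ for a constant $C'$ depending only on $M$ and $\rk_E$. The classical Le~Potier--Simpson estimate for torsion-free sheaves (cf.\ Huybrechts--Lehn Corollary~3.3.8) then bounds $h^0(F(k))$ in terms of $\rk_F$, $\mu_{\max}(F)$, and $k$; comparing with the asymptotic expansion of $h^0(E(k))/\rk_E$ delivers the required inequality with a universal constant $C$.

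The main technical obstacle is the precise calibration of constants so that the error is genuinely $O(k^{n-1})$ rather than $O(k^n)$: any slope bound on $\mu(F)$ that grew with $F$ would destroy the refinement, so it is essential that the invariant-hull argument bound $\mu_{\max}(F)$ by $\mu(E)$ plus a constant independent of $F$. For the equality statement, saturation of the stated inequality in the non-invariant regime would force each of the above estimates (Gieseker semistability for $F^\phi$, the slope bound coming from the invariant hull, and the $O(k^{n-1})$ saturation correction) to hold simultaneously as equalities, which collapses $F^\phi$ onto $F$ and returns us to the invariant case; the conclusion $P_F/\rk_F=P_E/\rk_E$ then follows from the first case analysis.
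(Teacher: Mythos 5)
There is a genuine gap, and it sits in the case you treat as the easy one. For $\phi$-invariant $F$ the theorem allows no $Ck^{n-1}$ correction, so you must show $h^0(F(k))/\rk_F\le h^0(E(k))/\rk_E$ \emph{exactly}, uniformly in $F$, for a single large $k$. Your argument identifies $h^0$ with the Hilbert polynomial ``for $k$ sufficiently large by Serre vanishing'', but the threshold in Serre vanishing depends on $F$, and the invariant subsheaves of $E$ do not form a bounded family (their slopes can be arbitrarily negative, even after saturation). So you cannot choose one $k$ that works for all of them this way. This is exactly why the classical argument (Huybrechts--Lehn 4.4.1, which the paper follows) splits on slope: for saturated $F$ with $\mu(F)\ge\mu(E)-C_2$ the family \emph{is} bounded, so $k$ can be chosen uniformly, only finitely many Hilbert polynomials occur, and $h^0=P$ holds for all of them at once; for $\mu(F)<\mu(E)-C_2$ one never identifies $h^0(F(k))$ with $P_F(k)$ but instead bounds $h^0(F(k))$ directly by the Le~Potier--Simpson estimate, which gives the strict inequality. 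You invoke Le~Potier--Simpson only in the non-invariant case, which is where it is least needed: for \emph{any} subsheaf one has $\mu_{\max}(F)\le\mu_{\max}(E)$ automatically, so Le~Potier--Simpson (or, in the bounded-slope regime, finiteness of Hilbert polynomials) already yields $h^0(F(k))/\rk_F\le h^0(E(k))/\rk_E+Ck^{n-1}$ with $C$ depending only on $E$ and $L$ -- no invariant hull required.

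Two smaller problems. First, the invariant-hull slope bound is directionally confused: realizing pieces of $F^\phi$ as quotients of $M^{\otimes i}\otimes F$ controls slopes from \emph{below} (via $\mu_{\min}$), not from above, and in any case an upper bound on $\mu_{\max}(F)$ is free from $F\subset E$, so the hull buys nothing here. Second, the equality analysis (``everything collapses onto the invariant case'') inherits the non-uniformity above; in the paper it is instead deduced from the fact that equality forces $F$ into the bounded regime $\mu(F)\ge\mu(E)-C_2$, where one can compare with the saturation ($h^0(F(k))=h^0(F^{sat}(k))$ forces $F=F^{sat}$ since $F^{sat}(k)$ is globally generated) and then read off $P_F/\rk_F=P_E/\rk_E$ from the finite list of Hilbert polynomials. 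Your first-paragraph observation that the invariant case with equality gives $P_F/\rk_F=P_E/\rk_E$ is the right endpoint, but it needs the boundedness step to be legitimate.
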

\begin{proof}
The follows \cite[4.4.1]{Huybrechts1} which is adapted from Simpson \cite{Simpson}.   Let $F\subset E$ and we split into two cases depending on whethere $\mu(F)$ is greater or less than $\mu(E) -C_2$ for some large number $C_2$.

Observe that it is sufficient to assume in \eqref{eq:lpinequality} that $F$ is saturated.  Now, the set of all saturated subsheaves $F$ of $E$ such that $\mu(F)\ge \mu(E) - C_2$ is bounded.  Thus we can take $k$ sufficiently large any such that (1) any such subsheaf is globally generated and has no higher cohomology, and (2) if $P_F$ denotes the Hilbert polynomial of such a sheaf then $P_F(k)/\rk_F \le P_E(k)/\rk_E$ holds if and only if $P_F/\rk_F \le P_E/\rk_E$, and similarly for equality (this is possible as the set of Hilbert-polynomials that appear among sheaves in a bounded family is finite).  Hence for any subsheaf $F$ of this form that is $\phi$-invariant the (semi)stability of $(E,\phi)$ implies \eqref{eq:lpinequality}.  On the other hand, if $F\subset E$ has $\mu(F)\ge \mu(E)-C_2$ and is not invariant under $\phi$ then 
$$ \frac{h^0(F(k))}{\rk_F} - \frac{h^0(E(k))}{\rk_E} = \frac{P_F(k)}{\rk_F} - \frac{P_E(k)}{\rk_E}$$
is a polynomial of order $O(k^{n-1})$.  Since there are only finitely many polynomials appearing in this way, we can choose $C$ so that this quanity is bounded by $Ck^{n-1}$ for all such $F$.  Thus we have proved \eqref{eq:lpinequality} for subsheaves $F\subset E$ with $\mu(F)\ge \mu(E) - C_2$.

For sheaves with $\mu(F)\le \mu(E) - C_2$ we can argue using the Le-Potier Simpson estimate exactly as in \cite[4.4.1]{Huybrechts1} to deduce that the inequality \eqref{eq:lpinequality} always holds strictly.

It remains only to prove the last statement.   So suppose that equality holds in \eqref{eq:lpinequality} for some subsheaf $F\subset E$.  Then from the above we know $\mu(F)\ge \mu(E)-C_2$ and $F$ is invariant under $\phi$.  Furthermore it is clear that equality also holds in \eqref{eq:lpinequality} for the saturation $F^{sat}$.  But then $\mu(F^{sat})\ge \mu(F)\ge \mu(E)-C_2$, and we can choose $k$ large enough so that $F^{sat}(k)$ is globally generated.  Thus since $h^0(F(k)) = h^0(F^{sat}(k))$ we deduce that $F=F^{sat}$ and so $F$ is saturated.  Thus the equality in \eqref{eq:lpinequality} in fact implies $P_F(k)/\rk_F = P_E(k)/\rk_E$, and so again by our choice of $k$ we deduce that $P_F/\rk_F = P_E/\rk_E$ as required.
\end{proof}

\begin{corollary}\label{cor:boundTheta}
Suppose $(E,\phi)$ is (semi)stable and $k$ be sufficiently large.  Then there is a constant $C$ such that for all $U'\subset U$
\[\Theta(U')(\ge) -C\mathbf{1}_{F'} k^{n-1}\]
where $F'$ denotes the saturation of $\rho(U'\otimes L^{-k})$.  Moreover if equality holds then $F'$ is invariant under $\phi$ and $P_F/\rk_F = P_E/\rk_E$.
\end{corollary}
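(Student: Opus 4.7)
The plan is to reduce this statement directly to the LePotier--Simpson estimate of Theorem \ref{thm:lepotier}. The key observation is that for any $U' \subset U = H^0(E(k))$ the inclusion $U' \subset H^0(F'(k))$ holds automatically: by construction, each section in $U'$, regarded as a section of $E(k)$, takes values in $F'(k) \subset E(k)$. Hence $\dim U' \le h^0(F'(k))$, and substituting into the definition $\Theta(U') = \rk_{F'}\dim U - r\dim U'$ (with $r := \rk_E$) gives
\[\Theta(U') \ge \rk_{F'}\, h^0(E(k)) - r\, h^0(F'(k)).\]

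I would then split by the rank of $F'$. The degenerate cases $\rk_{F'} \in \{0,r\}$ force $U'=0$ or $F'=E$ and directly give $\Theta(U')\ge 0$. For $0 < \rk_{F'} < r$, Theorem \ref{thm:lepotier} supplies a uniform constant $C_0$ with
\[\frac{h^0(F'(k))}{\rk_{F'}} \; \le \; \frac{h^0(E(k))}{r} + C_0\, \mathbf{1}_{F'} k^{n-1},\]
and plugging this into the previous inequality yields $\Theta(U') \ge -r C_0\, \rk_{F'}\, \mathbf{1}_{F'} k^{n-1}$. Since $\rk_{F'} \le r-1$ in this range, taking $C := r(r-1)C_0$ produces the asserted uniform bound.

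For the equality clause, suppose $\Theta(U') = -C\mathbf{1}_{F'} k^{n-1}$; the degenerate ranks are immediate, so assume $0 < \rk_{F'} < r$. All three of the inequalities above must then be saturated: $\dim U' = h^0(F'(k))$, equality in the LePotier--Simpson estimate, and $\rk_{F'} = r-1$ whenever $\mathbf{1}_{F'} = 1$. If $\mathbf{1}_{F'} = 1$ the equality statement of Theorem \ref{thm:lepotier} would nevertheless conclude that $F'$ is $\phi$-invariant, contradicting $\mathbf{1}_{F'} = 1$. Hence $\mathbf{1}_{F'} = 0$, giving $\Theta(U') = 0$, and the equality clause of Theorem \ref{thm:lepotier} then furnishes the required slope identity $P_{F'}/\rk_{F'} = P_E/r$.

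No step is genuinely obstructive: the argument is essentially a rearrangement of inequalities imported from Theorem \ref{thm:lepotier}. The one place that needs care is the equality clause, where the constant $C$ must be chosen with enough slack (the factor $r-1$ above) so that the case $\mathbf{1}_{F'}=1$ cannot saturate the bound once confronted with the equality statement of the LePotier--Simpson estimate.
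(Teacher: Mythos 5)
Your proof is correct and is essentially the paper's own argument made explicit: the corollary is deduced from Theorem \ref{thm:lepotier} via the inclusion $U'\subset H^0(F'(k))$ and $\dim U = h^0(E(k))$. The extra care you take with the degenerate ranks, the explicit constant, and the equality clause is a faithful elaboration of the one-line proof in the paper.
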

\begin{proof}
  This follows from the previous Theorem since $U'\subset H^0(F'(k))$ and $\dim U = h^0(E(k))$.
\end{proof}

\begin{remark}
  We have not stated the strongest possible form of Theorem \ref{thm:lepotier}.  In fact using an argument of Nitsure \cite[Prop 3.2]{Nitsure} one can prove that the set of all semistable twisted Higgs bundles of a given topological type is bounded, and using this the argument above needs only a little modification to conclude that in fact $k$ can be chosen uniformly, and also that that the converse is true.     It is almost certainly the case that a much stronger version of Theorem \ref{thm:weakconverse} holds, and that one can make a uniform choice of $k$ (which is what one would need to construct moduli).  We shall not discuss further these stronger statements since we will not need them.
\end{remark}

To analyse $\mu_2$ write the decomposition of $\phi_*\colon H^0(M)\otimes U\to U $ as
$$\phi_* = \oplus_{ab} \phi_*^{ab} \in \bigoplus_{ab} H^0(M)^* \otimes U_b^*\otimes U_a.$$
Thus $\phi_*^{ab}$ is acted on by the one-parameter subgroup with weight $a-b$. 

\begin{lemma}\label{lem:mu2}
  If all $\phi_*^{ab}$ with $a-b\ge 0$ are zero then $\mu_2=0$ and otherwise
\begin{align}\label{eq:mu2}
  \mu_2&= \max \{ a-b : a\ge b \text{ and } \phi_*^{ab}\neq 0\} \\
  &= \max \{ a-b : a\ge  b \text{ and } \phi_*(H^0(M)\otimes U_b) \cap U_a \neq \{0\}\}.\nonumber
\end{align}
\end{lemma}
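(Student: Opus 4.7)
My plan is to compute $\mu_2$ directly via the Hilbert--Mumford formalism by identifying the limit of $[\phi_*, 1]$ in $\overline{Z} = \PP(Z \oplus \CC)$ under $\lambda(t)$ as $t \to \infty$ and reading off the weight at this $\lambda$-fixed limit, in the sign convention already used for $\mu_1$. The initial setup is the weight decomposition of $Z \oplus \CC$ induced by $\lambda$: since $\lambda$ acts trivially on $H^0(M)$ and $U = \bigoplus_n U_n$, we have
\[ Z = H^0(M)^* \otimes U^* \otimes U = \bigoplus_{a,b} H^0(M)^* \otimes U_b^* \otimes U_a \]
with the $(a,b)$-summand of weight $a-b$, and the $\CC$ factor of weight $0$. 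Correspondingly $(\phi_*, 1)$ splits into weight components, with $\phi_*^{ab}$ of weight $a-b$ and the constant $1$ of weight $0$.

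Next I would let $w^*$ denote the largest weight appearing in a non-zero component of $(\phi_*, 1)$; since the constant $1$ always contributes weight $0$,
\[ w^* = \max\bigl(\{a-b : \phi_*^{ab} \neq 0\}\cup\{0\}\bigr). \]
A standard rescaling of $\lambda(t)\cdot(\phi_*, 1)$ by $t^{-w^*}$ and passing to $t \to \infty$ selects the weight-$w^*$ component, giving the $\lambda$-fixed limit
\[ \lim_{t \to \infty} \lambda(t)\cdot[\phi_*, 1] = [(\phi_*, 1)_{w^*}] \in \overline{Z}. \]
In the same convention that produced $\mu_1 = -\sum_n n r_n$ (namely, that $\mu$ equals the largest weight carried by a non-zero component of the vector whose projectivisation is being tested), this immediately gives $\mu_2 = w^*$.

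The two cases in the lemma now follow. If every $\phi_*^{ab}$ with $a \geq b$ vanishes, then each non-zero $\phi_*^{ab}$ has strictly negative weight, so only the constant $1$ contributes at weight $\geq 0$, yielding $w^* = 0$ and hence $\mu_2 = 0$. Otherwise the maximum defining $w^*$ is attained by some $\phi_*^{ab}$ with $a \geq b$, producing the first formula $\mu_2 = \max\{a-b : a \geq b,\ \phi_*^{ab} \neq 0\}$. The second equality in \eqref{eq:mu2} is a reformulation: $\phi_*^{ab} \neq 0$ is equivalent to $\phi_*(H^0(M) \otimes U_b)$ having a non-zero component in the weight-$a$ eigenspace $U_a$ of the decomposition $U = \bigoplus_n U_n$, which is the intended reading of the intersection condition (equivalently, the induced map on graded pieces of $U$ is non-zero).

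The only real subtlety I anticipate is keeping the sign and normalisation conventions for the Hilbert--Mumford weight consistent with those used in the derivation of $\mu_1$; once this is pinned down, the limit computation in $\overline{Z}$ is elementary.
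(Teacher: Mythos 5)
Your argument is correct and is essentially the paper's own proof: both decompose $(\phi_*,1)$ into $\lambda$-weight components of $Z\oplus\CC$ (with the constant $1$ contributing weight $0$), identify the limit in $\overline{Z}$ with the top-weight piece, and read off $\mu_2$ as that maximal weight. Your remark that the intersection condition in \eqref{eq:mu2} should be read as non-vanishing of the induced map onto the graded piece $U_a$ is a sensible clarification of an abuse of notation the paper leaves implicit.
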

\begin{proof}
If all $\phi_*^{ab}$ with $a-b\ge 0$ are zero   the the limit of $[\phi_*.1]\in\overline{Z}$ as $\lambda$ tends to infinity is $[0,1]$ and $\mu_2=0$.  Otherwise let $w$ be the maximum appearing on the right hand side of \eqref{eq:mu2}.  Then the limit as $\lambda$ tends to infinity of $[\phi_*,1]$ is $[\oplus_{a-b=w} \phi_*^{ab},1]$ and so $\mu_2=w$ as claimed.
\end{proof}

Now define $n(1)<\cdots < n(s)$ to be the points at which the sheaves $F_{\le n}$ ``jump'', i.e. 
\[ F_{\le n(i)} = F_{\le n(i) +1}=\cdots  F_{\le n(i+1) -1}\neq F_{\le n(i+1)},\]
and set $\alpha(i) = n(i+1) - n(i)$.  Thus  $\{F_{\le n(i)} : 1\le i\le s\}$ is the set of distinct elements among $\{ F_{\le n}\}$, and so for reasons of rank we see $s\le r=\rk_E$.  Finally set 
\[I=\{ i : F_{\le n(i)} \text{ is not invariant}\}\]
and if $I$ is non-empty let
\[\alpha:=\max_{i\in I} \{ \alpha(i)\}.\]

The information we need about the weight is summarized in the next result.

\begin{proposition}\label{prop:mu2}
The Hilbert-Mumford weight of any one-parameter subgroup with weight spaces $U = \oplus U_n$ is given by 
$$\mu = \mu_1 + \mu_2$$
where
\[\mu_1 = \frac{1}{\dim U} \sum_n \Theta(U_{\le n})\]
and if $F_{\le n(i)}$ is invariant for all $i$ then $\mu_2\ge 0$ and otherwise
  \[\mu_2 \ge \alpha.\]
\end{proposition}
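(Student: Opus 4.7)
The formula for $\mu_1$ is the classical Gieseker calculation summarized in the paragraphs immediately preceding the statement (cf.~\cite[p.~122]{Huybrechts1}), so only $\mu_2$ needs attention. The bound $\mu_2 \geq 0$ is a direct consequence of Lemma~\ref{lem:mu2}: if no $\phi_*^{ab}$ with $a \geq b$ is non-zero then $\mu_2 = 0$, otherwise $\mu_2$ equals a maximum of non-negative integers.

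The heart of the argument is the second inequality. Fix $i^{*} \in I$ realizing $\alpha(i^{*}) = \alpha$. The plan is to produce indices $a \geq n(i^{*}+1)$ and $b \leq n(i^{*})$ with $\phi_*^{ab} \neq 0$; combined with Lemma~\ref{lem:mu2} this immediately gives $\mu_2 \geq a - b \geq \alpha$. I argue by contraposition: assume
$$
\phi_*\bigl(H^0(M) \otimes U_{\leq n(i^{*})}\bigr) \subset U_{\leq n(i^{*}+1)-1},
$$
and apply Lemma~\ref{lem:invariant}(3) with $U_1 = U_{\leq n(i^{*})}$ and $U_2 = U_{\leq n(i^{*}+1)-1}$; writing $G$ and $G'$ for the subsheaves of $E$ generated by $U_1 \otimes L^{-k}$ and $U_2 \otimes L^{-k}$, this yields $\phi(M \otimes G) \subset G'$. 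By the defining property of the jumping points, $G' \subset F_{\leq n(i^{*}+1)-1} = F_{\leq n(i^{*})}$, so $\phi(M \otimes G) \subset F_{\leq n(i^{*})}$.

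The concluding step is the key subtlety: promote this containment to the $\phi$-invariance of the saturated sheaf $F_{\leq n(i^{*})} = G^{\mathrm{sat}}$, which will contradict $i^{*} \in I$. For this, consider the composite $M \otimes F_{\leq n(i^{*})} \xrightarrow{\phi} E \to E/F_{\leq n(i^{*})}$. Its restriction to $M \otimes G$ vanishes by the previous step, so it factors through $M \otimes (F_{\leq n(i^{*})}/G)$, which is torsion; since $E/F_{\leq n(i^{*})}$ is torsion-free, the composite must vanish, giving the desired invariance.

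The main obstacle is precisely this final saturation step. A naive argument at the level of the subsheaves generated by $U_{\leq n(i^{*})} \otimes L^{-k}$ would only yield $a > n(i^{*})$, which is strictly weaker than the required $a \geq n(i^{*}+1)$ whenever additional weights of the one-parameter subgroup occur strictly between $n(i^{*})$ and $n(i^{*}+1)$; extracting the sharp bound forces one to combine the jumping-point definition with the torsion-free quotient argument above.
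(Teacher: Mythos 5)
Your proof is correct and follows essentially the same route as the paper: reduce to the claim that the containment $\phi_*(H^0(M)\otimes U_{\le n(i)})\subset U_{\le n(i+1)-1}$ forces $F_{\le n(i)}$ to be $\phi$-invariant (via Lemma \ref{lem:invariant}(3) applied to the generated subsheaves), and then extract $a\ge n(i+1)$, $b\le n(i)$ with $\phi_*^{ab}\neq 0$ from its failure and apply Lemma \ref{lem:mu2}. The only difference is that you spell out the passage to saturations via the torsion-to-torsion-free vanishing argument, a step the paper asserts in one line; this is a welcome clarification rather than a deviation.
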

\begin{proof}
All that remains to be done is to prove the statement about $\mu_2$.   
We first claim that if
\begin{equation}
 \phi(H^0(M)\otimes U_{\le n(i)}) \subset  U_{\le n(i+1)-1}\label{eq:propmu1}
\end{equation}
then $F_{\le n(i)}$ is invariant under $\phi$.  But this is clear since if \eqref{eq:propmu1} holds and $G_{\le n(i)} := \rho(U_{\le n(i)}\otimes L^{-k})$ then  Lemma \ref{lem:invariant}(1) implies $\phi(M\otimes G_{\le n(i)})\subset G_{\le n(i+1)-1}$ and hence $\phi$ also maps the saturation of $M\otimes G_{\le n(i)}$  (which is $M\otimes F_{\le n(i)}$ to the saturation of $G_{\le n(i+1)-1}$ (which is $F_{\le n(i)}$ proving the claim.

Now we clearly always have $\mu_2\ge 0$.  Suppose that not all the $F_{\le n(i)}$ are invariant under $\phi$.  Observe that by the above claim this implies there must be some $a>b$ such that $\phi_*(H^0(M)\otimes U_b) \cap U_a \neq \{0\}$ so \eqref{eq:mu2} holds.  If $i\in I$ then $F_{\le n(i)}$  is not invariant and thus $\phi(H^0(M)\otimes U_{\le n(i)})$ is not contained in $U_{\le n(i+1)-1}$.  Thus there is an $a\ge n(i+1)$ and $b\le n(i)$ such that $\phi(H^0(M)\otimes U_b) \cap U_a$ is non-zero, and hence by Lemma \ref{lem:mu2}  $\mu_2\ge a-b\ge n(i+1)-n(i)=\alpha(i)$.  Taking the maximum over all $i\in I$ completes the proof.
\end{proof}

\begin{proof}[Proof of Theorem \ref{th:theorem3}]
Let $(E,\phi)$ be (semi)stable and $U=\oplus U_n$ be the weight spaces of a non-trivial one-parameter subgroup.  With the above notation suppose that $F_{\le n(i)}$ is invariant under $\phi$ for all $i$.  Then by Corollary \ref{cor:boundTheta} and the fact that $\mu_2\ge 0$ we have $\mu(\ge)0$.

Suppose instead that at least one of the $F_{\le n(i)}$ is not invariant under $\phi$.  Observe that from construction
\[\sum_n \Theta(U_{\le n}) \ge \sum_{i=1}^s \alpha(i) \Theta(U_{\le n(i)}).\]
Hence using Corollary \ref{cor:boundTheta} and Proposition \ref{prop:mu2} we conclude
\begin{eqnarray*}
 \mu& = &\frac{1}{\dim U} \sum_n \Theta(U_{\le n}) + \epsilon \mu_2\\
&(\ge)& -\frac{1}{\dim U}C k^{n-1} \sum_{i\in I} \alpha(i)  + \epsilon \alpha \\
&\ge& \left(-\frac{Cr k^{n-1}}{\dim U} +  \epsilon \right)\alpha
\end{eqnarray*}
(compare Schmitt \cite[p39]{Schmitt1}).   Hence for $\epsilon > \frac{C rk^{n-1}}{\dim U} = O(k^{-1})$ we have $\mu>0$.  Since this one parameter group was aribtrary, the Hilbert-Mumford criterion thus gives $(\phi_*,T_E)$ is (semi)stable in the sense of Geometric Invariant Theory.

Finally we consider the case that $(E,\phi)$ is polystable.  Then it is semistable so certainly $\mu\ge 0$.  Moreover if $\mu=0$ then $\mu_1=\mu_2=0$ so by the final statement in  Corollary \ref{cor:boundTheta} each $F_{\le n(i)}$ is invariant and $P_{F_{\le n(i)}}/\rk F_{\le n(i)} = P_E/\rk_E$.   But as $(E,\phi)$ is polystable this implies $E\simeq \oplus_n F_n$ and $\phi= \oplus \phi_n$, and so $(\phi_*,T_E)$ is polystable.
\end{proof}

\begin{proof}[Proof of \ref{thm:weakconverse}]
  Suppose that $(E,\phi)$ is not (semi)stable.  Thus there is a subsheaf $F\subset E$ invariant under $\phi$ such that for all $k$ sufficiently large $P_F(k)/\rk_F(\ge)P_E(k)\rk_E$.   Moreover by replacing $F$ by it saturation we may as well assume that $F$ is saturated.    Enlarging $k$ if necessary we may suppose the sheaves $F(k)$ and $E(k)$ are globally generated and without higher cohomology.  Set $U:=H^0(E(k))$, let $U' = H^0(F(k))\subset U$ and $U''$ be a complementary subspace to $U'$.   We can write $U  = \oplus_n U_n$ where
$$ U_n = \left\{
  \begin{array}{ll}
    U' & \text{ if } n = -\dim U''\\
    U'' & \text{ if } n = \dim U'\\
    0 & \text{otherwise}
  \end{array} \right.$$
So by construction the one-parameter subgroup of $GL(U)$ that acts on $U_n$ with weight $n$ factors through $SL(U)$.  Moreover $F_{-\dim U''} = F$ which is invariant under $\phi$ and $F_{\dim U} = E/F$.   By Lemma \ref{lem:invariant}(1) we know that $U' = H^0(F(k))$ is invariant under $\phi_*$, and thus by the first statement in Lemma \ref{lem:mu2} we have $\mu_2=0$.    So by Proposition \ref{prop:mu2} the Mumford-weight is
\begin{align*}
 \mu &= \mu_1 = \frac{1}{\dim U} \Theta(U_{-\dim U''})\\
& = \rk_F \dim U - \rk_E \dim U' = \rk_F h^0(E(k)) - \rk_E h^0(F(k)) \\
&= \rk_F P_E(k) - \rk_E P_F(k)(\le) 0.
\end{align*}

Thus the point $(\phi_{*},T_E)$ is not Hilbert-Mumford (semi)stable.    If $(E,\phi)$ is semistable but not polystable then we can find a subsheaf $F$ that is invariant under $\phi$ with $P_F/\rk_F  = P_E/\rk_E$ but so $E$ is not isomorphic to $F\oplus E/F$ as a twisted Higgs bundle.  Then the Hilbert-Mumford weight of this one-parameter subgroup is zero, but its limit $F\oplus E/F$ is not isomorphic (as a twisted Higgs bundle) to $E$, and so the limits lies outside the orbit of $(\phi_{*},T_E)$.
\end{proof}

\section{Stability and the Existence of a Balanced Metric}\label{sec:balancedstabmerged}

In this section we study the relation between the existence of balanced metrics and Gieseker stability.

\subsection{Necessity of Stability for the Existence of a Balanced Metric}\label{sec:necessity}

As an application of the moment map interpretation of the balanced condition, we next give a direct proof of the fact that stability is a necessary condition for the existence of a balanced metric. A different proof will be given in Section \ref{sec:balancedstab}, using our Geometric Invariant Theory results (Theorem \ref{thm:weakconverse}). We stress that this section is completely independent of \secref{sec:GIT}.  For the proof we need the following.

\begin{lemma}\label{lem:automorphism}
Let $k,l$ be large enough. For any choice of basis $\underline{s}$ of $H^0(E(k))$, there is a natural identification between the group $\Aut (E,\phi)$ and the isotropy group of $f_{\underline{s}} \colon X \to \overline{Z}\times \mathbb{G}$ in $GL(N)$.
\end{lemma}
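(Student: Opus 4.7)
The plan is to treat the two factors of $\overline{Z}\times \mathbb{G}$ separately and then combine. Writing $\Aut(f_{\underline{s}})$ for the isotropy of $f_{\underline{s}}$ in $GL(N)$, and using that $f_{\underline{s}}(x)=(\phi_*,\iota_{\underline{s}}(x))$ with the first coordinate independent of $x$, a $g\in GL(N)$ belongs to $\Aut(f_{\underline{s}})$ iff both $g\cdot \iota_{\underline{s}}(x)=\iota_{\underline{s}}(x)$ for every $x\in X$ and $g\cdot [\phi_*,1]=[\phi_*,1]$ in $\overline{Z}$.

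First I would establish the standard identification of the isotropy of the Grassmannian embedding $\iota_{\underline{s}}\colon X\to \mathbb{G}$ with $\Aut(E)$. For $k$ sufficiently large the evaluation $e_{1,x}\colon U\to E(k)_x$ is surjective for every $x$, so the condition $g\cdot \iota_{\underline{s}}(x)=\iota_{\underline{s}}(x)$ is equivalent to the existence of a unique $T_x\in GL(E(k)_x)$ with $e_{1,x}\circ g^{-1}=T_x\circ e_{1,x}$. Uniqueness together with the holomorphic dependence of $e_{1,x}$ on $x$ forces the $T_x$ to assemble into a holomorphic automorphism $\tilde g\in \Aut(E(k))=\Aut(E)$; conversely any $\tilde g\in \Aut(E)$ produces a pushforward $\tilde g_*\in GL(H^0(E(k)))=GL(N)$ that stabilises every $\iota_{\underline{s}}(x)$. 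Functoriality of pushforward makes this a group isomorphism.

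Next I would translate the remaining condition $g\cdot [\phi_*,1]=[\phi_*,1]$. The $GL(N)$-action on $Z=\Hom(H^0(M)\otimes U,U)$ is the natural one $g\cdot \alpha = g\circ \alpha\circ (\Id_{H^0(M)}\otimes g^{-1})$, and it is trivial on the $\mathbb{C}$ summand of $Z\oplus \mathbb{C}$; consequently $g$ stabilises $[\phi_*,1]$ iff it stabilises $\phi_*$ on the nose, i.e.\ $g\circ \phi_*=\phi_*\circ (\Id\otimes g)$. Setting $g=\tilde g_*$ from the previous step and evaluating this identity at $x\in X$ on sections $s_M\in H^0(M)$ and $s\in H^0(E(k))$ yields $\tilde g(x)\phi(s_M(x)\otimes s(x))=\phi(s_M(x)\otimes \tilde g(x)s(x))$. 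Because $M$ is globally generated and $H^0(E(k))$ generates $E(k)$ for $k\gg 0$, every element of $M_x\otimes E(k)_x$ arises this way, so the pointwise equality is equivalent to the bundle identity $\tilde g\circ \phi=\phi\circ (\Id_M\otimes \tilde g)$, i.e.\ $\tilde g\in \Aut(E,\phi)$. Composing the two identifications gives the desired bijection.

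The main obstacle is the first step, namely verifying cleanly that the fibrewise automorphisms $T_x$ glue into a global holomorphic automorphism and that $\Aut(E(k))=\Aut(E)$; both points are routine but require the surjectivity of the evaluation map, hence the largeness assumption on $k$. The second step is pure bookkeeping via the pushforward-bundle dictionary of Lemma \ref{lem:phiphisharp} together with the global generation of $M$.
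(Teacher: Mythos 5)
Your proposal is correct and follows essentially the same route as the paper: the isotropy of the Grassmannian factor is identified with $\Aut(E)$ (the paper does this by pulling back the universal quotient bundle, which is the same as your gluing of the fibrewise maps $T_x$), and the condition of fixing $\phi_*$ in $\overline{Z}$ is translated into $\tilde g\circ\phi=\phi\circ(\Id_M\otimes\tilde g)$ via the surjectivity of the evaluation map $e_{2,x}$, i.e.\ global generation of $M$ and $k$ large. Your explicit remark that the $GL(N)$-action is trivial on the $\mathbb{C}$ summand of $Z\oplus\mathbb{C}$, so that projective stabilisation of $[\phi_*,1]$ forces genuine stabilisation of $\phi_*$, is a point the paper leaves implicit but is a welcome clarification.
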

\begin{proof}
If $g \in \Aut (E,\phi)$, by action on the basis $\underline{s}$ it clearly defines $g_* \in GL(N)$ fixing $\iota_{\underline{s}}$. Moreover, $\phi \circ (\Id\otimes g) = g \circ \phi$ so it fixes $\phi_*$ and hence $f_{\underline{s}}$ as claimed. The action of $g_*$ on the universal quotient bundle $\mathcal W$ over $\mathbb{G}$ recovers $g$, and hence $g \to g_*$ is an injective homomorphism. On the other direction, if $g_* \in GL(N)$ fixes $f_{\underline{s}}$ its action on $\mathcal W$ induces an automorphism of $E(k) = \iota_{\underline{s}}^*\mathcal W$. This defines an automorphism $g \in \Aut(E) \cong \Aut (E(k))$ which preserves $\phi$. To see this, let $x \in X$ and note that
$$
(\phi \circ \Id\otimes g (t\otimes s))(x) = (\phi(t \otimes g_*s))(x) = (g_* \phi_*(t \otimes s))(x) = (g \circ \phi (t\otimes s))(x)
$$
for any $t \in H^0(M)$ and $s \in H^0(E(k))$, which proves the claim as the evaluation map
\begin{eqnarray*}
 e_{2,x}&\colon& H^0(M) \otimes H^0(E(k)) \to M\otimes E(k).
\end{eqnarray*}
is surjective by assumption on $k$. Then $g_* \to g$ provides an inverse for the previous homomorphism.
\end{proof}

\begin{theorem}\label{thm:balancedstab}
Assume that $M$ is globally generated. If a twisted Higgs bundle $(E,\phi)$ is balanced for all $k$ sufficiently large then it is Gieseker semistable. If in addition is simple then it is Gieseker stable.
\end{theorem}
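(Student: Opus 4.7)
\emph{Plan and setup.} I would prove both statements by a Kempf-Ness style convexity argument for the moment map $\mu$ of Lemma \ref{lem:mup}, applied along a real one-parameter subgroup adapted to a hypothetical destabilising subsheaf. Assuming $(E,\phi)$ is balanced at level $k$ for all $k \gg 0$ but fails to be Gieseker semistable, for every $k$ large there exists a $\phi$-invariant saturated subsheaf $F \subset E$ with $P_F(k)/\rk_F > P_E(k)/\rk_E$. Taking $k$ large enough that $F(k)$ and $E(k)$ are globally generated with vanishing higher cohomology and that Lemma \ref{lem:automorphism} applies, I would set $U = H^0(E(k))$, $U' = H^0(F(k))$, choose a complement $U'' \subset U$, and fix a basis $\underline s$ of $U$ producing the balanced metric. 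Let $H \in i\mathfrak{u}(N)$ be the Hermitian traceless endomorphism equal to $-\dim U''$ on $U'$ and $+\dim U'$ on $U''$, and consider the geodesic curve $f(t) := \exp(tH) \cdot f_{\underline s}$ in $\cS$.

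\emph{Convexity and the limits of $\Psi$.} I would set
\begin{equation*}
\Psi(t) := \langle \mu(f(t)),\, iH \rangle,
\end{equation*}
and use the moment map equation together with the positivity of $\Omega$ to show that $\Psi$ is non-decreasing (the standard Kempf-Ness calculation identifies $\Psi'(t)$ with the squared $\Omega$-norm of the infinitesimal action of $iH$). The balanced condition $\mu(f_{\underline s}) = -\tfrac{i\chi}{2}\Id$ combined with $\tr H = 0$ gives $\Psi(0) = 0$. For $\Psi(+\infty)$ the $\phi$-invariance of $F$ and Lemma \ref{lem:invariant}(1) yield $\phi_*(H^0(M)\otimes U') \subset U'$, so $\exp(tH)$ conjugates $\phi_*$ by a bounded one-parameter family and the $\overline Z$-contribution to $\Psi(+\infty)$ vanishes. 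For the $\mathbb G$-contribution I would track the limit of $\exp(tH)\cdot\iota_{\underline s}(x)$ via the explicit form of the Fubini-Study moment map on $\mathbb G$, integrate over $(X,\omega^n/n!)$, and aim to identify
\begin{equation*}
\Psi(+\infty) = c \bigl( \rk_F\, P_E(k) - \rk_E\, P_F(k) \bigr)
\end{equation*}
for some positive constant $c$. For destabilising $F$ this is strictly negative for $k \gg 0$, contradicting $\Psi(0) = 0$ and monotonicity, and yielding Gieseker semistability.

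\emph{Stability from simplicity and the main obstacle.} For the second assertion I would apply the same construction, now to a proper $\phi$-invariant subsheaf $F$ with $P_F/\rk_F = P_E/\rk_E$ provided by the failure of stability. Then $\Psi(+\infty) = 0$, and combined with $\Psi(0) = 0$ and monotonicity this forces $\Psi \equiv 0$. Vanishing of $\Psi'$ means the infinitesimal action of $iH$ at $f(t)$ vanishes for every $t$, so $\exp(tH)$ fixes $f_{\underline s}$ for all $t \in \RR$. By Lemma \ref{lem:automorphism} this produces a one-parameter subgroup of $\Aut(E,\phi)$, and since $H$ is not a scalar multiple of $\Id$ (because $0 \subsetneq U' \subsetneq U$) this contradicts $\Aut(E,\phi) = \CC^*$. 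The main obstacle will be the computation of $\Psi(+\infty)$ as the claimed multiple of $\rk_F P_E(k) - \rk_E P_F(k)$: one must carry out the Kempf-Ness limit of the Fubini-Study moment map on $\mathbb G$ along $\exp(tH)$, control convergence after integration over $X$, and read off the Hilbert polynomial expression, all carried out here independently of the Geometric Invariant Theory framework of Section \ref{sec:GIT}.
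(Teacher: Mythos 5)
Your proposal is correct and follows essentially the same route as the paper's proof: a Kempf--Ness monotonicity argument for the moment map of Lemma \ref{lem:mup} along the one-parameter subgroup determined by $H^0(F(k))$ and its orthogonal complement in the balanced metric, with the $\overline{Z}$-contribution killed by $\phi$-invariance (via the vanishing of the block $E_{21}$) and simplicity plus Lemma \ref{lem:automorphism} giving strictness. The limit computation you defer is carried out in the paper by degenerating $(E,\underline{s})$ to $(F\oplus E/F,\underline{s}'\oplus\underline{s}'')$ via the $\CC^*$-action on $\operatorname{Ext}^1$, which yields exactly your claimed positive multiple of $\rk_F h^0(E(k))-\rk_E h^0(F(k))$.
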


\begin{proof}
Let $\underline{s}$ be a balanced basis at level $k$. By Corollary \ref{cor:balancedmmap} $\underline{s}$ is such that $\mu(f_{\underline{s}}) = 0$ where $\mu$ is the $SU(N_k)$ moment map induced by \eqref{lem:mup}. Therefore, given $\zeta \in\mathfrak{su}(N_k)$ we have
\begin{equation}\label{eq:weightineq}
w(\underline{s},\lambda):= \lim_{t \to +\infty} \langle \mu(e^{i\zeta}f_{\underline s}),\zeta\rangle = \int_0^\infty |Y_{\zeta|e^{i\zeta}f_{\underline s}}|^2ds \geq 0,
\end{equation}
where $t\in\RR$ and $Y_{\zeta|e^{i\zeta} f_{\underline s}}$ denotes the infinitesimal action of $\zeta$ on $f_{\underline s}$ in the space of embeddings $\mathcal{S}$. Moreover equality holds only if $i\zeta$ is an infinitesimal automorphism of $f_{\underline s}$ and hence by Lemma \ref{lem:automorphism} this is excluded when $(E,\phi)$ simple. 

The proof follows now evaluating the \emph{maximal weight} $w(\underline{s},\lambda)$ on a special $1$-parameter subgroup, constructed as follows (which is similar to the one considered in the proof of Theorem \ref{thm:weakconverse}). Let $F \subset E$ an coherent subsheaf 
and consider the vector space $H = H^0(F(k))$. Consider the orthogonal complement
\begin{equation}\label{eq:splitting}
H^0(E(k)) \cong_{\underline{s}} \CC^{N_k} = H \oplus H^\perp
\end{equation}
given by the balanced metric on $H^0(E(k))$.
Set $$\nu:=\frac{h^0(F(k))}{h^0(E(k)) - h^0(F(k))}$$
and define a one parameter subgroup
\begin{equation}
\lambda\colon \CC^* \to SL(N_k,\CC) \colon t \to \lambda(t) = \left\{
  \begin{array}{ll}
   t & \text{ on } H\\
   t^{-\nu} & \text{ on } H^\perp
  \end{array}
\right.
\end{equation}
with generator of the $U(1) \subset \CC^*$-action given by
\begin{equation}
\zeta = i \left(
  \begin{array}{ll}
   \Id_H & 0 \\
   0 & -\nu\Id_{H^\perp}
  \end{array}
\right).
\end{equation}
Considering the natural action of $\CC^*$ on $\operatorname{Ext}^1(F,E/F)$ and pulling back the universal extension we obtain a $\CC^*$-equivariant family of framed coherent sheaves
\begin{equation}
  \CC^* \curvearrowright \begin{array}{ll}
   (\mathcal{E}, \underline{\vartheta}) \\
   \; \;\;\; \downarrow \\
   X \times \CC^*
  \end{array}
\end{equation}
flat over $\CC^*$, with general fibre isomorphic to $(E(k),\underline{s})$ and central fiber 
$$
(\mathcal{E}, \underline{\vartheta})_ 0 \cong (F(k) \oplus E/F(k),\underline{s}' \oplus \underline{s}'')
$$
for suitable basis $\underline{s}'$ of $H^0(F(k))$ and $\underline{s}''$ of $H^0(E/F(k))$.   Morever the induced $\CC^*$-action on this central fibre respects the splitting and $\lambda$ acts as $(\lambda,1)$.  By flatness, we can regarding $\underline{\vartheta}$ as a family of smooth sections on the smooth complex vector bundle underlying $E$. Then, by universality of the family $(\mathcal{E}, \underline{\vartheta})$ and continuity of the integral (cf. argument in \cite[Theorem 26]{W3})
$$
w(\underline{s},\lambda) = w_1 + \frac{\delta}{2\chi} w_2
$$
where
\begin{align*}
w_1 & = -\frac{i}{2}\tr_{H} \(\int_X (s'_j,s'_l)_{f_{\underline{s}'}^*h_{FS}} \omega^{n}\) \zeta_{|H} -\frac{i}{2}\tr_{H^\perp} \(\int_X (s''_j,s''_l)_{f_{\underline{s}''}^*h_{FS}} \omega^{n}\) \zeta_{|H^\perp}\\
& = \frac{\Vol(X)}{2}\(\rk_F - (\rk_E - \rk_F)\nu\)\\
& = \frac{\Vol(X)\rk_E\rk_F}{2(h^0(E(k)) - h^0(F(k)))}\(\frac{h^0(E(k))}{\rk_E} - \frac{h^0(F(k)}{\rk_F}\),\\
w_2 & = i\lim_{t \to +\infty}\tr_{\CC^{N_k}} \(\frac{\phi_* e^{-2it\zeta}(\phi_*)^*e^{2it\zeta} - e^{-2it\zeta}(\phi_*)^{*}e^{2it\zeta}\phi_*}{1 + \tr e^{-2it\zeta}(\phi_*)^{*}e^{2it\zeta}\phi_*}\)\zeta\\
\end{align*}

Consider now the orthogonal splitting of $V = H^0(M) \otimes H^0(E(k))$ induced by \eqref{eq:splitting}
$$
V = V' \oplus V^{'\perp} = (H^0(M) \otimes H) \oplus (H^0(M) \otimes H^\perp).
$$
and the corresponding block decomposition of $\phi_*$ and $(\phi_*)^*$
$$
(\phi_*)^* = \left(
  \begin{array}{ll}
   E_{11}^* & E_{21}^* \\
   E_{12}^* & E_{22}^*
  \end{array}
\right), \qquad \textrm{for} \; \phi_* = \left(
  \begin{array}{ll}
   E_{11} & E_{12} \\
   E_{21} & E_{22}
  \end{array}
\right),
$$
where $E_{11} \colon V' \to H$ and similarly for the other blocks $E_{12}, E_{21}, E_{22}$. The adjoints in the previous expression are taken with respect to the induced metrics on the direct sum decompositions of $V$ and $U$. Now, a direct calculation shows that
\begin{align*}
-i \tr_{\CC^{N_k}} \(\phi_* e^{-2it\zeta}(\phi_*)^*e^{2it\zeta}\)\zeta & = \vertiii{E_{11}}^2 - \nu \vertiii{E_{22}}^2 + e^{-2(1+\nu)t}\vertiii{E_{12}}^2 - \nu e^{2(1+\nu)t}\vertiii{E_{21}}^2\\
-i \tr_{\CC^{N_k}} \(e^{-2it\zeta}(\phi_*)^{*}e^{2it\zeta}\phi_*\)\zeta & = \vertiii{E_{11}}^2 - \nu \vertiii{E_{22}}^2 -\nu e^{-2(1+\nu)t}\vertiii{E_{12}}^2 + e^{2(1+\nu)t}\vertiii{E_{21}}^2\\
\tr_{\CC^{N_k}} \(e^{-2it\zeta}(\phi_*)^{*}e^{2it\zeta}\phi_*\)& = \vertiii{E_{11}}^2 + \vertiii{E_{22}}^2 + e^{-2(1+\nu)t}\vertiii{E_{12}}^2 + e^{2(1+\nu)t}\vertiii{E_{21}}^2
\end{align*}
where $\vertiii{E_{ij}}^2 := \tr \(E_{ij}^*E_{ij}\)$ and therefore
\begin{equation}\label{eq:w2}
\begin{split}
w_2 & 
 = \lim_{t \to +\infty} \frac{(1+ \nu)\(e^{2(1+\nu)t}\vertiii{E_{21}}^2 - e^{-2(1+\nu)t}\vertiii{E_{12}}^2\)}{1 + \vertiii{E_{11}}^2 + \vertiii{E_{22}}^2 + e^{-2(1+\nu)t}\vertiii{E_{12}}^2 + e^{2(1+\nu)t}\vertiii{E_{21}}^2}.
\end{split}
\end{equation}

Assuming now that $F \subset E$ is invariant under $\phi$ we have $E_{21} = 0$, and since $\nu > 0$ for $k$ large
$$
w_2 = \lim_{t \to +\infty} \frac{-(1+ \nu)e^{-2(1+\nu)t}\vertiii{E_{12}}^2}{1 + \vertiii{E_{11}}^2 + \vertiii{E_{22}}^2 + e^{-2(1+\nu)t}\vertiii{E_{12}}^2} = 0.
$$
Then, the formula for $w_1 = w(\underline{s},\lambda)$ jointly with \eqref{eq:weightineq} gives $\frac{h^0(E(k))}{\rk_E} \geq \frac{h^0(F(k)}{\rk_F}$ with strict inequality when $(E,\phi)$ is simple.
\end{proof}

\begin{remark}
Note that \eqref{eq:w2} implies that the inequality \eqref{eq:lpinequality} holds for any coherent subsheaf $F \subset E$ provided that $(E,\phi)$ is balanced for all $k$ sufficiently large.
\end{remark}

\begin{remark}
The previous proof is based on \cite[Theorem 26]{W3}, which itself relies on the original application of the notion of balanced embedding used to characterize asymptotic Chow stability of a polarised manifold \cite{Luo}.
\end{remark}

\subsection{Characterization of the balanced condition}\label{sec:balancedstab}

We now prove Theorem \ref{thm:giesekerbalanced} in the introduction, namely that for a twisted Higgs bundle $(E,\phi)$ the existence of a balanced metric is equivalent to Gieseker-polystability.

Let $Z = \Hom (H^0(M)\otimes \CC^{N_k},\CC^{N_k})$ be endowed with hermitian metric as in \secref{subsec:mmap}. Let $\|\cdot\|_{\Hom_k}$ be a Hermitian metric on
$$
\Hom_k = \Hom (\Lambda^r H^0(E(k)),H^0(\det(E(k)))).
$$
Let $\omega_{\overline{Z}} \in c_1(\cO_{\overline{Z}}(1))$ and $\omega_\PP \in c_1(\cO_\PP(1))$ be the associated curvature forms and for $\epsilon = \delta/\chi$ consider the semi-positive K\"ahler metric on $\overline{Z} \times \PP$
$$
\omega_\delta = \omega_\PP + \epsilon \omega_{\overline{Z}},
$$
Associated to $\omega_\delta$ we have the Kempf-Ness functional $L'\colon SL(N_k)/SU(N_k) \to \mathbb R$ given by
$$
L'([e^{i\zeta}]) = \frac{1}{4}\log \frac{\|e^{i\zeta}T_E\|^2_{\Hom_k}}{\|T_E\|^2_{\Hom_k}} + \frac{\delta}{4\chi} \log \frac{\|e^{i\zeta}\phi_*\|_Z^2}{\|\phi_*\|^2_Z}
$$ 
for $\zeta \in \mathfrak{su}(N_k)$. 
We wish to compare $L'$ with \emph{the integral of the moment map} \cite{MR}  which defines the balanced condition, i.e. with the function $L\colon SL(N_k)/SU(N_k) \to \mathbb R$ given by
$$
L([e^{i\zeta}]) = \int_0^1 \langle\mu(e^{is\zeta}f_{\underline{s}}),\zeta\rangle ds.
$$
for $\mu$ as in \eqref{eq:Omegasigmammap} and $\zeta \in \mathfrak{su}(N_k)$. Recall that $L$ is convex along geodesics $t \to [g e^{it\zeta}]$ on the symmetric space $SL(N_k)/SU(N_k)$ and its critical points correspond to zeros of the moment map $\mu$ on the $SL(N_k)$ orbit of $f_{\underline{s}}$, i.e. to balanced basis. Following \cite{PhSt}, we make the following choice of metric in $\Hom_k$, that we shall denote $\|\cdot\|_{PS}$. Take a basis $\{\tau^\mu\}$ of $H^0(\det(E(k))$ and define a metric for $a = (a_{i_1,\ldots,i_{\rk_E}}^\mu) \in \Hom_k$ by
$$
\log \|a\|_{PH}^2 = \int_X \log \sum |a_{i_1,\ldots,i_{\rk_E}}^\mu\tau^\mu(x)|^2_{h_0} \omega^n,
$$
for a choice of Hermitian metric $h_0$ on $\det(E(k)$. Note that for a different choice of $h_0$ the quantity $\|a\|_{PS}$ differs by a constant and hence $\|\cdot\|_{PS}$ is canonical up to rescaling. 

\begin{lemma}\label{lem:L=L}
\begin{align*}
L & = \frac{1}{4} \int_X \log \frac{\sum_{i_1 < \ldots <i_{r_E}} |e^{i\zeta}s_{i_1} \wedge \ldots \wedge e^{i\zeta} s_{i_{r_E}}|^2_{h_0}}{\sum_{i_1 < \ldots <i_{r_E}} |s_{i_1} \wedge \ldots \wedge s_{i_{r_E}}|^2_{h_0}}\omega^n + \frac{\delta}{\chi}\int_0^1 \langle\mu_{\overline{Z}}(e^{is\zeta}\phi_*),\zeta\rangle ds = L'
\end{align*}
\end{lemma}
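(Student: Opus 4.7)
The plan is to split $L$ into its Grassmannian and $\overline{Z}$ contributions using the decomposition of the moment map in Lemma \ref{lem:mup}, and then apply the pointwise Kempf-Ness identity to each piece. Writing $\mu = \mu^{(G)} + \frac{\delta}{\chi \Vol(X)}\mu^{(\overline{Z})}$, where $\mu^{(G)}(f_{\underline{s}}) = \int_X f_{\underline{s}}^*\mu_G\, \omega^n/n!$ and $\mu^{(\overline{Z})}$ denotes the pulled-back $\overline{Z}$-part, linearity in the integrand of $L$ produces the splitting $L = L_G + L_{\overline{Z}}$. Since $\phi_*$ and hence $e^{is\zeta}\phi_*$ does not depend on $x \in X$, the function $\mu_{\overline{Z}}(e^{is\zeta}\phi_*)$ is constant on $X$, and integrating against $\omega^n/n!$ produces a factor of $\Vol(X)$ that cancels the one in the coefficient of $\mu^{(\overline{Z})}$. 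This gives precisely the second summand $\frac{\delta}{\chi}\int_0^1 \langle \mu_{\overline{Z}}(e^{is\zeta}\phi_*), \zeta\rangle\, ds$ of the middle expression.

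For the Grassmannian piece $L_G$, I would use the standard Kempf-Ness formula on $\mathbb G = \mathbb G(\CC^{N_k}; \rk_E)$. Under the Pl\"ucker embedding $\mathbb G \hookrightarrow \mathbb P(\Lambda^{\rk_E} \CC^{N_k})$ and the standard Hermitian lift to $\cO(1)$, one has for each $x \in X$
$$\int_0^1 \langle \mu_G(e^{is\zeta}\iota_{\underline{s}}(x)), \zeta\rangle\, ds = \frac{1}{4} \log \frac{\sum_{i_1 < \cdots < i_{\rk_E}} |e^{i\zeta}s_{i_1}\wedge\cdots\wedge e^{i\zeta}s_{i_{\rk_E}}|^2_{h_0}}{\sum_{i_1 < \cdots < i_{\rk_E}} |s_{i_1}\wedge\cdots\wedge s_{i_{\rk_E}}|^2_{h_0}}(x),$$
after identifying the Pl\"ucker fibre at $\iota_{\underline{s}}(x)$ with $(\det E(k))_x$ endowed with $h_0$. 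The factor $\frac{1}{4}$ is the universal Kempf-Ness constant arising from the convention $\mu([v])\cdot\zeta = -\frac{i}{2}v^*\zeta v/|v|^2$ used in Lemma \ref{lem:mup}. Interchanging the orders of the $s$- and $x$-integrations in $L_G$ and inserting this pointwise identity produces the first summand of the middle expression, completing the first equality.

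The second equality is essentially tautological once these identities are unravelled. The first summand is $\frac{1}{4}\log(\|e^{i\zeta}T_E\|^2_{PS}/\|T_E\|^2_{PS})$ by the very definition of the Phong-Sturm norm $\|\cdot\|_{PS}$, because the induced $SL(N_k)$-action on $T_E$ sends $e_{i_1}\wedge\cdots\wedge e_{i_{\rk_E}}$ to $e^{i\zeta}s_{i_1}\wedge\cdots\wedge e^{i\zeta}s_{i_{\rk_E}}$ through $T_E$, so the Grassmannian integrand matches term-by-term with the ratio defining $\log\|\cdot\|_{PS}^2$. The second summand is converted by applying the same Kempf-Ness identity to $\overline{Z} = \mathbb P(Z \oplus \CC)$ with its natural $\cO(1)$-lift: along the geodesic $s \mapsto e^{is\zeta}\phi_*$ one obtains $\int_0^1 \langle\mu_{\overline{Z}}(e^{is\zeta}\phi_*), \zeta\rangle\, ds = \frac{1}{4}\log(\|e^{i\zeta}\phi_*\|_Z^2/\|\phi_*\|_Z^2)$, which when multiplied by $\frac{\delta}{\chi}$ gives the second term of $L'$.

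The main care needed is the consistent bookkeeping of normalization constants: the factor $\frac{1}{4}$ and the signs of the moment maps must be matched with the $\frac{i}{2}$'s appearing in Lemma \ref{lem:mup}; the $SL(N_k)$-action on $Z = \Hom(H^0(M)\otimes U, U)$ is by conjugation and must interact correctly with the Frobenius norm $\vertiii{\cdot}$ used for $\|\cdot\|_Z$; and the cancellation of $\Vol(X)$ in the $\overline{Z}$-piece must be exact. Once these conventions are aligned, the identity $L = L'$ follows immediately from the pointwise Kempf-Ness principle applied in the two factors of $\overline{Z}\times \mathbb G$.
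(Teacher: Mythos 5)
Your proposal is correct and follows essentially the same route as the paper: the paper's proof simply cites Wang's identity (eq.\ (10) of \cite{W2}) for the Grassmannian/Pl\"ucker piece, cites Phong--Sturm's Theorem 2 for matching the first summand with $\log\|\cdot\|_{PS}$, and invokes the potential $\log(1+\|\cdot\|_Z^2)$ for $\omega_{\overline Z}$ on $Z\subset\overline Z$ for the twisting piece — all of which you rederive explicitly, including the $\Vol(X)$ cancellation coming from the constancy of $\mu_{\overline Z}(e^{is\zeta}\phi_*)$ over $X$. The only cosmetic discrepancy is that the Kempf--Ness integral on $\overline Z$ strictly yields $\tfrac14\log\frac{1+\|e^{i\zeta}\phi_*\|_Z^2}{1+\|\phi_*\|_Z^2}$, but this matches the paper's own (slightly abusive) notation for $\|\cdot\|_Z$ in the definition of $L'$, so it is not a gap in your argument.
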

\begin{proof}
The first equality follows from \cite[eq. (10)]{W2}. The equality $L = L'$ follows from the choice of metric  on $\Hom_k$ \cite[Theorem 2]{PhSt}, combined with the fact that $\omega_{\overline{Z}}$ admits the potential $\log(1 + \|\cdot\|^2_Z)$ on $Z \subset \overline{Z}$.
\end{proof}


By direct application of the Kemp-Ness Theorem \cite{KN} we now obtain the following.

\begin{lemma}\label{lem:balancedGITstab}
Let $(E,\phi)$ be a twisted Higgs bundle and assume that $M$ is globally generated. Then there is a $k_0>0$ such that for all $k \ge k_0$ the following holds: $(E,\phi)$ is balanced at level $k$ if and only if $(\phi_*,T_E)\in \overline{Z}\times \mathbb P$ is polystable with respect to the natural linearisation of $SL(N_k)$ on $\mathcal O_{\overline{Z}}(\epsilon) \otimes \mathcal O_{\mathbb P}(1)$.
\end{lemma}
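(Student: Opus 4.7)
The plan is a direct application of the Kempf-Ness theorem, using Lemma~\ref{lem:L=L} to bridge the moment-map picture and the GIT picture.

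First I would fix $k_0$ large enough that for all $k \ge k_0$ the bundle $E(k)$ is globally generated and has no higher cohomology. This ensures that for any basis $\underline{s}$ of $H^0(E(k))$ the embedding $\iota_{\underline{s}}\colon X \to \mathbb{G}$ and the quantized Higgs field $\phi_* \in Z$ are well-defined, and that the bijection $\phi \leftrightarrow \phi_*$ of Lemma~\ref{lem:phiphisharp} holds. These are the only restrictions on $k$ required; the rest of the argument is purely finite-dimensional.

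Next, by Corollary~\ref{cor:balancedmmap}, combined with the trace-shift that passes from the $U(N_k)$-moment map of Lemma~\ref{lem:mup} to the associated $\mathfrak{su}(N_k)$-moment map, $(E,\phi)$ is balanced at level $k$ if and only if the $SL(N_k)$-orbit of $f_{\underline{s}}$ in $\cS$ contains a zero of the $SU(N_k)$-moment map. By convexity of $L$ along geodesic rays $t \mapsto [g e^{it\zeta}]$ in the symmetric space $SL(N_k)/SU(N_k)$, this is equivalent to $L$ attaining a critical point on the orbit. Lemma~\ref{lem:L=L} then identifies $L$ with the Kempf-Ness functional $L'$ for the natural $SL(N_k)$-linearisation on $\mathcal{O}_{\overline{Z}}(\epsilon) \otimes \mathcal{O}_{\mathbb{P}}(1)$, which is ample since $\epsilon = \delta/\chi > 0$.

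The conclusion then follows from the classical Kempf-Ness theorem applied to this ample $SL(N_k)$-linearisation on $\overline{Z}\times \mathbb{P}$: the orbit of $(\phi_*, T_E)$ contains a zero of the moment map if and only if it is closed in the semistable locus, which is by definition GIT-polystability. The substantive work has already been carried out in Lemma~\ref{lem:L=L} via the identification $L = L'$ with the Phong-Sturm choice of metric on $\Hom_k$; what remains is essentially bookkeeping, namely verifying that the shift between the $U(N_k)$- and $SU(N_k)$-moment maps lines up with the normalisation of $P$ in \eqref{eq:Pdef}, and that the moment-map picture on the space $\cS$ of embeddings matches the $SL(N_k)$-orbit picture on $\overline{Z}\times \mathbb{P}$ through the map $\underline{s} \mapsto (\phi_*, T_E)$.
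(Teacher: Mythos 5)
Your proposal is correct and follows essentially the same route as the paper: the paper's proof is exactly the combination of the Kempf--Ness theorem (closedness of the orbit of a lift of $(\phi_*,T_E)$ is equivalent to the Kempf--Ness functional $L'$ attaining a minimum) with Lemma~\ref{lem:L=L} identifying $L'$ with the integral of the moment map $L$, whose critical points are the balanced bases. The extra bookkeeping you flag --- the trace normalisation passing between the $U(N_k)$- and $SU(N_k)$-moment maps, and convexity of $L$ along geodesics ensuring critical points are minima --- is left implicit in the paper but is exactly the right glue.
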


\begin{proof}
By the Kempf-Ness Theorem \cite[Theorem 0.2]{KN}, the $SL(N)$-orbit of a lift of $(\phi_*,T_E)$ to $\mathcal O_{\overline{Z}}(\epsilon) \otimes \mathcal O_{\mathbb P}(1)$ is closed if and only if 
$L'$ attains a minimum. By Lemma \ref{lem:L=L} we have $L= L'$, so critical points of $L'$ correspond to balanced basis.
\end{proof}

As a straightforward consequence of this result, combined with Theorem \ref{th:theorem3} and Theorem \ref{thm:weakconverse}, 
we obtain the desired characterization of the balanced condition in Theorem \ref{thm:giesekerbalanced}.

\section{Asymptotics of the Balanced Condition}\label{sec:asymptoticP}

We next start our study of the asymptotic behaviour of the balanced condition as $k$ gets large.   Fix a twisted Higgs bundle $(E,\phi\colon M\otimes E\to E)$ with globally generated $M$ and fix also a hermitian metric $H_M$ on $M$ which induces an $L^2$-metric on $H^0(M)$.

 Our goal will be to understand the asymptotics of the endomorphism
\begin{equation}\label{eq:definitionPexpansion}
 P =P_k= \chi^{-1}\left(\Id + \frac{\delta}{1+\vertiii{\phi_*}^2}[\phi_*,(\phi_*)^{*}]\right)
\end{equation}
of $H^0(E(k))$  that appears in the definition balanced condition \eqref{def:balanced}.    We recall that $\chi$ is a topological constant strictly of order $O(k^n)$, that $\delta = O(k^{n-1})$ and that $P$ depends on  choice of metric $\|\cdot\|=\|\cdot\|_k$ on $H^0(E(k))$ that is used to define $(\phi_*)^*$.   The aim is to give an asymptotic expansion of $P$ in powers of $k$ which of course can only be done under some assumptions on the chosen metrics.

\begin{definition}\label{def:weakgeom}
We say a sequence $\|\cdot\|= \|\cdot\|_k$ of metrics on $H^0(E(k))$ is \emph{weakly geometric} (with respect to $\phi$) if there is a constant $c'>0$ such that
\begin{align}\label{eq:weakgeom}
\frac{c'}{\rk_E}k^n  &\le \vertiii{\phi_*}^2\\
\|\phi_*\| &\le c', \label{eq:boundalpha}
\end{align}
where the operator norm and Frobenius norm are those induced by $\|\cdot\|$.  
\end{definition}

\begin{remark}
  We shall justify this terminology in an appendix, by showing that if
  $\|\cdot\|$ is a \emph{geometric} sequence (by which we mean it is the
  sequence of $L^2$-metrics induced by some hermitian metric on $E$),
  then it is weakly geometric.
\end{remark}

The weakly geometric hypothesis implies that $\chi P$ is close to the identity; in fact since $\delta= O(k^{n-1})$ it implies
$$ \chi P - \Id = O(1/k).$$

To get a stronger statement we need a further hypothesis.  Recall $\|\cdot\|_k$ on $H^0(E(k))$ determines a Fubini-Study metric $H_{FS,k}$ on $E$, which in turn induces an $L^2$-metric on $H^0(E(k))$ that we shall denote by $\|\cdot\|'=\|\cdot\|'_k$.  We shall assume that 
\begin{equation}
(\cdot,\cdot) = (P\cdot,\cdot)' \quad \text{ for all }k\label{eq:balancedhyp}
\end{equation}
which we recall is one half of the balanced condition appearing in Proposition \ref{prop:balancedchar}.  The main result of this section is the following:

\begin{theorem}\label{thm:asymptoticexpansionD}
Suppose that the sequence $\|\cdot\|_k$ is weakly geometric and that \eqref{eq:balancedhyp} holds and that the sequence of hermitian metrics $H_{FS,k}$ on $E$ are bounded independent of $k$.     Then there exist endomorphisms $A_j=A_{jk}\colon H^0(E(k))\to H^0(E(k))$ such that
\begin{enumerate}
\item For each $j$ the the operator $A_{j}$ is bounded uniformly over $k$ taken in the operator norm induced by $\|\cdot\|'$.
\item For any $N\ge 0$ there is an asymptotic expansion
$$ \chi P = \sum_{j=0}^N k^{-j} A_{j}  + O(k^{-N-1}),$$
where the error is in the operator norm induced by $\|\cdot\|'$. 
\item We may take $A_0=\Id$ and $A_1 = \frac{\delta k}{1+ \vertiii{\phi_*}^2}[\phi_*,(\phi_*)^*]$, where $(\phi_*)^*$ is the adjoint taken with respect to $\|\cdot\|_k$.
\end{enumerate}
\end{theorem}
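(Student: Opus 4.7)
The plan is to start from the observation that, by the very definition \eqref{eq:definitionPexpansion} of $P$, the identity
\[
\chi P \;=\; \Id + \frac{\delta}{1+\vertiii{\phi_*}^2}[\phi_*,(\phi_*)^*] \;=\; A_0 + k^{-1}A_1
\]
holds \emph{exactly} with $A_0 = \Id$ and $A_1$ as in (3). Consequently the asymptotic expansion in (2) can be satisfied trivially by setting $A_j = 0$ for all $j\ge 2$, and the theorem reduces to establishing uniform bounds on $A_0$ and $A_1$ in the $\|\cdot\|'$-operator norm. As $A_0 = \Id$ is unit in every operator norm, the only substantive task is a uniform bound on $A_1$.

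First I would use the weakly geometric hypothesis to bound $A_1$ in the $\|\cdot\|$-operator norm. The lower bound $\vertiii{\phi_*}^2 \ge \tfrac{c'}{\rk_E} k^n$ from \eqref{eq:weakgeom}, combined with $\delta = O(k^{n-1})$, shows that the scalar $\tfrac{\delta k}{1+\vertiii{\phi_*}^2}$ is $O(1)$. The commutator $[\phi_*,(\phi_*)^*]$ is bounded in the $\|\cdot\|$-operator norm by $2\|\phi_*\|^2 \le 2(c')^2$, using \eqref{eq:boundalpha}. Hence $\|A_1\|_{\mathrm{op},\|\cdot\|} = O(1)$.

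The crux is to translate this estimate into a bound with respect to $\|\cdot\|'$. The balanced hypothesis $(\cdot,\cdot) = (P\cdot,\cdot)'$ yields $\|v\| = \|P^{1/2}v\|'$ and hence the conjugation identity
\[
\|A\|_{\mathrm{op},\|\cdot\|'} \;=\; \|P^{-1/2}\,A\,P^{1/2}\|_{\mathrm{op},\|\cdot\|}
\]
for every endomorphism $A$. Writing $P = \chi^{-1}(\Id + R)$ with $R = \chi P - \Id$ of $\|\cdot\|$-operator norm $O(k^{-1})$ by the previous paragraph, the scalar factor $\chi^{\pm 1/2}$ cancels in the conjugation, giving $P^{-1/2}A_1 P^{1/2} = (\Id + R)^{-1/2} A_1 (\Id+R)^{1/2}$. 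Since $R$ is a small self-adjoint perturbation of $\Id$ for large $k$, the analytic square roots $(\Id+R)^{\pm 1/2}$ expand as $\Id + O(k^{-1})$ in $\|\cdot\|$-operator norm, so $P^{-1/2}A_1 P^{1/2} = A_1 + O(k^{-1})$. Combined with the previous bound this yields $\|A_1\|_{\mathrm{op},\|\cdot\|'} = O(1)$, as required.

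The main obstacle is the book-keeping in this conjugation argument: individually $P^{\pm 1/2}$ has $\|\cdot\|$-operator norm of order $\chi^{\mp 1/2} = O(k^{\mp n/2})$, and one must verify that the $\chi$-scalings cancel exactly in the product $P^{-1/2}A_1 P^{1/2}$, leaving only the mild $O(k^{-1})$ correction coming from $R$. The standing hypothesis that $H_{FS,k}$ is uniformly bounded does not enter the argument directly, but is needed to ensure that $\|\cdot\|'$ remains non-degenerate along the sequence, so that uniform $\|\cdot\|'$-operator norm bounds are genuinely meaningful.
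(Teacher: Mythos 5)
Your construction does verify the statement as literally written: by the definition \eqref{eq:definitionPexpansion} of $P$ the identity $\chi P = \Id + k^{-1}A_1$ holds \emph{exactly} with $A_1 = \frac{\delta k}{1+\vertiii{\phi_*}^2}[\phi_*,(\phi_*)^*]$ (adjoint in $\|\cdot\|_k$), and your norm-transfer argument is sound. In fact it can be shortened: $[\phi_*,(\phi_*)^*]$ is $\|\cdot\|_k$-self-adjoint and $P=\chi^{-1}(\Id+k^{-1}A_1)$ is a function of $A_1$, so $P^{\pm 1/2}$ commute with $A_1$ and the conjugation identity gives $\|A_1\|'=\|A_1\|=O(1)$ with no error term at all.

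The problem is that this choice of $A_j$ trivializes the theorem and misses the content that is actually used downstream. In the proof of Theorem \ref{thm:converge} the expansion is invoked to conclude $\|P-P_1\|=\chi^{-1}O(k^{-2})$ where $P_1=\chi^{-1}(\Id+\epsilon k^{-1}[\phi_*,\beta])$ and $\beta=(\phi_*)^{*_{H_k}}$ is the adjoint with respect to the \emph{induced $L^2$-metric} $\|\cdot\|'$; only that adjoint has geometric meaning, via $[\phi_*,(\phi_*)^{*_{H_k}}]s'_l=[\phi,\Pi_k\phi^{*}]s'_l$ and the H\"ormander estimate, whereas the $\|\cdot\|_k$-adjoint does not. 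Rewritten in terms of $\beta$, the defining equation for $P$ becomes the \emph{implicit} fixed-point equation $\chi P=\Id+\frac{\epsilon}{k}[\phi_*,P\beta P^{-1}]$ (since $(\phi_*)^*=P\beta P^{-1}$, Lemma \ref{lem:synthesis1}), and extracting from it an expansion whose coefficients are universal expressions in $\phi_*$ and $\beta$, with errors controlled at every order, is precisely the recursive construction of Theorem \ref{thm:asymptoticexpansion2} ($A_0=\Id$, $A_1=\epsilon[\alpha,\beta]$, $A_2=\epsilon^2[\alpha,[[\alpha,\beta],\beta]]$, and so on). This is also where the hypothesis that $H_{FS,k}$ is bounded genuinely enters — it yields $\|\beta\|'=\|\phi_*\|'=O(1)$ (Lemma \ref{lem:synthesis2}), needed to bound the higher coefficients — so your observation that this hypothesis ``does not enter directly'' is a symptom that your $A_j$ are not the intended ones. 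No step of yours fails, but to make the theorem serve its purpose you would still need to show $[\phi_*,(\phi_*)^*]=[\phi_*,\beta]+O(k^{-1})$ (which follows from $(\phi_*)^*-\beta=[P,\beta]P^{-1}=O(k^{-1})$) and, for all higher-order terms, essentially reproduce the paper's recursion.
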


\subsection{Linear algebra Conventions}\label{sec:convention}

Before moving on we discuss in detail some useful conventions and abuses of notation.     We will be interested in linear maps
$$\alpha\colon U\otimes H\to U$$
where $U,H$ are finite dimensional vector spaces.  Moreover $H$ will come with a fixed metric (which we recall always means a metric induced by a hermitian inner product), that will not change in the discussion, and $U$ with a metric $\|\cdot\|_U$.    These together induce a metric on $U\otimes H$ which depends on both of these metrics, but we will denote it simply by $\|\cdot\|_U$.   The operator norm of $\alpha$ is then given by
$$ \|\alpha\|_U =\sup_{0\neq \zeta\in V\otimes H} \left\{ \frac{\|\alpha(\zeta)\|_U}{\|\zeta\|_U}\right\}.$$

We shall denote the induced linear map $U\to U\otimes H^*$ also by $\alpha$, and observe that the operator norm $\|\alpha\|_U$ is unaffected by this abuse of notation.

The metrics on $U$ and $H$ provide two adjoint maps associated to $\alpha$,  namely maps $U\to U\otimes H$ and the map $U\otimes H^*\to U$, and we denote both of these simply by $\alpha^*$.  Observe the identity $\|\alpha^*\|_U=\|\alpha\|_U$ holds, irrespective of which of the two possibilities is being considered for either side of the identity.  The Frobenius-norm of $\alpha$ will denoted by $\vertiii{\alpha}^2 = \tr(\alpha \alpha^*) =\sum_i \|\alpha(u_i)\|^2_U$ where $\{u_i\}$ is any orthonormal basis for $U\otimes H$. 

Now given two linear maps $\alpha,\beta\colon U\otimes H\to U$ we let
\begin{equation}
[\alpha,\beta] =\alpha \beta - \beta \alpha\label{eq:commutator}
\end{equation}
which will always considered as an element of $\End(U)$.    Thus the two instances of $\alpha$ (resp. $\beta$) in the right hand side of \eqref{eq:commutator} are denoting different linear maps.  As a last abuse of notation, if $A\in \End(U)$ we will denote the induce element in $\End(U\otimes H)$ obtained by tensoring with the identity also by $A$ (again, this change of view does not change the norm of $A$).  Thus if $A,B\in \End(U)$ the quantity
$$ [\alpha,A\beta B] = \alpha A \beta B  - A\beta B \alpha$$
is a well-defined element of $\End(U)$.

\subsection{Asymptotic Expansion}

The theorem we want will follow from a statement about sequences of operators on hermitian vector spaces.
To make this self-contained, suppose $V_k$ is a sequence of finite-dimensional vector spaces with metrics $\|\cdot\|'=\|\cdot\|'_k$.    Assume that $\alpha=\alpha_k\colon V_k\to V_k$ is a linear map and set $\beta =\alpha^*$.  Assume also that
\begin{equation}
 \| \beta_k\|'_k = \| \alpha_k\|'_k = O(k^0).\label{eq:formalboundalpha}
\end{equation}
Also let $\chi= \chi_k$ be a sequence of real number that is strictly $O(k^n)$ and $\epsilon = \epsilon_k$ be a sequence of real numbers with $\epsilon_k=O(k^0)$.\medskip

Now for fixed $k$ define operators $A_j=A_{jk}$ and $B_j=B_{jk}$ on $V_k$ recursively by
\begin{align*}
  A_0 &= B_0 =\Id\\
  A_{j+1} &= \epsilon \sum_{i=0}^j  \left[\alpha, A_i\beta B_{j-i}\right]\\
  B_{j+1} &= -\sum_{i=0}^{j} B_i A_{j+1-i}.
\end{align*}

Thus the $A_j$ are universal quantities that depend on $\alpha,\beta$ and $\epsilon$.  The first few are given by
\begin{align*}
  A_0 &= \Id\\
  A_1 &= \epsilon [\alpha, \beta]\\
  A_2 &= \epsilon^2 [\alpha,[[\alpha,\beta],\beta]]\\
  A_3 &= \epsilon^3 [\alpha,[[\alpha,[[\alpha,\beta],\beta]],\beta]-
  [\alpha,\beta]\beta[\alpha,\beta] + \beta[\alpha,\beta]^2].
\end{align*}

Fix a constant $C>0$ and consider the ball
$$ B=B_k := \{ Q\in  \End(V_k) : \| \chi Q - \Id \|' \le Ck^{-1}\}.$$

\begin{theorem}\label{thm:asymptoticexpansion2}
Assume that $C$ is sufficiently large.  Then 
\begin{enumerate}
\item For each $j$ the norm of $A_j$ is bounded independent of $k$.
\item Suppose a sequence $P_k\in B_k$ satisfies the equation
$$  P_k = \chi^{-1} (\Id + \frac{\epsilon}{k} [\alpha, P_k \beta P_k^{-1}]).$$
Then for any given $N\ge 1$ we have
$$ \chi P_k = \sum_{j=0}^N k^{-j} A_j  + O(k^{-N-1})$$
where the error term is taken in the operator norm induced by $\|\cdot\|'$.
\end{enumerate}
\end{theorem}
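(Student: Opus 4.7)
The plan is to recognise the fixed-point structure behind the recursion. Set $F(Q) = \Id + \tfrac{\epsilon}{k}[\alpha, Q\beta Q^{-1}]$, so that the hypothesis on $P_k$ becomes $Q = F(Q)$ for $Q := \chi P_k$, and write $Q_N := \sum_{j=0}^N k^{-j}A_j$ and $\tilde B_N := \sum_{j=0}^N k^{-j} B_j$. A direct check of the recursion for $B_j$ shows that $B_0 A_0 = \Id$ and $\sum_{i=0}^m B_i A_{m-i} = 0$ for every $m\ge 1$, so $\tilde B_N Q_N = \Id + O(k^{-N-1})$ as a polynomial in $k^{-1}$ truncated at order $N$. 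A similar matching of coefficients in the recursion for $A_{m+1}$ will show that $Q_N$ is a near-fixed-point of $F$, namely $F(Q_N) = Q_N + O(k^{-N-1})$. The proof will then be completed by deducing $Q = Q_N + O(k^{-N-1})$ from these two facts via a contraction argument using that $F$ is Lipschitz with constant $O(1/k)$ on $B_k$.

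First I would prove part (1) by a straightforward induction on $j$: since $\|\alpha\|' = \|\beta\|' = O(1)$ and $\epsilon = O(1)$, the recursion $A_{j+1} = \epsilon\sum_{i=0}^j [\alpha, A_i\beta B_{j-i}]$ and the analogous one for $B_{j+1}$ immediately propagate a uniform bound in $\|\cdot\|'$ from stage $j$ to stage $j+1$. Consequently, for each fixed $N$ and $k$ sufficiently large one has $\|Q_N - \Id\|' = O(k^{-1})$, so $Q_N$ is invertible with $\|Q_N^{-1}\|' = O(1)$; multiplying $\tilde B_N Q_N = \Id + O(k^{-N-1})$ on the right by $Q_N^{-1}$ then yields $Q_N^{-1} = \tilde B_N + O(k^{-N-1})$.

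To establish $F(Q_N) = Q_N + O(k^{-N-1})$, I would substitute $Q_N^{-1} = \tilde B_N + O(k^{-N-1})$ to get $\tfrac{\epsilon}{k}[\alpha, Q_N\beta Q_N^{-1}] = \tfrac{\epsilon}{k}[\alpha, Q_N\beta\tilde B_N] + O(k^{-N-2})$, and then expand $Q_N\beta\tilde B_N = \sum_{m=0}^{2N} k^{-m}\sum_{i+l=m,\,0\le i,l\le N}A_i\beta B_l$. For $m\le N-1$ the constraint $i,l\le N$ is vacuous and the inner sum equals $A_{m+1}/\epsilon$ by definition, so these terms contribute exactly $\sum_{j=1}^N k^{-j}A_j = Q_N - \Id$, while the terms $m\ge N$ are of order $k^{-N-1}$ or smaller. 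Finally, on $B_k$ the map $F$ is Lipschitz in $\|\cdot\|'$ with constant $O(1/k)$: for $Q, Q' \in B_k$ the identity $Q^{-1} - (Q')^{-1} = Q^{-1}(Q'-Q)(Q')^{-1}$ combined with the uniform bounds $\|Q^{-1}\|', \|(Q')^{-1}\|' = O(1)$ (valid once $C$ is chosen so that $Ck^{-1} \le 1/2$ for $k$ large) and the explicit $\tfrac{\epsilon}{k}$ prefactor in $F$ give this. Writing $Q - Q_N = (F(Q) - F(Q_N)) + (F(Q_N) - Q_N)$ and applying the Lipschitz bound together with the estimate just proved yields $\|Q - Q_N\|' \le \tfrac{c}{k}\|Q - Q_N\|' + O(k^{-N-1})$, which rearranges to the desired $\|Q - Q_N\|' = O(k^{-N-1})$.

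The main obstacle I expect is the combinatorial bookkeeping in the middle step: one must ensure that the remainders generated by truncating both the series for $Q_N^{-1}$ and the double sum for $Q_N\beta\tilde B_N$ are tracked uniformly in $k$, and that they combine correctly with the recursive identities defining the $A_j$. All such errors ultimately trace back to the estimate $\|Q_N - \Id\|' = O(k^{-1})$ and Neumann series control for inverting operators of the form $\Id + O(k^{-1})$, and hence to part (1).
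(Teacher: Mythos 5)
Your proposal is correct and follows essentially the same route as the paper: uniform bounds on the $A_j,B_j$ by induction, verification that the truncated sum is a near-fixed-point of the map $Q\mapsto \Id+\tfrac{\epsilon}{k}[\alpha,Q\beta Q^{-1}]$ via the telescoping identities $\sum_i B_iA_{m-i}=0$, and a contraction/Lipschitz estimate with constant $O(1/k)$ to compare $P_k$ with the truncation (the paper phrases the Lipschitz bound as a derivative estimate plus the mean value theorem, which is equivalent to your resolvent-identity argument). The only nit is the phrase ``the inner sum equals $A_{m+1}/\epsilon$'' --- it is $[\alpha,\text{inner sum}]$ that equals $A_{m+1}/\epsilon$ --- but your subsequent conclusion uses the correct expression.
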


\begin{remark}
It may help the reader to consider the following formal argument that explains why Theorem \ref{thm:asymptoticexpansion2} holds.  Consider the series
$$ P:= \chi^{-1} (A_0 + \frac{A_1}{k} + \frac{A_2}{k} + \cdots).$$
We treat the above merely as a formal expansion, and no convergence is implied.     We claim that $P$ satisfies the equation
$$P = \chi^{-1}\left(\Id + \frac{\epsilon}{k} [\alpha, P\beta P^{-1}] \right).$$
To see this observe that by construction $A_0B_0=Id$ and if $j\ge 1$ then $ \sum_{i=0}^j B_i A_{j-i} = 0$.  Hence $\chi P(B_0 + B_1/k + \cdots)=\Id$ so formally $P^{-1} = \chi(B_0 + B_1k^{-1} + \cdots)$.  Thus
\begin{align*}
  \frac{\epsilon}{k} [\alpha, P\beta P^{-1}]&= \frac{\epsilon}{k} [\alpha, \sum_{j\ge 0} k^{-j}A_j \beta \sum_{j\ge 0} k^{-j} B_j]=\frac{\epsilon}{k} [\alpha,\sum_{j\ge 0} k^{-j} \sum_{i=0}^j A_i \beta B_{j-i}]\\
&=\sum_{j\ge 0} k^{-j-1} \epsilon [\alpha, \sum_{i=0}^j A_i \beta B_{j-i}]=\sum_{j\ge 0} k^{-j-1} A_{j+1}\\
&=\chi P-\Id.
\end{align*}
\end{remark}

Of course we have no reason to expect that the above series converges, and thus the actual proof of Theorem \ref{thm:asymptoticexpansion2} is a little longer.  We start with the uniform bounds on the operators $A_j$.

\begin{lemma}\label{lem:boundnormAB}
 The norms $\|A_j\|'$  and $\|B_j\|'$ are bounded independent of $k$; that is there exist constants $C_j$ such that
$$\|A_j\|' \le C_j  \text{ and } \|B_j\|' \le C_j  \quad \text{ for all } k.$$
\end{lemma}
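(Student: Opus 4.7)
The plan is to prove the two bounds simultaneously by induction on $j$, exploiting the fact that the operator norm is submultiplicative and that the commutator can be controlled by $\|[\alpha, X]\|' \le 2\|\alpha\|' \|X\|'$. The base case is trivial since $A_0 = B_0 = \Id$, so we may take $C_0 = 1$.

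For the inductive step, assume bounds $\|A_i\|' \le C_i$ and $\|B_i\|' \le C_i$ (independent of $k$) have been established for all $i \le j$. Applying the triangle inequality and submultiplicativity to the recursive definition of $A_{j+1}$, I would estimate
\begin{equation*}
\|A_{j+1}\|' \;\le\; |\epsilon|\sum_{i=0}^{j} \|[\alpha, A_i \beta B_{j-i}]\|' \;\le\; 2|\epsilon|\,\|\alpha\|'\,\|\beta\|' \sum_{i=0}^{j} C_i C_{j-i}.
\end{equation*}
Since the hypotheses \eqref{eq:formalboundalpha} and the standing assumption $\epsilon = O(k^0)$ give uniform bounds on $\|\alpha\|'$, $\|\beta\|'$ and $|\epsilon|$ independent of $k$, the right hand side is bounded by a constant depending only on $j$ and the previously obtained $C_i$. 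I would therefore set
\begin{equation*}
C_{j+1}' := 2|\epsilon|\,\|\alpha\|'\,\|\beta\|' \sum_{i=0}^{j} C_i C_{j-i},
\end{equation*}
which is a valid uniform bound for $\|A_{j+1}\|'$.

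For $B_{j+1}$ I would use the same strategy: by the defining recursion,
\begin{equation*}
\|B_{j+1}\|' \;\le\; \sum_{i=0}^{j} \|B_i\|'\,\|A_{j+1-i}\|' \;\le\; \sum_{i=0}^{j} C_i \max\{C_{j+1-i},\,C_{j+1-i}'\},
\end{equation*}
which is again independent of $k$. Setting $C_{j+1}$ equal to the maximum of this bound and $C_{j+1}'$ completes the induction and yields the desired uniform constants for both sequences.

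There is no real obstacle: the argument is purely formal once one uses the elementary operator-norm inequalities and the uniform bounds on $\|\alpha\|'$, $\|\beta\|'$, $|\epsilon|$ guaranteed by the hypotheses. The only point to be vigilant about is the convention from \secref{sec:convention} regarding the commutator in the presence of a tensor factor, but since the operator norm is unaffected by the identification $U \otimes H \to U \otimes H^*$ via the fixed metric on $H$, the estimate $\|[\alpha, X]\|' \le 2\|\alpha\|'\|X\|'$ holds verbatim in either reading.
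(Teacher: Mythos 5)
Your proof is correct and follows essentially the same route as the paper: induction on $j$ via the recursive formulas, using submultiplicativity of the operator norm, the commutator estimate $\|[\alpha,X]\|'\le 2\|\alpha\|'\|X\|'$, and the uniform bounds on $\|\alpha\|'$, $\|\beta\|'$ and $\epsilon$ from \eqref{eq:formalboundalpha}. The paper's version is merely terser (it writes $\|\alpha\|'\|\beta\|'=\|\alpha\|'^2$ since $\beta=\alpha^*$, and dismisses the $B_j$ case as identical), so no further comment is needed.
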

\begin{proof}
This is immediate from the recursive formula as  
\begin{align*}
  \|A_{j+1}\|' &= \epsilon \|[\alpha, \sum_{i=0}^j A_i\beta
  B_{j-i}]\|'\le 2 \epsilon \sum_{i=0}^j \|\alpha\|'\| A_i\|'
  \|\beta\|'\|B_{j-i}\|'\\&= 2\epsilon \|\alpha\|'^2 \sum_{i=0}^j C_i
  C_{j-i}.
\end{align*}
Thus the bound we want follows as $\|\alpha\|'= O(k^0)$.  The argument for $B_j$ is the same. 
\end{proof}

Now fix some integer $N\ge 0$ and choose a constant $C=C_N$ large enough so
$$ C\ge  \sum_{j=1}^N C_j'$$
where $C_j$ as in Lemma \ref{lem:boundnormAB}.

\begin{lemma}\label{lem:boundinverse}
  If $Q\in B_k =  \{ Q\in  \End(V_k): \| \chi Q - \Id \|' \le Ck^{-1}\}$ then $Q$ is invertible and for all $k$ sufficiently large
$$\|Q\|'\le 2\chi^{-1} \text{ and } \|Q^{-1}\|'\le 2\chi.$$
\end{lemma}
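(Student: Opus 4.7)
The plan is to view $Q$ as a small perturbation of $\chi^{-1}\Id$ and invert via a Neumann series. Writing $\chi Q = \Id + R$ with $R = \chi Q - \Id$, the hypothesis $Q \in B_k$ says precisely that $\|R\|' \le Ck^{-1}$. Since the operator norm of the identity is $1$, the triangle inequality immediately gives $\|\chi Q\|' \le 1 + Ck^{-1}$, which for $k$ large enough (so that $Ck^{-1} \le 1$) yields $\|Q\|' \le 2\chi^{-1}$.

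For the bound on $Q^{-1}$, I would choose $k$ large enough that $Ck^{-1} \le 1/2$, so that $\|R\|' \le 1/2 < 1$. Then $\chi Q = \Id + R$ is invertible in $\End(V_k)$ by the standard Neumann series
$$(\chi Q)^{-1} = \sum_{j=0}^\infty (-R)^j,$$
which converges in operator norm since $\|R\|' < 1$. In particular $Q$ itself is invertible, and the sum is bounded by the geometric series
$$\|(\chi Q)^{-1}\|' \le \sum_{j=0}^\infty \|R\|'^{\,j} \le \sum_{j=0}^\infty 2^{-j} = 2,$$
which rearranges to $\|Q^{-1}\|' \le 2\chi$.

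There is no real obstacle here; the only subtlety is ensuring $k$ is large enough that $Ck^{-1} < 1$ so that the Neumann series converges, and this is built into the statement by the phrase ``for all $k$ sufficiently large''. The constants $2$ appearing in both bounds are simply the geometric series sum $\sum 2^{-j}$; one could sharpen them to any factor strictly greater than $1$ by further enlarging $k$, but the value $2$ is convenient for later use.
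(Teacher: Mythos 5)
Your argument is correct and follows essentially the same route as the paper: a triangle inequality for the first bound, and a Neumann series for $(\Id+R)^{-1}$ with $\|R\|'\le 1/2$ for the second. The only cosmetic difference is that the paper bounds $\|Q\|'$ via $\|Q-\chi^{-1}\Id\|'+\chi^{-1}$ rather than $\chi^{-1}\|\chi Q\|'$, which is the same estimate.
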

\begin{proof}
  For $k$ large we have $Ck^{-1}\le 1/2$ so $\|Q\|'\le \|Q-\chi^{-1}\Id\|' + \chi^{-1}\|\Id\|' \le 2\chi^{-1}$.  On the other hand $\chi Q = \Id + R$ where $\|R\|' \le Ck^{-1} \le 1/2$, so $\chi^{-1}Q^{-1} = \sum_{n\ge 0} R^n$ exists and $\|\chi^{-1}Q^{-1}\|\le 2$.
\end{proof}

Next set
$$ R_k = \chi^{-1}\sum_{j=0}^N k^{-j} A_j.$$
We will prove that $R_k$ is asymptotically close to satisfying the same defining equation as $P_k$, and use this fact to show that $R_k$ and $P_k$ are themselves asymptotically close.

\begin{lemma}\label{lem:boundnormDN}
We have
$$ R_k \in B_k\text{ for all }k.$$
Moreover the inverse of $R_k$ satisfies
\begin{equation}
  \label{eq:inverseDN}
\chi^{-1} R_k^{-1} =    B_0 + \cdots + k^{-N} B_N + O(k^{-N-1}).
\end{equation}
\end{lemma}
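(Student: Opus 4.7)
The plan is to split the proof into the two statements and treat both as essentially algebraic consequences of the recursive definitions, using only Lemma \ref{lem:boundnormAB} (uniform bounds on $\|A_j\|'$ and $\|B_j\|'$) and Lemma \ref{lem:boundinverse} (bounds for elements of $B_k$). No hard analysis is required.

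For statement (1), I write $\chi R_k - \Id = \sum_{j=1}^{N} k^{-j} A_j$ directly from the definition of $R_k$ (recall $A_0 = \Id$). Then for $k$ large enough that $k^{-j} \le k^{-1}$ for $1 \le j \le N$, Lemma \ref{lem:boundnormAB} gives
\[
\|\chi R_k - \Id\|' \le \sum_{j=1}^{N} k^{-j} \|A_j\|' \le k^{-1}\sum_{j=1}^{N} C_j \le Ck^{-1},
\]
which is the required membership $R_k \in B_k$, since $C$ was chosen so that $C \ge \sum_{j=1}^{N} C_j$.

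For statement (2), the key observation is that the recursion $B_{j+1} = -\sum_{i=0}^{j} B_i A_{j+1-i}$ is designed precisely so that $\sum_{i=0}^{m} B_i A_{m-i} = \delta_{m,0}\Id$. Setting $T_N := \sum_{j=0}^{N} k^{-j} B_j$, I form the Cauchy product
\[
T_N \cdot (\chi R_k) = \sum_{m=0}^{2N} k^{-m} \!\!\sum_{\substack{i+j=m \\ 0 \le i,j \le N}}\!\! B_i A_j.
\]
For $0 \le m \le N$ the inner sum is unrestricted and collapses to $\Id$ (for $m=0$) or $0$ (for $1 \le m \le N$) by the recursion; for $N < m \le 2N$ the inner sum is only partial and does not vanish, but Lemma \ref{lem:boundnormAB} bounds each such inner sum in operator norm by a constant $K_m$ depending only on $N$. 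Hence $T_N\cdot (\chi R_k) = \Id + E$ where $\|E\|' \le \sum_{m=N+1}^{2N} k^{-m}K_m = O(k^{-N-1})$.

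To convert this into the expansion of $R_k^{-1}$, I use statement (1) together with Lemma \ref{lem:boundinverse}, which guarantees that $R_k$ is invertible with $\|R_k^{-1}\|' \le 2\chi$. Multiplying $T_N \chi R_k = \Id + E$ on the right by $R_k^{-1}$ yields $\chi T_N = R_k^{-1} + E R_k^{-1}$, whence
\[
\chi^{-1} R_k^{-1} = T_N - \chi^{-1} E R_k^{-1},
\]
and the error satisfies $\|\chi^{-1} E R_k^{-1}\|' \le \chi^{-1}\|E\|'\|R_k^{-1}\|' \le 2\|E\|' = O(k^{-N-1})$. This is exactly the expansion \eqref{eq:inverseDN}.

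There is no serious obstacle here; the work is purely formal bookkeeping, and the only point requiring a moment of care is the \emph{ordering} in the Cauchy product: the recursion encodes $\sum B_i A_{m-i}=\delta_{m,0}\Id$, not $\sum A_j B_{m-j}$, so one must multiply $T_N$ on the left of $\chi R_k$ (not the right) in order for the telescoping to occur.
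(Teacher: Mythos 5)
Your proof is correct and follows essentially the same route as the paper: the first part is the identical direct estimate, and your Cauchy-product computation plus right-multiplication by $R_k^{-1}$ (using $\|R_k^{-1}\|'\le 2\chi$ from Lemma \ref{lem:boundinverse}) is exactly what the paper does, except that the paper phrases the final inversion step as an application of the mean value theorem to the linear map $g^{-1}(Q)=\chi^{-1}QR_k^{-1}$, which for a linear map is the same as your direct multiplication. Your explicit remark about the ordering in the product (the recursion gives $\sum_i B_iA_{m-i}=\delta_{m,0}\Id$, so one must put the $B$'s on the left) is a worthwhile clarification that the paper leaves implicit.
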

\begin{proof}
The first statement is clear from the definition as 
$$ \|R_k - \chi^{-1}Id\|' \le \chi^{-1} \sum_{j=1}^N k^{-j} \|A_j\|'\le \chi^{-1} k^{-1} \sum_{j=1}^N C_j \le \chi^{-1} k^{-1} C.$$
For the second statement let $g(Q) = \chi Q R_k$.  Then $g^{-1}(Q) = \chi^{-1} Q R_k^{-1}$ which gives $\|Dg^{-1}|_Q\|\le \chi^{-1} \|R_k^{-1}\| \le 2$.  Moreover $g(\chi^{-1}R_k^{-1}) = \Id$ and $g(\sum_{j=0}^N k^{-j}B_j) = \Id + O(k^{-N-1})$.  Thus $\|\chi^{-1}R_k - \sum_{j=0}^N k^{-j} B_k \| = \|g^{-1}(\Id) + g^{-1}(\Id + O(k^{-N-1})\| = O(k^{-N-1})$ by the Mean value theorem applied to $g^{-1}$.
\end{proof}

\begin{lemma}\label{lem:DNfixedpoint}
$$R_k = \chi^{-1}\left(\Id + \frac{\epsilon}{k}[\alpha,R_k\beta R_k^{-1}] + O(k^{-N-1})\right)$$
where the error term is taken in the operator norm induced by $\|\cdot\|'$.
\end{lemma}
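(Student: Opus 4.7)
The strategy is to substitute $\chi R_k = \sum_{j=0}^{N} k^{-j} A_j$ together with the asymptotic expansion of $R_k^{-1}$ provided by Lemma \ref{lem:boundnormDN} into the right hand side, expand out, and match powers of $k$. The recursion defining the $A_j$ is tailored precisely so that this matching reproduces the same truncated series, leaving behind only the $(N+1)$-st term of the formal series and a smaller remainder. This is the rigorous counterpart of the formal series manipulation in the remark following Theorem \ref{thm:asymptoticexpansion2}.

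First I would write, via Lemma \ref{lem:boundnormDN}, $R_k^{-1} = \chi \sum_{j=0}^N k^{-j} B_j + F$ where $\|\chi^{-1} F\|' = O(k^{-N-1})$, and then compute $R_k\beta R_k^{-1}$. The critical point is that the scalar factor $\chi^{-1}$ coming from $R_k$ cancels the factor $\chi$ in the leading part of $R_k^{-1}$, so that the tail contribution $\chi^{-1}(\sum_i k^{-i} A_i)\,\beta\, F$ stays at $O(k^{-N-1})$ rather than being amplified by $\chi = O(k^n)$. Using the uniform operator norm bounds $\|A_i\|', \|B_j\|' \le C_{i}, C_{j}$ from Lemma \ref{lem:boundnormAB} together with $\|\beta\|' = O(1)$ from \eqref{eq:formalboundalpha}, the cross-terms with $i+j>N$ also contribute $O(k^{-N-1})$ in operator norm, yielding
$$R_k\beta R_k^{-1} = \sum_{m=0}^N k^{-m} \sum_{i=0}^m A_i \beta B_{m-i} + O(k^{-N-1}).$$

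Next I would take the commutator with $\alpha$ and multiply by $\epsilon/k$; since $\|\alpha\|' = O(1)$ by \eqref{eq:formalboundalpha} the error is preserved (and in fact gains a further factor $1/k$). Invoking the defining recursion $A_{m+1} = \epsilon [\alpha, \sum_{i=0}^m A_i \beta B_{m-i}]$ rewrites each grouped summand as $k^{-(m+1)} A_{m+1}$, producing
$$\Id + \frac{\epsilon}{k}[\alpha, R_k \beta R_k^{-1}] = \sum_{j=0}^{N+1} k^{-j} A_j + O(k^{-N-2}) = \chi R_k + k^{-(N+1)} A_{N+1} + O(k^{-N-2}).$$
The trailing summand $k^{-(N+1)} A_{N+1}$ is $O(k^{-N-1})$ by Lemma \ref{lem:boundnormAB}, so dividing by $\chi$ gives the claimed identity with error $O(k^{-N-1})$ inside the parentheses.

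The main obstacle, such as it is, is purely bookkeeping: one must verify that the scalar factor $\chi$ in the approximate inverse cancels cleanly against the $\chi^{-1}$ in $R_k$ so that the tail $F$ does not get amplified to $O(k^{n-N-1})$, and then track that every discarded cross-term (together with the leftover $(N+1)$-st term of the formal expansion) sits at order exactly $O(k^{-N-1})$. Both points follow mechanically from the uniform norm bounds in Lemma \ref{lem:boundnormAB} and the standing hypothesis \eqref{eq:formalboundalpha}, so no genuinely new estimate is required beyond what Lemmas \ref{lem:boundnormAB}--\ref{lem:boundnormDN} already supply.
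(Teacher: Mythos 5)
Your proposal is correct and follows essentially the same route as the paper: substitute the expansions of $\chi R_k$ and $\chi^{-1}R_k^{-1}$ from Lemma \ref{lem:boundnormDN}, use the uniform bounds of Lemma \ref{lem:boundnormAB} and \eqref{eq:formalboundalpha} to discard the cross-terms of order beyond $k^{-N}$, and invoke the recursion to resum the remaining terms into $\chi R_k$ plus an $O(k^{-N-1})$ leftover. The only (cosmetic) difference is bookkeeping: the paper truncates the recursion by setting $A_j=B_j=0$ for $j>N$ and collects the high-order terms into a single remainder $\Delta$, whereas you absorb those cross-terms into the error already at the level of $R_k\beta R_k^{-1}$ and keep the single term $k^{-(N+1)}A_{N+1}$ explicit.
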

\begin{proof}
For convenience, for this proof we redefine the $A_i,B_i$ by declaring that $A_0 = B_0 =\Id,$ 
$$ A_{j+1} = \epsilon [\alpha, \sum_{i=0}^j A_i\beta B_{j-i}] \quad \text{for }1\le j\le N-1$$
$$ B_{j+1} = -\sum_{i=0}^{j} B_i A_{j+1-i} \quad \text{for }1\le j\le N-1$$
and $A_j=B_j=0$ for $j>N$.
Then $R_k=\sum_{j\ge 0} k^{-j}A_j$ and $\chi^{-1} R_k^{-1} = \sum_j k^{-j} B_j + O(k^{-N-1})$.  So 
\begin{align*}
  \Id + \frac{\epsilon}{k} [\alpha, R_k\beta R_k^{-1}]&= Id + \frac{\epsilon}{k}\left[ \alpha, \sum_{j\ge 0} k^{-j}A_j \beta \left(\sum_{j\ge 0} k^{-j} B_j+O(k^{-N-1})\right)\right]\\
&=\Id + \frac{\epsilon}{k} [\alpha,\sum_{j\ge 0} k^{-j} \sum_{i=0}^j A_i \beta B_{j-i}]+  O(k^{-N-2})] \text{ by } \eqref{eq:formalboundalpha},\eqref{lem:boundnormAB}\\
&=\Id + \sum_{j\ge 0} k^{-j-1} \epsilon [\alpha, \sum_{i=0}^j A_i \beta B_{j-i}] + O(k^{-N-2}).
\end{align*}
Thus letting
$$ \Delta:= \sum_{j\ge N} k^{-j-1} \epsilon [\alpha, \sum_{i=0}^j A_i \beta B_{j-i}]$$
we have
\begin{align*}
  \Id + \frac{\epsilon}{k} [\alpha, R_k\beta R_k^{-1}]&=\Id + \sum_{j=0}^{N-1} k^{-j-1} A_{j+1} + \Delta + O(k^{-N-2})\\
&=\Id +  \sum_{j=1}^N k^{-j} A_j + \Delta + O(k^{-N-2})\\
&=\chi R_k + \Delta + O(k^{-N-2}).
\end{align*}
So all that remains to prove that $\Delta = O(k^{-N-1})$.    But this is clear since if $j>2N$ and $0\le i\le j$ then either $A_i$ or $B_{j-i}$ are zero, so in fact
$$ \Delta = \sum_{j=N}^{2N} k^{-j-1} \epsilon [\alpha, \sum_{i=0}^j A_i \beta B_{j-i}].$$
Thus the required bound on $\Delta$ comes from \eqref{eq:formalboundalpha} and Lemma \eqref{lem:boundnormAB}. 
\end{proof}

\begin{proof}[Proof of Theorem \ref{thm:asymptoticexpansion2}]
Let $W=\End(V_k)$, which is given the operator norm $\|\cdot\|'_k$.      Define a function $f\colon W\to W$ by
$$ f(Q) = \chi^{-1}\left(\Id + \frac{\epsilon}{k} [\alpha, Q\beta Q^{-1}]\right).$$
By hypothesis $f(P_k) =P_k$ and Lemma \ref{lem:DNfixedpoint} implies 
$$R_k = f(R_k) + \chi^{-1}O(k^{-N-1})$$    By the Mean-Value theorem and the bound on the derivative of $f$ over $B_k$ that we shall show below (Lemma \eqref{lem:boundderivativef}) we conclude
$$ \| f(P) - f(R_k)\|' \le C''k^{-1} \|P-R_k\|'$$
So
$$ \| P-R_k\|' \le  \| f(P) - f(R_k)\|'  + \chi^{-1}O(k^{-N-1}) \le C''k^{-1} \|P-R_k\|' + \chi^{-1} O(k^{-N-1})$$
Thus
$$ \| P-R_k\|' (1-C''k^{-1}) = \chi^{-1}O(k^{-N-1})$$
For $k$ sufficiently large we will certainly have $1-C''k^{-1}\ge 1/2$.  Thus we conclude $P_k-R_k= \chi^{-1}O(k^{-N-1})$ as required.
\end{proof}


\begin{lemma}\label{lem:boundderivativef}
For all $Q\in B_k$ the derivative of $f$ is bounded by
  $$\|Df|_{Q}\|' = O(k^{-1})$$
where $Df|_{Q}\colon W\to W$.
\end{lemma}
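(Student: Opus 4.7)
The plan is to simply compute the Fréchet derivative of $f$ directly and then bound each term using the estimates already established for $Q$ and $Q^{-1}$, together with the hypothesis \eqref{eq:formalboundalpha} on $\alpha$ and $\beta$.

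First I would compute $Df|_Q$. Since the constant term $\chi^{-1}\Id$ drops out and the commutator $[\alpha,\cdot]$ is linear, it suffices to differentiate $Q\mapsto Q\beta Q^{-1}$. Using the standard identity $D(Q\mapsto Q^{-1})|_Q(H) = -Q^{-1}HQ^{-1}$, I would obtain
$$
Df|_Q(H) \;=\; \chi^{-1}\,\frac{\epsilon}{k}\,\bigl[\alpha,\; H\beta Q^{-1} - Q\beta Q^{-1} H Q^{-1}\bigr].
$$

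Next I would estimate the two terms inside the commutator. By Lemma \ref{lem:boundinverse}, for every $Q\in B_k$ and all $k$ sufficiently large one has $\|Q\|' \le 2\chi^{-1}$ and $\|Q^{-1}\|' \le 2\chi$. Combined with the hypothesis $\|\alpha\|'=\|\beta\|'=O(k^0)$, this gives the submultiplicative bounds
$$
\|H\beta Q^{-1}\|' \;\le\; \|H\|'\,\|\beta\|'\,\|Q^{-1}\|' \;=\; O(\chi)\,\|H\|',
$$
and similarly $\|Q\beta Q^{-1} H Q^{-1}\|' \le \|Q\|'\,\|\beta\|'\,\|Q^{-1}\|'\,\|Q^{-1}\|'\,\|H\|' = O(\chi)\,\|H\|'$, where the two factors of $\chi$ from the $Q^{-1}$'s combine with one factor of $\chi^{-1}$ from $\|Q\|'$ to leave a single factor of $\chi$. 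Applying the elementary bound $\|[\alpha, X]\|' \le 2\|\alpha\|'\|X\|' = O(1)\|X\|'$ then yields
$$
\bigl\|[\alpha,\; H\beta Q^{-1} - Q\beta Q^{-1} H Q^{-1}]\bigr\|' \;=\; O(\chi)\,\|H\|'.
$$

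Finally, multiplying by the scalar prefactor $\chi^{-1}\epsilon/k$ and using $\epsilon=O(k^0)$ produces
$$
\|Df|_Q(H)\|' \;\le\; \chi^{-1}\cdot O(k^{-1})\cdot O(\chi)\,\|H\|' \;=\; O(k^{-1})\,\|H\|',
$$
with the constant in the $O(k^{-1})$ independent of $Q\in B_k$, which is the desired conclusion. There is no real obstacle here: the only subtlety is keeping track of the cancellation between the $\chi$'s coming from $\|Q^{-1}\|'$ and the $\chi^{-1}$ out front, which is precisely what makes the bound drop from $O(1)$ to $O(k^{-1})$.
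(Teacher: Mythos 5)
Your proposal is correct and follows essentially the same route as the paper: compute $Df|_Q$ via the derivative of the inverse, then combine $\|Q\|'\le 2\chi^{-1}$, $\|Q^{-1}\|'\le 2\chi$ from Lemma \ref{lem:boundinverse} with $\|\alpha\|'=\|\beta\|'=O(k^0)$ and $\epsilon=O(k^0)$ so that the factors of $\chi$ cancel and leave $O(k^{-1})$. No gaps.
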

\begin{proof}
If $E\in W$ then $(Q+tE)^{-1} = Q^{-1} - t Q^{'-1}E Q^{-1} + O(t^2)$.  Thus thinking of $Df|_{Q}\colon W\to W$ we have
$$Df|_{Q}(E) = \chi^{-1} \epsilon k^{-1} [\alpha, E\beta M^{'-1} - D\beta D^{-1} EM^{'-1}].$$
So
$$ \|Df|_{Q}(E)\|'\le \chi^{-1} \epsilon k^{-1} 2 \|\alpha\|' (\|\beta\|' \|D^{-1}\|'  +\|D\|' \|\beta\|' \|D^{-1}\|'^2) \|E\|,$$
and using \eqref{lem:boundinverse} and \eqref{eq:formalboundalpha}, this implies
$$ \|Df|_{Q}\|' \le C' \epsilon k^{-1}$$
as required.
\end{proof}

\subsection{Synthesis}

Let $\|\cdot\|_k$ be a weakly geometric sequence of metrics on $H^0(E(k))$ and let $\|\cdot\|'_k$ be the $L^2$-metric of the induced Fubini-Study metric $H_{FS,k}$.  Let $\delta  = \delta_k = O(k^{n-1})$.  We shall apply the results of the previous section to the vector spaces $V_k$ and morphisms
$$\alpha := \phi_{*}\colon H^0(M)\otimes H^0(E(k))\to H^0(E(k)).$$
So let $\alpha^*$ be the adjoint of $\alpha$ with respect to $\|\cdot\|_k$ and $\beta$ be the adjoint with respect to $\|\cdot\|'_k$.  Also let
\begin{equation}\label{eq:epsilon}
\epsilon = \epsilon_k = \frac{\delta k}{1 + \vertiii{\alpha}^2}
\end{equation}
where the Frobenius norm used to define $\epsilon$ depends on the original metric $\|\cdot\|$, and not on $\|\cdot\|'$. Then as $\delta = O(k^{n-1})$ we have by the weakly geometric hypothesis on $\|\cdot\|_k$ that  $\epsilon = O(k^0)$. 

\begin{lemma}\label{lem:synthesis2}
Suppose that the sequence of Fubini-Study metrics $H_{FS,k}$ lie in a bounded set.  Then
  $\|\beta\|'_k = \|\alpha\|'_k = O(k^0)$.
\end{lemma}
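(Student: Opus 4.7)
The plan is as follows: since $\beta$ is by definition the adjoint of $\alpha=\phi_*$ taken with respect to the same inner product $\|\cdot\|'_k$, one has $\|\beta\|'_k=\|\alpha\|'_k$ automatically. Hence everything reduces to proving a uniform bound $\|\phi_*\|'_k = O(k^0)$ for the operator norm of $\phi_*$ with respect to the $L^2$-metrics induced on $H^0(M)\otimes H^0(E(k))$ and $H^0(E(k))$ by the fixed $h_M$ and the given $H_{FS,k}$.

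To do this I would factor $\phi_*$ as in its very definition, as pointwise multiplication $\mu\colon H^0(M)\otimes H^0(E(k))\to H^0(M\otimes E(k))$ followed by pushforward under the bundle map $\phi$, and estimate each factor separately. For the map induced by $\phi$, the pointwise operator norm of $\phi_x\colon M_x\otimes E(k)_x\to E(k)_x$ is insensitive to the $L^k$ twist and so equals that of $\phi_x\colon M_x\otimes E_x\to E_x$ computed with $h_M$ and $h_{FS,k}$; by the assumed boundedness of $H_{FS,k}$ this quantity is uniformly bounded on the compact $X$ by some constant $C$, and integrating gives $\|\phi(t)\|'_k\le C\|t\|'_k$. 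For the multiplication $\mu$, I would fix an orthonormal basis $\{e_\gamma\}$ of $H^0(M)$ and expand any $v\in H^0(M)\otimes H^0(E(k))$ as $v=\sum_\gamma e_\gamma\otimes f_\gamma$, so that $(\|v\|'_k)^2=\sum_\gamma (\|f_\gamma\|'_k)^2$, and then apply at each point the Gram-matrix inequality
\[
|\mu(v)(x)|^2 \le \Bigl(\sum_\gamma |e_\gamma(x)|^2_{h_M}\Bigr)\Bigl(\sum_\gamma |f_\gamma(x)|^2_{H_{FS,k}}\Bigr),
\]
which is a consequence of the elementary fact that $\tr(G_1G_2)\le \tr(G_1)\tr(G_2)$ for positive semidefinite matrices. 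The first factor is the Bergman function of the fixed hermitian bundle $(M,h_M)$, a smooth and hence bounded function on the compact $X$, with bound $C_M$ independent of $k$. Integrating against $\omega^n/n!$ then gives $(\|\mu(v)\|'_k)^2\le C_M(\|v\|'_k)^2$, and composing with the bound on $\phi$ yields $\|\phi_*\|'_k\le C\sqrt{C_M} = O(k^0)$.

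The only step requiring any genuine care is the one for $\mu$, and even there the essential point is just that $(M,h_M)$ is fixed, $k$-independent data, so its Bergman function cannot deteriorate as $k$ grows. It is worth pointing out that the weakly geometric hypothesis on $\|\cdot\|_k$ plays no role in this lemma—the conclusion is purely a statement about the $L^2$-metric $\|\cdot\|'_k$ induced by $H_{FS,k}$, and the hypothesized boundedness of $H_{FS,k}$ is all that enters.
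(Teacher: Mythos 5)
Your proof is correct and follows essentially the same route as the paper: the equality $\|\beta\|'=\|\alpha\|'$ is immediate from the definition of the adjoint, and the bound on $\|\phi_*\|'$ is obtained by factoring $\phi_*$ through the multiplication map $H^0(M)\otimes H^0(E(k))\to H^0(M\otimes E(k))$ (controlled because $(M,h_M)$ is fixed, $k$-independent data) followed by the pointwise-bounded pushforward by $\phi$, exactly as in the paper's Lemma on the multiplication map in the appendix. Your Gram-matrix variant, bounding the multiplication by the Bergman function $\sum_\gamma|e_\gamma(x)|^2_{h_M}$ of $M$, is a slightly sharper constant than the paper's $h^0(M)\sup_t|t|^2_\infty$, but the argument is the same in substance, and you are right that the weakly geometric hypothesis is not used here.
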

\begin{proof}
This is clear since by definition $\beta$ is the adjoint of $\alpha$ with respect to the inner product determined by $\|\cdot\|'_k$ giving the first equality.  The second follows since $\alpha$ is the linear map induced by $\phi\colon M\otimes E\to E$ which is bounded pointwise, and $\|\cdot\|'$ is the operator norm of a bounded sequence of hermitian metrics on $E$.
\end{proof}

\begin{lemma}\label{lem:synthesis1}
Suppose that $(\cdot,\cdot)'_k  = (P_k \cdot,\cdot)_k$. Then
  $$\alpha^* = P_k\beta P_k^{-1}$$
and moreover there is a constant $C$ such that
\begin{equation}
 \| P_k-\chi^{-1}\Id \|'_k = \| P_k-\chi^{-1}\Id \|_k \le \chi^{-1}Ck^{-1}.\label{eq:Dballbalanced}
 \end{equation}
\end{lemma}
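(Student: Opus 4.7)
My plan is to unfold the two adjoint definitions against the relation $(\cdot,\cdot)'_k = (P_k\cdot,\cdot)_k$ to obtain the identity $\alpha^* = P_k\beta P_k^{-1}$, then use the explicit formula \eqref{eq:definitionPexpansion} together with the weakly geometric bounds to control $\|P_k-\chi^{-1}\Id\|_k$. Equality of the two operator norms will follow because $P_k-\chi^{-1}\Id$ commutes with $P_k$.

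The starting observation is that $P_k$ is self-adjoint and positive with respect to $(\cdot,\cdot)_k$: the commutator $[\alpha,\alpha^*]=\alpha\alpha^*-\alpha^*\alpha$ is manifestly self-adjoint so $P_k$ is a real linear combination of self-adjoint operators, and positivity is forced by $(\cdot,\cdot)'_k$ being a genuine inner product. Per the conventions of \secref{sec:convention}, $P_k$ on $V:=H^0(E(k))$ and $P_k\otimes\Id$ on $V\otimes H^0(M)$ will both be denoted $P_k$ and are self-adjoint with respect to the corresponding tensor-product metrics.

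For the identity, given $u\in V\otimes H^0(M)$ and $w\in V$, expand $(\alpha u, w)'_k = (u,\beta w)'_k$ using $(\cdot,\cdot)'_k = (P_k\cdot,\cdot)_k$ on both sides to get
$$(P_k\alpha u, w)_k = (P_k u, \beta w)_k = (u, P_k\beta w)_k,$$
while the left-hand side equals $(u,\alpha^* P_k w)_k$ by the definition of $\alpha^*$ together with self-adjointness of $P_k$. Arbitrariness of $u,w$ forces $\alpha^*P_k = P_k\beta$, hence $\alpha^* = P_k\beta P_k^{-1}$.

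For the norm estimate, \eqref{eq:definitionPexpansion} gives
$$P_k - \chi^{-1}\Id = \frac{\chi^{-1}\delta}{1+\vertiii{\alpha}^2}[\alpha,\alpha^*],$$
with $\|[\alpha,\alpha^*]\|_k \le 2\|\alpha\|_k^2 \le 2c'^2$ by \eqref{eq:boundalpha}. Combined with $\vertiii{\alpha}^2 \ge (c'/\rk_E)k^n$ from \eqref{eq:weakgeom} and $\delta = O(k^{n-1})$, this yields the claimed $\chi^{-1}Ck^{-1}$ bound. Finally, since $P_k-\chi^{-1}\Id$ commutes with $P_k$, a $(\cdot,\cdot)_k$-orthonormal eigenbasis $\{e_i\}$ of $P_k$ with eigenvalues $\lambda_i > 0$ rescales via $e_i\mapsto e_i/\sqrt{\lambda_i}$ to a $(\cdot,\cdot)'_k$-orthonormal eigenbasis, and $P_k-\chi^{-1}\Id$ acts diagonally on both with identical eigenvalues $\lambda_i-\chi^{-1}$, so the two operator norms coincide. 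The only delicacy is bookkeeping for the three inner products in play (on $V$, on $V\otimes H^0(M)$, and their primed counterparts), which the conventions of \secref{sec:convention} render routine.
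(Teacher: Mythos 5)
Your proof is correct and follows essentially the same route as the paper: the identity $\alpha^*=P_k\beta P_k^{-1}$ comes from unfolding the two adjoints against $(\cdot,\cdot)'_k=(P_k\cdot,\cdot)_k$ (which the paper leaves as a ``straightforward calculation''), the bound uses $\delta=O(k^{n-1})$ together with \eqref{eq:weakgeom} and \eqref{eq:boundalpha} exactly as in the text, and your eigenbasis argument for the equality of norms is just an explicit version of the paper's observation that $P_k^{1/2}$ is self-adjoint and commutes with $P_k-\chi^{-1}\Id$.
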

\begin{proof}
The first statement is a straightforward calculation, which we emphasise should be read with the convention in \secref{sec:convention} in mind.  Thus it consists of two statements, namely that the equality holds as maps $H^0(E(k)) \to H^0(E(k))\otimes H^0(M)$ and also as maps $H^0(E(k))\otimes H^0(M)^*\to H^0(E(k))$.

To prove \eqref{eq:Dballbalanced} notice that
$$ \| P_k- \chi^{-1} \Id \| \le \chi^{-1}\frac{\epsilon}{k} \| [\alpha,\alpha^*]\| \le \chi^{-1}\frac{2\epsilon}{k} \|\alpha\|^2$$
since the dual here is taken with respect to the $\|\cdot\|_k$ metric so $\|\alpha^*\| = \|\alpha\|$.  Thus from \eqref{eq:boundalpha} we have
\begin{equation}
 \| P_k- \chi^{-1} \Id \|_k = \chi^{-1} O(k^{-1}).\label{eq:Dclosetoid}
\end{equation}
Finally the hypothesis $(\cdot,\cdot)'_k = (P_k\cdot,\cdot_k)$, and the fact that $P_k^{1/2}$ is self-adjoint with respect to $(\cdot,\cdot)$ and commutes with $P_k-\chi^{-1}\Id$, yields \eqref{eq:Dballbalanced}.
\end{proof}

There is no loss in making $C$ larger if necessary.  Once again we set
$$B_k = \{ Q\in End(H^0(E(k)) : \| \chi Q - \Id\|' \le Ck^{-1}\}.$$

\begin{corollary}\label{cor:synthesis}
Suppose that $(\cdot,\cdot)'_k  = (P_k \cdot,\cdot)_k$.  Then $P_k\in B_k$ for all $k$ and satisfies
$$P_k = \chi^{-1}\left( \Id + \frac{\epsilon}{k} [\alpha, P_k \beta P_k^{-1}]\right)$$
\end{corollary}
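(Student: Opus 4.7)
The plan is essentially to assemble the two parts of Lemma \ref{lem:synthesis1} with the definition \eqref{eq:definitionPexpansion} of $P_k$ and the choice \eqref{eq:epsilon} of $\epsilon$, so the corollary reduces to a direct substitution rather than any new analytic input.

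First I would verify the membership $P_k \in B_k$. By definition of $B_k$ this amounts to the bound $\|\chi P_k - \Id\|'_k \le Ck^{-1}$, but this is precisely the content of \eqref{eq:Dballbalanced} in Lemma \ref{lem:synthesis1}, which gives the bound both in $\|\cdot\|_k$ and $\|\cdot\|'_k$. So no further work is needed here; the constant $C$ used in the definition of $B_k$ is the one furnished by Lemma \ref{lem:synthesis1} (enlarged if necessary, as noted in the paragraph preceding the corollary).

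Second, I would derive the functional equation. By the definition \eqref{eq:definitionPexpansion},
\begin{equation*}
P_k \;=\; \chi^{-1}\!\left(\Id + \frac{\delta}{1+\vertiii{\phi_*}^2}[\phi_*,(\phi_*)^{*}]\right),
\end{equation*}
where $(\phi_*)^*$ is the adjoint taken with respect to the original metric $\|\cdot\|_k$. In the notation of this section $\alpha = \phi_*$ and $\alpha^* = (\phi_*)^*$, and the choice of $\epsilon$ in \eqref{eq:epsilon} is exactly $\epsilon/k = \delta/(1+\vertiii{\alpha}^2)$. Substituting gives
\begin{equation*}
P_k \;=\; \chi^{-1}\!\left(\Id + \frac{\epsilon}{k}[\alpha,\alpha^*]\right).
\end{equation*}

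Finally I would invoke the first identity of Lemma \ref{lem:synthesis1}, which says that under the hypothesis $(\cdot,\cdot)'_k = (P_k\cdot,\cdot)_k$ the two adjoints of $\alpha$ are related by $\alpha^* = P_k \beta P_k^{-1}$, where $\beta$ is the adjoint of $\alpha$ with respect to the $L^2$-metric $\|\cdot\|'_k$. Substituting this into the bracket (and using that the bracket $[\alpha,\cdot]$ is well defined on $\End(H^0(E(k)))$ according to the conventions of \secref{sec:convention}) produces
\begin{equation*}
P_k \;=\; \chi^{-1}\!\left(\Id + \frac{\epsilon}{k}\bigl[\alpha,\, P_k\beta P_k^{-1}\bigr]\right),
\end{equation*}
which is the claimed identity. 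There is no real obstacle here: the corollary is purely a bookkeeping step that repackages the defining formula for $P_k$ in a form amenable to the fixed point argument of Theorem \ref{thm:asymptoticexpansion2}, with the only subtle point being that the adjoint switches from $\|\cdot\|_k$ to $\|\cdot\|'_k$ by conjugation with $P_k$.
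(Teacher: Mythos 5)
Your proposal is correct and follows essentially the same route as the paper: membership in $B_k$ is exactly \eqref{eq:Dballbalanced} from Lemma \ref{lem:synthesis1}, and the functional equation is the defining formula \eqref{eq:definitionPexpansion} rewritten via $\epsilon/k = \delta/(1+\vertiii{\alpha}^2)$ and the conjugation identity $\alpha^* = P_k\beta P_k^{-1}$. The only detail the paper makes explicit that you leave implicit is that $P_k\in B_k$ guarantees invertibility of $P_k$ (via Lemma \ref{lem:boundinverse}), so that $P_k^{-1}$ in the displayed equation makes sense.
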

\begin{proof}
  The first statement comes from Lemma \ref{lem:synthesis1}.  In particular $P_k$ is invertible by Lemma \ref{lem:boundinverse}, and final statement combines the defining equation for $P$ \eqref{eq:definitionPexpansion} and Lemma \ref{lem:synthesis1}.
\end{proof}

\begin{proof}[Proof of Theorem \ref{thm:asymptoticexpansionD}]
  Corollary \ref{cor:synthesis} and Lemma \ref{lem:synthesis2} mean that Theorem \ref{thm:asymptoticexpansion2} can be applied to the morphism $\phi_*=\alpha\colon H^0(E(k))\to H^0(E(k)$ giving the desired operators $A_{j}$.
\end{proof}

\section{Limits of balanced metrics}\label{sec:limits}

In this section we prove Theorem \ref{thm:converge} in the introduction.  Let $\delta = \ell k^{n-1} + O(k^{n-2})$ with $\ell >0$ in our definition of balanced metric.

\begin{remark}
As will be clear from the proof, if in Theorem \ref{thm:converge} the sequence of balanced metrics is weakly geometric with constant $c'$ in \eqref{eq:weakgeom}, then we have the bound
$$
\frac{\ell}{1 + \rk_E c'}   \le c \le \frac{\ell}{1 + \rk_E^{-1} c'}.
$$
for the constant in the Hitchin equation.
\end{remark}

\subsection{Hormander estimate}

We first give a simple consequence of the H\"ormander-estimate.  Let $L$ be positive with positive metric $h$ and $E$ be a holomorphic bundle with metric $H$.  Together these determine a metric on $E\otimes L^k$ which we denotes by $H_k = H\otimes h^k$.  We write $\|\cdot\|_{H_k}$ for the $L^2$-metric on $C^{\infty}(E\otimes L^k)$ induced by $H_k$ and the volume form $\omega^{[n]}$ where $\omega$ is the curvature of $h$.   We also use this notation for the induced $L^2$-metric on the space of forms with values in $E\otimes L^k$. We denote by 
$$ \Pi_k\colon C^{\infty}(E\otimes L^k)\to H^0(E\otimes L^k)$$
the $L^2$-orthogonal projection.

\begin{theorem}(Hormander estimate)
  Let $g_k$ be a $\bar{\partial}$-closed (0,1) form with values in $E\otimes L^k$ with finite $L^2$-metric.   Then for $k$ sufficiently large there exists a smooth section $v_k$ in $E\otimes L^k$ such that
$$ \overline{\partial}{v_k} =g_k$$
$$ \| v_k \|^2_{H_k} \le \frac{C}{k} \|g_k\|^2_{H_k}$$
\end{theorem}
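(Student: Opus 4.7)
The plan is to apply the standard Bochner--Kodaira--Nakano technique, carefully tracking the growth of the positivity of the curvature of $H_k = H\otimes h^k$ as $k$ increases. First I would compute the curvature
\[
F_{H_k} = F_H \otimes \Id_{L^k} + k\, F_h\otimes \Id_E,
\]
and observe that on $(0,1)$-forms with values in $E\otimes L^k$ the curvature operator $[iF_{H_k},\Lambda]$ decomposes as $k\,\Lambda(i F_h)\cdot \Id_E + [iF_H,\Lambda]$. Since $i F_h = \omega$ is K\"ahler, $\Lambda\omega$ acts on primitive $(0,1)$-pieces as a positive multiple of the identity, so there is a constant $c_0>0$ (depending only on $H$, $h$ and $E$) such that for all sufficiently large $k$ and all $\eta\in \Omega^{0,1}(E\otimes L^k)$,
\[
\bigl\langle [iF_{H_k},\Lambda]\eta,\eta\bigr\rangle_{H_k} \;\ge\; c_0 k\,|\eta|^2_{H_k}.
\]

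Next I would invoke the Bochner--Kodaira--Nakano identity on the K\"ahler manifold $(X,\omega)$: for any smooth $(0,1)$-form $\eta$ with values in $E\otimes L^k$,
\begin{equation*}
\|\bar\partial\eta\|_{H_k}^2 + \|\bar\partial^*\eta\|_{H_k}^2 \;\ge\; \int_X \bigl\langle [iF_{H_k},\Lambda]\eta,\eta\bigr\rangle_{H_k}\,\omega^{[n]} \;\ge\; c_0 k\, \|\eta\|_{H_k}^2.
\end{equation*}
In particular, restricting to $\eta\in \ker\bar\partial$ (in the appropriate domain of $\bar\partial^*$) we get
\[
\|\eta\|_{H_k}^2 \;\le\; \frac{1}{c_0 k}\,\|\bar\partial^*\eta\|_{H_k}^2.
\]

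Given the $\bar\partial$-closed datum $g_k$, I would then consider the linear functional
\[
\bar\partial^*\eta \;\longmapsto\; \langle \eta, g_k\rangle_{H_k}
\]
defined on the image $\bar\partial^*(\mathrm{Dom}(\bar\partial^*)\cap \ker\bar\partial)$ inside $L^2(E\otimes L^k)$. Cauchy--Schwarz combined with the inequality above bounds this functional by $(c_0 k)^{-1/2}\|g_k\|_{H_k}\cdot \|\bar\partial^*\eta\|_{H_k}$. By Hahn--Banach and the Riesz representation theorem there exists $v_k\in L^2(E\otimes L^k)$ with
\[
\|v_k\|_{H_k}^2 \;\le\; \frac{1}{c_0 k}\,\|g_k\|_{H_k}^2
\]
such that $\langle \bar\partial^*\eta, v_k\rangle = \langle \eta, g_k\rangle$ for every $\eta$ in the chosen domain; equivalently, $\bar\partial v_k = g_k$ in the sense of distributions. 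Finally, I would appeal to elliptic regularity for $\bar\partial+\bar\partial^*$ to conclude that $v_k$ is smooth, yielding the stated estimate with $C = c_0^{-1}$.

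The only genuine subtlety is keeping the constant $C$ independent of $k$; this is guaranteed because the $k$-dependence of $[iF_{H_k},\Lambda]$ is linear in $k$, while the competing term $[iF_H,\Lambda]$ is bounded in $k$ and is absorbed once $k$ is large. The remaining steps (closability of $\bar\partial$, density of smooth forms, and the elliptic regularity argument) are standard on the compact K\"ahler manifold $X$, so I do not anticipate any substantive obstacle beyond this curvature bookkeeping.
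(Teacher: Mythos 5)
The paper states this theorem without proof, as a standard citation of the H\"ormander $L^2$ technique, so the comparison is with the standard argument rather than with anything in the text. Your overall strategy --- an a priori coercivity estimate from Bochner--Kodaira--Nakano, then the duality/Hahn--Banach/Riesz step, then elliptic regularity --- is the right one, and the functional-analytic second half of your proposal is fine.

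The gap is in the curvature positivity step, and it is a genuine sign error rather than bookkeeping. On $(p,q)$-forms the commutator of exterior multiplication by $\omega$ with $\Lambda$ acts as $(p+q-n)\Id$; hence on $(0,1)$-forms with values in $E\otimes L^k$ the dominant part of $[iF_{H_k},\Lambda]$ is $(1-n)k\,\Id$, which is nonpositive, and strictly negative for $n\ge 2$. It is true that every $(0,1)$-form $\eta$ is primitive and that $\Lambda(\omega\wedge\eta)=(n-1)\eta$ is positive, but the operator appearing in the Bochner--Kodaira--Nakano identity is $\omega\wedge\Lambda(\cdot)-\Lambda(\omega\wedge\cdot)$, which on such $\eta$ equals $-(n-1)\eta$: you have taken the commutator with the wrong sign. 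As written, the inequality $\langle[iF_{H_k},\Lambda]\eta,\eta\rangle\ge c_0k|\eta|^2$ is false and the a priori estimate collapses. The standard repair is to use the isometric identification $\Omega^{0,1}(E\otimes L^k)\cong\Omega^{n,1}(E\otimes L^k\otimes K_X^{-1})$, which commutes with $\bar{\partial}$ once $K_X$ is given the metric induced by $\omega$. On $(n,1)$-forms one has $[\omega\otimes\Id,\Lambda]=+\Id$, so the curvature of $E\otimes L^k\otimes K_X^{-1}$ contributes $k\,\Id$ plus a term bounded uniformly in $k$ (the curvature of $E$ together with the Ricci form of $\omega$), giving $[i\Theta,\Lambda]\ge(k-C')\Id\ge\tfrac{k}{2}\Id$ for $k$ large. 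Equivalently, one can use the Weitzenb\"ock formula for $(0,1)$-forms, in which the bundle curvature enters through the Nakano-type contraction against the metric rather than through the trace, and is $+k\,\Id$ to leading order. With either correction the remainder of your argument goes through verbatim and yields the stated estimate with a uniform constant.
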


\begin{corollary}\label{cor:hormander}
  Let $f_k\in C^{\infty}(E\otimes L^k)$.  Then
$$ \| f_k - \Pi_k(f_k)\|^2_{H_k} \le \frac{C}{k} ||\bar{\partial} f_k||^2_{H_k}.$$
\end{corollary}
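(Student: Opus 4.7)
The plan is to apply the H\"ormander estimate to $g_k := \bar{\partial} f_k$ and combine it with the basic property that $\Pi_k$ is an orthogonal projection with respect to the $L^2$-inner product $\|\cdot\|_{H_k}$.

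First I would observe that $g_k = \bar{\partial} f_k$ is automatically $\bar{\partial}$-closed since $\bar{\partial}^2 = 0$, and has finite $L^2$-norm (one may assume this, or restrict to the relevant subspace). Applying the H\"ormander estimate just quoted, for $k$ sufficiently large there is a smooth section $v_k$ of $E\otimes L^k$ with
\begin{equation*}
\bar{\partial} v_k = \bar{\partial} f_k \qquad \text{and} \qquad \|v_k\|_{H_k}^2 \le \frac{C}{k} \|\bar{\partial} f_k\|_{H_k}^2.
\end{equation*}

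The next step is to use $v_k$ to exhibit a holomorphic approximation to $f_k$. Setting $h_k := f_k - v_k$ we have $\bar{\partial} h_k = 0$, so $h_k \in H^0(E\otimes L^k)$. Since $\Pi_k$ is the identity on holomorphic sections and is linear, this gives the decomposition
\begin{equation*}
f_k - \Pi_k(f_k) = (h_k + v_k) - (h_k + \Pi_k(v_k)) = v_k - \Pi_k(v_k).
\end{equation*}
Finally, because $\Pi_k$ is the $L^2$-orthogonal projection onto $H^0(E\otimes L^k)$, the operator $\Id - \Pi_k$ has operator norm at most $1$ on $L^2$, hence
\begin{equation*}
\|f_k - \Pi_k(f_k)\|_{H_k}^2 = \|v_k - \Pi_k(v_k)\|_{H_k}^2 \le \|v_k\|_{H_k}^2 \le \frac{C}{k} \|\bar{\partial} f_k\|_{H_k}^2,
\end{equation*}
which is the desired inequality.

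There is no real obstacle here beyond ensuring the hypotheses of the H\"ormander estimate are met; the only point worth checking is that the positivity of $L$ and $k$ being sufficiently large suffice to solve the $\bar\partial$-equation with the stated constant independent of $f_k$, which is precisely the content of the theorem invoked. The key conceptual input is the simple but useful identity $f_k - \Pi_k(f_k) = v_k - \Pi_k(v_k)$, which transfers the estimate from the auxiliary solution $v_k$ to the projection error for $f_k$.
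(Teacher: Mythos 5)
Your proof is correct and follows essentially the same route as the paper: apply the H\"ormander estimate to $g_k=\bar\partial f_k$ to produce $v_k$, note that $f_k - v_k$ is holomorphic, and then use the defining (norm-minimizing/contractive) property of the orthogonal projection $\Pi_k$ to transfer the bound on $\|v_k\|_{H_k}$ to $\|f_k-\Pi_k(f_k)\|_{H_k}$. The identity $f_k-\Pi_k(f_k)=v_k-\Pi_k(v_k)$ is just a repackaging of the paper's observation that $\|f_k-\Pi_k(f_k)\|_{H_k}\le\|f_k-t\|_{H_k}$ for the holomorphic section $t=f_k-v_k$.
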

\begin{proof}
  Let $u_k = f_k - \Pi_k(f_k)$.  Then by the definition of an orthogonal projection 
$$ \|u_k\|_{k\phi}\le \|f_k -t\|_{k\phi}$$
for all $t\in H^0(EL^k)$.  Now apply the Hormander estimate with $g= \overline{\partial} f_k$ to deduce there is an $v_k$ with $\overline{\partial} v_k = \overline{\partial} f_k$ and
$$ \| v_k \|^2_{L^2} \le \frac{C}{k} \| \overline{\partial} f_k\|^2$$
Put $t = f_k - v_k$ so $t\in H^0(EL^k)$.  Then 
$$ \|u_k\|^2_{H_k} \le \|f_k -t\|^2_{k\phi} = \| h_k\|^2_{H_k} \le \frac{C}{k} \| \overline{\partial} f_k\|^2_{H_k}$$
as claimed.
\end{proof}

\subsection{Proof of Theorem \ref{thm:converge}}

Recall
\[ 
\phi_{*}\colon H^0(M) \otimes \CC^N \to  H^0(M\otimes E(k)) \stackrel{\phi}{\to} H^0(E(k)) \simeq \mathbb C^{N},
\]
where the first map is the natural multiplication and the isomorphism $H^0(E(k)) \simeq \mathbb C^{N}$ is given by a choice of basis $\underline{s}$ of $H^0(E(k))$. Denote by $(\cdot,\cdot)$ the standard hermitian metric on $\CC^N$ (i.e. corresponding to the metric on $H^0(E(k))$ that makes the balanced basis $\underline{s}$ orthonormal) and by $(\phi_{*})^*$ the corresponding adjoint. Define $\epsilon = \epsilon(k)$ as in \eqref{eq:epsilon}.

Recall from Proposition \ref{prop:balancedchar} that the basis $\underline{s}$ of $H^0(E(k))$ is balanced if and only if the corresponding embedding $\iota_{\underline{s}}$ and quantized Higgs field $\phi_{*}$ satisfy
\begin{equation}\label{eq:balancedglobgen}
\sum_{l} (Ps'_l)(\cdot,s'_l)_{H_k} = \Id,
\end{equation}
where $s_l' = P^{1/2}s_l$ gives an orthonormal basis for the $L^2$-metric induced by $H_k = \iota_{\underline{s}}^* h_{FS}$ and $P$ is given by the endomorphism \eqref{eq:Pdef}.

\begin{proof}[Proof of Theorem \ref{thm:converge}]

Define a sequence of smooth endomorphism of $E$ by 
$$
T_k := \chi^{-1}(\Id + \epsilon k^{-1}[\phi,\phi^{*_k}])B_k - \Id.
$$ 
where $B_k$ denotes the Bergman function of the balanced metric $h_k$.

We claim that by the asymptotic expansion of the Bergman function, it is enough to prove the following bound for the $L^2$-metric on $C^\infty(\End E)$
\begin{equation}\label{eq:estimate1}
\|T_k\|_{h_k} \leq C k^{-1-\nu}
\end{equation}
for some $\nu > 0$.  To see this, note that the asymptotics of the Bergman function imply
\begin{align*}
\|\Lambda_k F_{h_k} + S_\omega/2 \Id + \epsilon [\phi,\phi^{*_k}] - \lambda \Id\|_{h_k} & \leq  k\|T
_k\|_{h_k}  + C'k^{-1}
\end{align*}
and hence \eqref{eq:estimate1} implies a pointwise equality
$$
\lim_{k \to \infty} \Lambda_k F_{h_k} + S_\omega/2 \Id + \epsilon [\phi,\phi^{*_k}] = \lambda \Id.
$$
We have two cases. If $[\phi,\phi^{*_h}] = 0$ then, as $\epsilon = \epsilon(k)$ is bounded by \eqref{eq:weakgeom}, the previous equality implies that the limit metric $h$ satisfies the Hermite-Einstein equations (equivalent to the Hitchin equations in this case) up to a conformal change. On the other hand, if there exists $z \in X$ such that $[\phi,\phi^{*_h}](z) \neq 0$ then by $C^0$ convergence of the endomorphisms $\Lambda_k F_{h_k}$ and $[\phi,\phi^{*_k}]$ we have that
$$
c = \lim_{k \to \infty} \epsilon(k) = - ((\Lambda F_h + S_\omega/2 \Id   - \lambda \Id)_z)_{ij}/([\phi,\phi^{*_h}]_z)_{ij}
$$  
where $([\phi,\phi^{*_h}]_z)_{ij}$ is the $ij$ component of the endomorphism $([\phi,\phi^{*_h}]_z)$ after a choice of trivialization of $E$ at $z$. Hence, the metric $h$ satisfies the Hitchin equations with constant $c$.  This proves the claim, and we observe also that the bound of $c$ in the statement follows from again from   \eqref{eq:weakgeom}.

\begin{remark}
Another way to prove this is to take a convergent subsequence for the bounded sequence $\epsilon(k)$.
\end{remark}

For the proof of the main estimate \eqref{eq:estimate1}, using Proposition \ref{prop:balancedchar} we write
\begin{align*}
T_k & = \chi^{-1}(\Id + \epsilon k^{-1}[\phi,\phi^{*_k}])B_k - \sum_{l} (Ds'_l)(\cdot,s'_l)_{H_k}\\
& = \frac{\epsilon}{k\chi}\sum_{l} (([\phi,\phi^{*_k}] - [\phi_{*,k},(\phi_{*,k})^{*_{H_k}}])s'_l)(\cdot,s'_l)_{H_k}\\
& - \sum_{l} ((P - P_1)s'_l)(\cdot,s'_l)_{H_k}\\
& = \frac{\epsilon}{k\chi}\sum_{l} ([\phi,\phi^{*_k} - \Pi_k\phi^{*_k}]s'_l)(\cdot,s'_l)_{H_k}\\
& - \sum_{l} ((P - P_1)s'_l)(\cdot,s'_l)_{H_k}
\end{align*}
where
$$
P_1 = \chi^{-1}(\Id + \epsilon k^{-1} [\phi_{*},(\phi_{*})^{*_{H_k}}]).
$$
and we have use the identity
$$
[\phi_{*},(\phi_{*})^{*_{H_k}}]s'_l = [\phi,\Pi_k\phi^{*_k}]s_l'.
$$

\begin{lemma}
For any linear map $\alpha \colon H^0(E(k)) \to C^\infty(E(k))$
$$
\|\sum_{l} \alpha(s'_l)(\cdot,s'_l)_{H_k}\|_{h_k} \leq C k^n \|\alpha\|_{H_k}
$$
for $C > 0$ independent of $\alpha$ and $k$, where $\|\cdot\|_{H_k}$ denotes the operator norm induced by $H_k$.
\end{lemma}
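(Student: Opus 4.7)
The plan is to exploit a clean factorisation of the endomorphism
\[
A(x) := \sum_l \alpha(s'_l)(x)(\cdot, s'_l(x))_{H_k}
\]
through the evaluation map at $x$. Introduce the evaluation $e_x\colon H^0(E(k)) \to E(k)_x$ and the ``pointwise $\alpha$'' map $\beta_x\colon H^0(E(k)) \to E(k)_x$ given by $\beta_x(s) := \alpha(s)(x)$. Since $\{s'_l\}$ is orthonormal for the $L^2$-metric induced by $H_k$, a short calculation gives $e_x^{*}v = \sum_l (v, s'_l(x))_{H_k} s'_l$, from which one directly checks that $A(x) = \beta_x \circ e_x^{*}$ as endomorphisms of $E(k)_x$.

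With this factorisation the pointwise Hilbert--Schmidt norm becomes a trace, so that
\[
|A(x)|_{h_k}^2 = \tr\bigl(A(x)A(x)^{*}\bigr) = \tr\bigl(\beta_x^{*}\beta_x \cdot e_x^{*}e_x\bigr).
\]
Since $e_x^{*}e_x \in \End(H^0(E(k)))$ shares its nonzero spectrum with $e_x e_x^{*} = B_k(x) \in \End(E(k)_x)$, we have $\|e_x^{*}e_x\|_{\mathrm{op}} = \|B_k(x)\|_{\mathrm{op}}$. Both $\beta_x^{*}\beta_x$ and $e_x^{*}e_x$ are positive semi-definite, so the elementary trace inequality $\tr(ST) \le \|S\|_{\mathrm{op}}\tr(T)$ for positive semi-definite $S,T$ yields
\[
|A(x)|_{h_k}^2 \le \|B_k(x)\|_{\mathrm{op}} \tr(\beta_x^{*}\beta_x) = \|B_k(x)\|_{\mathrm{op}} \sum_l |\alpha(s'_l)(x)|_{h_k}^2.
\]

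The proof then finishes by combining two uniform bounds. First, the Bergman asymptotic expansion \eqref{eq:Bergmanexpansion} provides $\|B_k(x)\|_{\mathrm{op}} \le C_1 k^n$ uniformly in $x$. Second, integrating the pointwise inequality and using $\|\alpha(s'_l)\|_{h_k} \le \|\alpha\|_{H_k}$ (since $s'_l$ has unit $L^2$-norm) together with $N_k := \dim H^0(E(k)) = O(k^n)$ gives
\[
\|A\|_{h_k}^2 \le C_1 k^n \sum_l \|\alpha(s'_l)\|_{h_k}^2 \le C_1 N_k k^n \|\alpha\|_{H_k}^2 = O(k^{2n})\|\alpha\|_{H_k}^2,
\]
and taking square roots delivers the claim. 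I do not anticipate any real obstacle: once the factorisation $A(x) = \beta_x \circ e_x^{*}$ is spotted, the bound reduces to a standard trace inequality for positive operators together with the already established pointwise control on $B_k$.
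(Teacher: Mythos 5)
Your proof is correct and follows essentially the same route as the paper's: a pointwise Cauchy--Schwarz-type estimate controlled by the Bergman function on the diagonal, followed by integration and the dimension count $N_k=O(k^n)$. The only difference is cosmetic --- you package the pointwise step as the trace inequality $\tr(ST)\le\|S\|_{\mathrm{op}}\tr(T)$ applied to the factorisation through the evaluation map, bounding $\|B_k(x)\|_{\mathrm{op}}$, whereas the paper applies Cauchy--Schwarz directly to the sum and bounds $\sum_l|s'_l(x)|^2_{H_k}=\tr B_k(x)$; both inputs come from the same Bergman kernel expansion and give the same $O(k^n)$.
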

\begin{proof}
The lemma follows from the asymptotic expansion of the Bergman Kernel, which implies the following bound
$$
\|\sum_l|s'_l|_{H_k}^2\|_\infty \leq C k^n
$$
for a constant $C$ indepedendent of $k$. Using this, we have
\begin{align*}
\|\sum_{l} (\alpha(s'_l))(\cdot,s'_l)_{H_k}\|_{h_k} & = \(\int_X |\sum_l\alpha(s'_l)(\cdot,s'_l)_{H_k}|^2_{h_k}\frac{\omega^n}{n!} \)^{1/2}\\
& \leq \(\int_X \(\sum_l|\alpha(s'_l)|_{H_k}^2\) \(\sum_l |s'_l|_{H_k}^2\)\frac{\omega^n}{n!} \)^{1/2}\\
& \leq C^{1/2} k^{n/2} \(\sum_l\|\alpha(s'_l)\|_{H_k}^2\)^{1/2} \leq C^{1/2} k^{n/2}\chi^{1/2} \|\alpha\|_{H_k}.
\end{align*}
\end{proof}

Taking now $L^2$-metric on the last expression for $T_k$ we obtain
\begin{align*}
\|T_k\|_{h_k} & \leq C \frac{\epsilon k^n}{k\chi}\|[\phi,\phi^{*_k} - \Pi_k\phi^{*_k}]\|_{H_k} + C k^n \|P - P_1\|_{H_k}\\
& \leq C'k^{-1} \|\phi\|_{H_k}\|\phi^{*_k} - \Pi_k\phi^{*_k}\|_{H_k} + C'k^{-2}\\
& \leq C'' k^{-2}\|\dbar(\phi^{*_k})\|_{H_k} + C'k^{-2} \leq C''' k^{-2}
\end{align*}
as claimed. Here, for the second inequality we apply Theorem \ref{thm:asymptoticexpansionD}, while the third inequality follows from the Hormander estimate Corollary \ref{cor:hormander}.
\end{proof}

\begin{remark}
Although we do not expect the geometric case to be particularly relevant, it may be worth mentioning that there exists a more direct proof of Theorem \ref{thm:converge} assuming this stronger hypothesis. For that, one uses the Hormander estimate combined with the following characterization of the balanced condition (only valid in the geometric case)
\begin{equation}\label{eq:balancedcondgeomcase}
\Pi_k\(\chi B_k^{-1} - \frac{\delta}{1 + \vertiii{\phi_*}_{H_k}} (\phi \Pi_k \phi^{*_k} - \phi^{*_k} \phi) - \Id\) = 0.
\end{equation}
Here $H_k = h_k \otimes h_L^k$ denotes the hermitian metric on $E(k)$ whose $L^2$ norm is $\|\cdot\|_B$ and $\Pi_k$ the corresponding projection onto $H^0(E(k))$.
\end{remark}

\section{Generalizations}\label{sec:generaliz}


The results in this work generalize to twisted quiver bundles with relations, assuming that the twisting vector bundles are globally generated. A Hitchin-Kobayashi correspondence for these objects was proved in \cite{ACGP}, relating the existence of solutions of the \emph{twisted quiver vortex equations} with the slope stability of a twisted quiver bundle. A notion of Gieseker stability for twisted quiver sheaves has been provided in \cite{AC2,Schmitt2} for the construction of a moduli space. 

The twisted Higgs bundles we have considered above are precisely twisted Quiver bundles for the quiver consisting of a single vertex and arrow (with head and tail being this one vertex).   One could instead consider a quiver $Q$ with two vertices and one arrow
\begin{equation}\label{eq:quiver}\begin{CD}
\bullet_t @>>> \bullet_h
\end{CD}\end{equation}\vspace{-2mm} 

\noindent and a globally generated (twisting) holomorphic vector bundle $M$.  An $M$-twisted $Q$-bundle over $X$ is then given by  a pair of holomorphic vector bundles $E_t$ and $E_h$ and a morphism
$$
\phi \colon E_t \otimes M \to E_h.
$$
Thus the difference here is that $E_t$ and $E_h$ may be different.  Examples include holomorphic triples \cite{GP2,BrGP} and Bradlow pairs, and slope stability of the latter is related with the (classical) vortex equations \cite{Bradlow}.

To parameterize such quiver bundles $(E_t,E_h,\phi)$, taking $k$ a large positive integer, we associate an endomorphism
\begin{equation}\label{eq:quiverdiag}\begin{CD}
H^0(M) \otimes H^0(E_t(k)) @>> \phi_*> @.  H^0(E_h(k))
\end{CD}\end{equation}\vspace{-2mm}
 
Let $U_t$ and $U_h$ be complex vector spaces and consider the parameter space $Z$ given by the vector space
$$
Z = \Hom (H^0(M) \otimes U_t,U_h)
$$
We denote by
$$
\overline{Z} = \PP(Z \oplus \CC)
$$
its projective completion. Similarly as in Section \ref{sec:parameter}, basis $\underline{s}_t$ for $H^0(E_t(k))$ and $\underline{s}_h$ for $H^0(E_h(k))$ give isomorphisms $H^0(E_t(k))\simeq \mathbb C^{N_t}$ and $H^0(E_h(k))\simeq \mathbb C^{N_h}$, an
embedding
$$
\iota_{\underline{s}} \colon X \to \mathbb G := \mathbb G(\mathbb C^{N_t}; \rk_{E_t}) \times \mathbb G(\mathbb C^{N_h}; \rk_{E_h})
$$ 
and a point
\[\phi_* \in Z \subset \overline{Z}\]
where $U_t\simeq C^{N_t}$, $U_h\simeq \mathbb C^{N_h}$.

The group $SU(N_t) \times SU(N_h)$ acts in Hamiltonian fashion on the space of embeddings
$$ f= f_{\underline{s}} \colon X\to \overline{Z}\times \mathbb G,  \quad\text{ given by } f(x) = (\phi_*,\iota_{\underline{s}}(x)),$$
preserving a K\"ahler structure, obtained from the Fubini-Study K\"ahler structures on the Grassmannians and $\overline{Z}$. Zeros of the moment map are by definition balanced basis for the quiver bundle in question. For the Geometric Invariant Theory, one may consider the group $SL(N_t) \times SL(N_h)$ and the point
\begin{equation}\label{eq:Gieseckerpointquiver}
(\phi_*,T_{E_t},T_{E_h}) \in \overline{Z} \times \PP_t \times \PP_h
\end{equation}
where $T_{E_t},T_{E_h}$ denote the Giesecker points for $E_t$ and $E_h$, respectively. The relevant linearization is
$$
\mathcal{O}_{\overline{Z}}(\epsilon) \otimes \mathcal{O}_{\PP_t}(\sigma_1) \otimes \mathcal{O}_{\PP_t}(\sigma_2)
$$
for $\sigma_i$ positive real constants (used to define the vortex equations). The analogues of Theorem \ref{thm:giesekerbalanced} and Theorem \ref{thm:converge} follow verbatim from this construction.  Moreover by extending in the obvious way one gets the same results for an abitrary quiver bundle, assuming still all the twistings are globally generated.  Furthermore it should be possible to extend all of this to the theory of parabolic (and irregular) twisted Higgs bundles that also have physical relevance (we thank M.\ Winjholt for this observation).  \medskip

We note in passing that the approach we have taken in this paper (both in the case for a twisted Higgs bundle and for a more general quiver representation) is close in spirit to that of  L. \'Alvarez-C\'onsul--King, who pioneered the use of certain ``Kronecker modules'' to construct moduli spaces of sheaves \cite[p.111]{AC2} and \cite{ACK}.


As mentioned in the introduction, a related notion of balanced metric was introduced by J. Keller in \cite{Keller}, for suitable quiver sheaves arising from dimensional reduction. The approach in \cite{Keller} is different from ours, as the balanced metrics are considered on filtered vector bundles and related a posteriori with metrics on quiver sheaves using the dimensional reduction arguments in \cite{ACGP2}. As pointed out in \cite{ACGP} this does not allow twisting in the endomorphism and thus does not apply to twisted Higgs bundles. For the quiver \eqref{eq:quiver}, a link with \cite{Keller} is provided by the assumptions $M = \mathcal{O}_X$ and $\phi\colon E_t \to E_h$ surjective. In this situation, the filtered bundle is simply $E_t$ with the flag
$$
0 \subset \operatorname{Ker} \phi \subset E_t
$$
and the target space for the balanced construction in \cite{Keller} is given by
$$
\mathbb \mathbb G (\CC^{N_t},\rk_{E_t}) \times \mathbb G (\CC^{N_t},\rk_{E_t} - \rk_{E_h}).
$$
Considering the extended commutative diagram \eqref{eq:quiverdiag} induced by $\phi$, one can easily construct a (partially defined) morphism from the target space in our construction $\overline{Z}\times \mathbb G_t \times \mathbb G_h$ to the target space in \cite{Keller}, which sends
$$
(\phi_*,e_t,e_h) \to (e_t,e_h \circ \phi_*).
$$
This morphism is equivariant, for the homomorphism $SL(N_t) \times SL(N_h) \to SL(N_t) $ given by projection in the first factor, but does not preserve the symplectic structures used for the balanced condition.  Thus the two approaches have different moment maps, and thus are qualitatively different.


\appendix
\section{Weakly Geometric Metrics}

We include here the justification for the terminology used for weakly geometric metrics.



\begin{proposition}\label{prop:geometricimpliesweaklygeometricsharp}
Fix a hermitian metric $H$ on $E$ and let let $\|\cdot\|_k$ be the geometric metric on $H^0(E(k))$ induced by $H_k:=H\otimes h_L^k$.    Then if $\phi\colon M\otimes E\to E$ is non-zero then $\|\cdot\|_k$ is weakly geometric with respect to $\phi$.
Moreover the constant $c'$ can be chosen uniformly as $H$ varies in a bounded set of metrics on $E$.
\end{proposition}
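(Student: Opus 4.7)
The plan is to verify the two defining inequalities of the weakly geometric condition separately, both by exploiting the factorization
$$\phi_* \;=\; \bar\phi \circ \mu,$$
where $\mu\colon H^0(M) \otimes H^0(E(k)) \to H^0(M \otimes E(k))$ is the natural multiplication map and $\bar\phi\colon H^0(M\otimes E(k)) \to H^0(E(k))$ is the pushforward of the bundle morphism $\phi$. The upper bound on $\|\phi_*\|$ will be elementary once one controls each factor, whereas the lower bound on $\vertiii{\phi_*}^2$ will require the leading term of the Bergman kernel asymptotic \eqref{eq:Bergmanexpansion}.

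For the upper bound, I would first show that $\|\mu\|$ is uniformly bounded in $k$. Writing any $\xi \in H^0(M) \otimes H^0(E(k))$ as $\sum_j t_j \otimes \sigma_j$ with $\{t_j\}$ an $L^2$-orthonormal basis of $H^0(M)$, a pointwise Cauchy--Schwarz estimate gives
$$|\mu(\xi)(x)|^2_{H_M \otimes H_k} \;\le\; B_M(x) \sum_j |\sigma_j(x)|^2_{H_k},$$
where $B_M(x) = \sum_j |t_j(x)|^2_{H_M}$ is the Bergman function of $H^0(M)$, which is fixed and in particular independent of $k$. Integrating yields $\|\mu\| \le \|B_M\|^{1/2}_\infty$. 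The second factor $\bar\phi$ is pointwise induced by the bundle map $\phi$, so its operator norm is bounded by the pointwise operator norm $\|\phi\|_{\infty,H}$ of $\phi$ taken with respect to $H_M$ and $H$. As $H$ varies in a bounded set this quantity remains controlled, giving $\|\phi_*\| \le \|B_M\|^{1/2}_\infty \|\phi\|_{\infty,H}$, uniformly in $k$ and in $H$.

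For the lower bound, I would compute $\vertiii{\phi_*}^2 = \sum_{j,l}\|\phi_*(t_j \otimes s_l)\|^2_{L^2}$ using orthonormal bases $\{t_j\}$ of $H^0(M)$ and $\{s_l\}$ of $H^0(E(k))$, then evaluate pointwise in local orthonormal frames of $M$ and $E(k)$. Performing the sum over $j,l$ groups the $s_l$ terms into the Bergman kernel endomorphism $\mathcal B_k(x)$ of $H^0(E(k))$ at $x$, and the $t_j$ terms into the (fixed) Bergman kernel endomorphism $\mathcal B_M(x)$ of $H^0(M)$. The leading-order asymptotic $\mathcal B_k(x) = k^n \Id + O(k^{n-1})$ from \eqref{eq:Bergmanexpansion} then gives
$$\vertiii{\phi_*}^2 \;=\; k^n \int_X \Phi_H(x) \, \frac{\omega^n}{n!} \;+\; O(k^{n-1}),$$
where $\Phi_H(x)$ is the pointwise contraction of $\phi_x \otimes \overline{\phi_x}$ against $\mathcal B_M(x)$ (using the $H_k$-inner product on the $E(k)$ factor). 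Because $M$ is globally generated, $\mathcal B_M(x)$ is positive definite at every $x$, so $\Phi_H$ is a non-negative continuous function on $X$; moreover $\Phi_H(x_0) > 0$ at any point $x_0$ where $\phi_{x_0} \neq 0$, which exists by hypothesis. Thus the leading coefficient is strictly positive, giving $\vertiii{\phi_*}^2 \ge c \, k^n$ for some $c>0$ and all sufficiently large $k$.

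The main obstacle is then the uniformity statement: one needs the leading coefficient and the remainder in the Bergman expansion to be controlled uniformly as $H$ ranges over a bounded set of metrics. The constant in the upper bound is manifestly continuous in $H$, and the leading coefficient of the lower bound is continuous and strictly positive in $H$, so both are uniformly controlled over compact families. For the remainder term, the principal technical ingredient is a version of the Tian--Catlin--Zelditch expansion that is uniform for metrics varying in a $C^\infty$-bounded set, which is standard (see e.g. the discussion in \cite{MM1}). Once this is invoked, the asymptotic inequality $\vertiii{\phi_*}^2 \ge (c'/\rk_E) k^n$ together with the uniform upper bound on $\|\phi_*\|$ give the weakly geometric property with a constant $c'>0$ that is stable under the allowed variation of $H$.
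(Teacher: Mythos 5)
Your proof is correct, and it splits naturally into a part that coincides with the paper's argument and a part that takes a genuinely different (and arguably cleaner) route. The upper bound on $\|\phi_*\|$ is exactly the paper's: the factorization through the multiplication map and the pushforward, with the multiplication map controlled by a pointwise Cauchy--Schwarz against the fixed Bergman function of $H^0(M)$, is precisely Lemma \ref{lem:boundmultiplicationmap}. For the lower bound on $\vertiii{\phi_*}^2$ both arguments rest on the same two ingredients --- the leading term $k^n\Id$ of the Bergman kernel of $E(k)$ and the global generation of $M$ --- but deploy them differently. The paper applies the reproducing identity to a single vector $\zeta\in E_z$, tensors with $t_\beta(z)$ for an orthonormal basis $t_\beta$ of $H^0(M)$, applies $\phi$ and runs Cauchy--Schwarz to obtain a pointwise inequality which it then integrates; this yields only a one-sided bound, with constant built from the operators $\phi_\beta(\cdot)=\phi(\cdot\otimes t_\beta)$, and global generation enters merely to guarantee that some $\phi_\beta$ is nonzero. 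Your version instead computes $\vertiii{\phi_*}^2$ exactly as the integral over $X$ of the trace of $\phi_x^*\phi_x$ against $\mathcal{B}_M(x)\otimes\mathcal{B}_k(x)$ and substitutes the expansion of $\mathcal{B}_k$, producing the precise leading coefficient $\int_X\Phi_H\,\omega^n/n!$ rather than an estimate; global generation enters as positive-definiteness of $\mathcal{B}_M(x)$, which forces $\Phi_H>0$ wherever $\phi\neq 0$. This buys an asymptotic identity for the Frobenius norm, not just the inequality needed. The uniformity in $H$ is treated the same way in both proofs (continuity of the leading data in $H$ plus a Bergman expansion with remainder uniform over bounded families of metrics, which the paper also asserts without further detail); the only small correction is that you should phrase this over a \emph{bounded} set of metrics rather than a compact family, which is harmless since for $H$ uniformly equivalent to a reference metric $H_0$ the function $\Phi_H$ is comparable to $\Phi_{H_0}$.
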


\begin{lemma}\label{lem:boundmultiplicationmap}
Let $M'$ and $E$ be hermitian vector bundles and
$$ m\colon H^0(E(k)) \otimes H^0(M')\to H^0(M'\otimes E(k))$$
be the natural multlipication map.  Then there exists a constant $C$ independent of $k$ such that
$$\|m\|^2\le C$$
where all the vector spaces are endowed with the induced $L^2$-metric.  In fact one can take 
$$ C= h^0(M') \sup \{ |t(x)|^2_{\infty} : t\in H^0(M'), \|t\|=1\}.$$
\end{lemma}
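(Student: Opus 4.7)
The plan is to diagonalize the estimate by expanding in an $L^2$-orthonormal basis of $H^0(M')$, so that the operator norm becomes a pointwise Cauchy–Schwarz bound on $X$.

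First I fix an $L^2$-orthonormal basis $\{t_j\}$ of $H^0(M')$. Any element of $H^0(E(k)) \otimes H^0(M')$ may be written uniquely as $\sum_j s_j\otimes t_j$ with $s_j\in H^0(E(k))$, and by orthonormality its squared $L^2$-norm is $\sum_j \|s_j\|^2_{L^2}$. Next, I compute
\[
\|m(\textstyle\sum_j s_j\otimes t_j)\|_{L^2}^2 \;=\; \int_X \bigl| \textstyle\sum_j s_j(x)\otimes t_j(x)\bigr|^2_{h_{M'\otimes E(k)}}\, \frac{\omega^n}{n!}.
\]
Applying Cauchy–Schwarz pointwise in the hermitian space $(M'\otimes E(k))_x$ gives
\[
\bigl|\textstyle\sum_j s_j(x)\otimes t_j(x)\bigr|^2 \;\le\; \Bigl(\textstyle\sum_j |s_j(x)|_{H_k}^2\Bigr)\Bigl(\textstyle\sum_j |t_j(x)|_{h_{M'}}^2\Bigr),
\]
and the integrand now factors.

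The second factor $B_{M'}(x):=\sum_j |t_j(x)|^2_{h_{M'}}$ is the Bergman function of $M'$, which is independent of the chosen orthonormal basis. Since each $t_j$ is a unit $L^2$-section, its pointwise norm at $x$ is at most $\sup_y|t_j(y)|_{h_{M'}}^2 \le \sup\{|t|_\infty^2 : t\in H^0(M'),\ \|t\|=1\}$, and summing over the $h^0(M')$ basis elements yields the uniform bound $\sup_x B_{M'}(x) \le h^0(M')\sup\{|t|_\infty^2 : \|t\|=1\}$. Pulling this constant out of the integral and recognizing $\int_X \sum_j |s_j(x)|^2_{H_k}\, \omega^n/n! = \sum_j \|s_j\|_{L^2}^2$, I obtain
\[
\|m(\textstyle\sum_j s_j\otimes t_j)\|_{L^2}^2 \;\le\; C \sum_j \|s_j\|_{L^2}^2 \;=\; C\,\|\textstyle\sum_j s_j\otimes t_j\|^2
\]
with the advertised constant, giving $\|m\|^2\le C$.

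There is no real obstacle here; the only point requiring a little care is the pointwise tensor-norm Cauchy–Schwarz and the basis-independence of $B_{M'}$, but both are immediate. Note that the constant $C$ visibly depends only on $M'$ (through $h^0(M')$ and the sup norms of unit $L^2$-sections), and in particular is independent of $k$, as required.
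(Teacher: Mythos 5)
Your proof is correct and follows essentially the same route as the paper's: expand along an $L^2$-orthonormal basis of $H^0(M')$, exploit the multiplicativity of the pointwise tensor norm together with a Cauchy--Schwarz inequality over the basis index, and bound the resulting factor $\sum_j|t_j(x)|^2$ by $h^0(M')\sup\{|t|_\infty^2:\|t\|=1\}$. The only cosmetic difference is that you perform the Cauchy--Schwarz step pointwise before integrating (strictly speaking it is the triangle inequality in the fibre followed by Cauchy--Schwarz on the finite sum), whereas the paper first bounds each $\|m(v_\beta)\|$ and then applies Cauchy--Schwarz to the sum of $L^2$-norms; both yield the same constant.
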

\begin{proof}
Let $s_{\alpha}$ be an orthonormal basis for $H^0(E(k))$ and $t_{\beta}$ an orthonormal basis for $H^0(M')$.  Any $v\in H^0(E(k))\otimes H^0(M')$ can be written as $v = \sum_{\beta} v_{\beta}$ where $v_{\beta}= \sum_{\alpha} a_{\alpha \beta} s_{\alpha}\otimes t_{\beta}$ for some coefficients $a_{\alpha\beta}$.  So $\|v\|^2 =\sum_{\beta} \|v_{\beta}\|^2$ and $\|v_{\beta}\|^2 = \| \sum_{\alpha} a_{\alpha\beta} s_{\alpha}\|^2$.

Now let $C' = \sup_{\beta}\{ \|t_{\beta}\|^2_{\infty}\}$.  Then
$$ |m(v_{\beta}(z))|^2 = |t_{\beta}(z)|^2 |\sum_{\alpha} a_{\alpha \beta} s_{\beta}(z)|^2$$
and so
$$ \|m(v_{\beta})\| \le C' \int_X |\sum_{\alpha} a_{\alpha \beta} s_{\beta}(z)|^2 \frac{\omega^n}{n!} = C' \|v_{\beta}\|^2.$$
Hence using Cauchy-Schwarz,
$$ \|m(v)\|^2 \le h^0(M') \sum_{\beta} \|m(v_{\beta})\|^2 \le C \sum_{\beta} \|v_{\beta}\|^2 = C \|v\|$$
as claimed.  
\end{proof}

\begin{proof}[Proof of Proposition \ref{prop:geometricimpliesweaklygeometricsharp}]
We shall show that
\begin{equation}
\frac{c'}{\rk_E}k^n \le \vertiii{\phi_{*}}^2\label{eq:weakgeomrepeat}
\|\phi_{*}\|\le c'
\end{equation}

We first deal with the operator norm.  The map $\phi_*$ is the composition of the multiplication map $H^0(M)\otimes H^0(E(k)) \to H^0(M\otimes E(k)$ and the pushforward $H^0(M\otimes E(k))\to H^0(E(k))$.  The norm of this multiplication map is bounded independent of $k$ by Lemma \ref{lem:boundmultiplicationmap} applied with $M'=M$.  The norm of the pushforward is clearly bounded, as $\phi$ is continuus and the vector spaces are endowed with their $L^2$-metrics.  Thus we have $\|\phi_*\| = O(k^0)$ as claimed.

Turning to the first equation in \eqref{eq:weakgeom} recall that the leading order asymptotic of the Bergman kernel is given by
$$ B_k = \sum_{\alpha} s_{\alpha} \otimes s_{\alpha}^{*,H_k} = k^n\Id + O(k^{n-1})$$
where $\{s_{\alpha}\}$ is an orthonormal basis for $H^0(E(k))$.  Here $B_k$ is considered as an smooth section of $\End(E)$ and the error term can be taken in the supremum norm determined by $H$, and is uniform as $H$ varies over a bounded set.  We recall that in this expression the term $s_{\alpha}\otimes s_{\alpha}^{*,H_k}$ denotes taking the fibrewise dual, so should be considered as an element in $\End(E\otimes L^k) \simeq \End(E)$, and under this identification 
$$s_{\alpha}\otimes s_{\alpha}^{*,H_k}(\zeta) = s_{\alpha}\tau^{-1} (s_{\alpha},\zeta\otimes \tau)_{H_k} \text{ for } \zeta\in E_z, 0\neq \tau\in L_z^k.$$
In particular taking the trace this implies $\sum_{\alpha}|s_{\alpha}(z)|_{H_k}^2 = \rk_E k^n + O(k^{n-1})$.  So applying the endomorphism $B_k$ to some non-zero $\zeta\in E_x$ gives
$$ k^n\zeta + O(k^{n-1}) = \sum_{\alpha} s_{\alpha}(z) \tau^{-1} (s_{\alpha}(z),\zeta\otimes \tau)_{H_k}.$$
Now tensoring with some $\eta\in M_z$, applying $\phi$ and taking the norm-squared gives
\begin{align*}
  k^{2n}|\phi(\zeta\otimes \eta)|_H^2 &= \big\vert \sum_{\alpha} \phi(s_{\alpha}(z) \eta \tau^{-1}) (s_{\alpha}(z),\zeta\otimes \tau)_{H_k} + O(k^{n-1})\big\vert_H^2\\
&= \big\vert \sum_{\alpha} \phi(s_{\alpha}(z)\eta) \tau^{-1} (s_{\alpha}(z),\zeta\otimes \tau)_{H_k} + O(k^{n-1})\big\vert_H^2\\
&\le \sum_{\alpha} |\phi(s_{\alpha}(z)\eta)|_{H_k}^2 \sum_{\alpha} |s_{\alpha}(z)|_{H_k}^2 |\zeta|_{H}^2 + O(k^{2n-1})|\eta|_{H_M}^2|\zeta|_H^2
\end{align*}
where the last inequality uses Cauchy-Schwarz for the sum, and then again for the inner product $(s_{\alpha}(z),\zeta_z\otimes \tau)_{H_k}.$   Thus
\begin{equation}
 k^{2n} \frac{|\phi(\zeta\otimes \eta)|_H^2}{|\zeta|_H^2} \le \rk_E k^n \sum_{\alpha} |\phi_{*}(s_{\alpha}(z)\eta)|_{H_k}^2 + O(k^{2n-1})|\eta|_{H_M}^2.\label{eq:lowerboundfrobenius}
 \end{equation}
Now fix an orthonormal basis $t_{\beta}$ for $H^0(M)$.  For each $\beta$ let $\phi_{\beta}\colon E\to E$ be
$$\phi_{\beta}(\zeta)  = \phi(\zeta \otimes t_{\beta}(z))\text{ for } \zeta\in E_z$$
Observe that since $M$ is globally generated, and $\phi\neq 0$, there is at least one $\beta$ for which $\phi_{\beta}$ is non-zero.
We let $c'_{\beta}$ be the $L^2$-metric of $\phi_{\beta}$, i.e.
$$ c'_{\beta}:= \|\phi_{\beta}\|_H^2 := \int_X \|\phi_\beta|_z\|_H^2 \frac{\omega^n}{n!}$$
where $\|\phi_\beta|_z\|_H$ is the operator norm of $\phi_{\beta}|_z\colon E_z\to E_z$, and set
$$ c':= \sum_{\beta} c'_{\beta}>0.$$

Now if $t_{\beta}(z)\neq 0$ then substituting $\eta= t_{\beta}(z)$ into \eqref{eq:lowerboundfrobenius} gives
$$ \frac{k^{n}}{\rk_E}  \frac{|\phi_{\beta}(\zeta)|_{H}^2}{|\zeta|_{H}^2}\le \sum_{\alpha} |\phi_{*}(s_{\alpha}\otimes t_{\beta}(z))|_{H_k}^2 + O(k^{n-1})|t_{\beta}(z)|_{H_M}^2$$
and taking the supremum over all non-zero $\zeta\in E_z$ gives
$$ \frac{k^{n}}{ \rk_E}  \|\phi_{\beta}|_z\|_{H}^2 \le \sum_{\alpha} |\phi_{*}(s_{\alpha}\otimes t_{\beta}(z))|_{H_k}^2 + O(k^{n-1})|t_{\beta}(z)|_{H_M}^2.$$
Moreover this inequality clearly holds if $t_{\beta}(z)=0$.  Thus taking the sum over all $\beta$ and then integrating over $X$ gives
$$ \frac{c' k^{n}}{\rk_E}  \le \sum_{\alpha,\beta} \|\phi_*(s_{\alpha}\otimes t_{\beta})\|_{H_k}^2 + O(k^{n-1}) = \vertiii{\phi_*} + O(k^{n-1})$$
which gives \eqref{eq:weakgeomrepeat}.
\end{proof}

\end{document}